\newtheorem{theorem}{Theorem}[section]
\newtheorem{lemma}[theorem]{Lemma}
\newtheorem{corollary}[theorem]{Corollary}
\newtheorem{proposition}[theorem]{Proposition}
\theoremstyle{definition}
\newtheorem{definition}[theorem]{Definition}
\newtheorem{example}[theorem]{Example}
\newtheorem{question}[theorem]{Open Problem}
\newcommand{\Cset}{\mathbb{C}}
\newcommand{\Rset}{\mathbb{R}}
\newcommand{\so}{\mathbf{s}}
\newcommand{\ra}{\mathbf{r}}
\begin{document}

\title{Canonical traces and directly finite Leavitt path
algebras}

\author{Lia Va\v s}
\address{Lia Va\v s\\ Department of Mathematics, Physics and Statistics\\
University of the Sciences\\
Phila\-delphia, PA 19104, USA}
\email{l.vas@usciences.edu}

\subjclass[2000]{
16S99, 
16W99, 
16W10, 
16P99}  

\keywords{Trace, Leavitt path algebra, directly finite, involution, graph trace,
gauge invariant, positive, faithful, Cohn-Leavitt algebra}

\begin{abstract} 
Motivated by the study of traces on graph $C^*$-algebras, we consider traces
(additive, central maps) on Leavitt path algebras, the algebraic counterparts of
graph $C^*$-algebras. In particular, we consider traces which vanish on nonzero
graded components of a Leavitt path algebra and refer to them as {\em
canonical} since they are uniquely determined by their values on
the vertices. 

A desirable property of a $\Cset$-valued trace on a $C^*$-algebra is that the
trace of an element of the positive cone is nonnegative. We adapt this property
to traces on a Leavitt path algebra $L_K(E)$ with values in any involutive ring.
We refer to traces with this property as positive. If a positive trace is
injective on positive elements, we say that it is faithful. We characterize when
a canonical, $K$-linear trace is positive and when it is faithful in terms of
its values on the vertices. As a consequence, we obtain a bijective
correspondence between the set of faithful, gauge invariant, $\Cset$-valued
(algebra) traces on $L_{\Cset}(E)$ of a countable graph $E$ and the set of
faithful, semifinite, lower semicontinuous, gauge invariant (operator theory)
traces on the corresponding graph $C^*$-algebra $C^*(E)$. 

With the direct finite condition (i.e $xy=1$ implies $yx=1$) for unital rings
adapted to rings with local units, we characterize directly finite Leavitt path
algebras as exactly those having the underlying graphs in which no cycle has an
exit. Our proof involves consideration of ``local'' Cohn-Leavitt subalgebras of
finite subgraphs. Lastly,
we show that, while related, the class of locally noetherian, the class of
directly finite, and the class of Leavitt path algebras which admit a faithful
trace are different in general. 
\end{abstract}

\maketitle

\section{Introduction}

Throughout their existence, many operator theory concepts have been
subject to ``algebraization'' -- the study of algebraic counterparts of
operator theory concepts using algebraic methods alone. Regular rings, Baer
rings, and their numerous generalizations have all been created by
algebraization of some operator theory concepts. Recently, Leavitt path
algebras have joined this list as algebraic counterparts of graph
$C^*$-algebras and many properties of graph $C^*$-algebras
have been formulated for Leavitt path algebras and proven using solely algebraic
methods. Our interest in traces is greatly inspired by their relevance in
the study of noncommutative geometry of graph $C^*$-algebras from
\cite{Pask-Rennie}. 

The class of all traces, i.e. additive and central maps, on a Leavitt
path algebra is a rather large class. After some preliminaries, in section
\ref{section_graph_traces}, we restrict our attention to those traces that  
vanish on nonzero graded components of a Leavitt path algebra and refer to
them as {\em canonical traces} (Definition
\ref{definition_canonical}). Such
traces are canonical in the sense that they are completely determined by
the values on the vertices. In particular, we show that every {\em graph trace}
(a map on the vertices of the underlying graph which agrees with the (CK2) axiom
of Leavitt path algebras) uniquely extends to a canonical trace (Proposition
\ref{canonical}). A canonical trace is gauge invariant and, if the
characteristic of the underlying field is zero, the converse is true as well
(Proposition \ref{gauge_invariant}). 

In operator theory, a desirable property of a $\Cset$-valued trace on
a $C^*$-algebra is that the trace of an element of the positive cone is 
nonnegative. The algebraic version of this property for a trace $t:
R\to T$, where $R$ and $T$ are involutive rings, is that the trace of a positive
element of $R$ (a finite sum of elements of the form $xx^*$) is a positive
element of $T$. If a trace has this property we say it is {\em positive}. A
positive trace is {\em faithful} if the trace of a nonzero, positive element is
nonzero and positive.

Given a graph $E$ and a field
$K,$ consider the Leavitt path algebra $L_K(E)$ of $E$ over $K.$
\cite[Proposition 29]{Zak_paper} lists some necessary conditions, given in terms
of trace values on the vertices of $E$, for a trace on $L_K(E)$ to be positive
and faithful. In \cite{Zak_paper}, it is shown that these conditions are not
sufficient. In section \ref{section_positive_faithful}, we prove that conditions
(1)--(3) of \cite[Proposition 29]{Zak_paper} are sufficient for a canonical,
$K$-linear trace on $L_K(E)$ to be positive (Theorem \ref{positive}). If a
canonical, $K$-linear trace on $L_K(E)$ has values in a positive definite
algebra, conditions (1)--(4) are sufficient for this trace to be faithful
(Theorem \ref{faithful}). Theorems \ref{positive} and \ref{faithful} imply that
any positive graph trace on $E$ uniquely
extends to a positive, canonical, $K$-linear trace on $L_K(E)$ and, any
faithful graph trace on $E$ with values in a positive definite algebra uniquely
extends to a faithful, canonical, $K$-linear trace on $L_K(E)$ (Theorem
\ref{correpondence}). 

Let $\Cset$ denote the field of complex numbers with the
complex-conjugate involution. \cite[Proposition 3.9]{Pask-Rennie} shows that
there is a bijective correspondence between the set of faithful, $\Cset$-valued
graph traces on a countable, row-finite graph $E$ and the set of faithful,
semifinite, lower 
semicontinuous, gauge invariant, $\Cset$-valued traces (in the operator theory
sense) on the graph $C^*$-algebra $C^*(E)$. We show that the two sets above
are also in a bijective correspondence with the set of faithful, gauge
invariant, $\Cset$-linear, $\Cset$-valued traces on the Leavitt path algebra
$L_{\Cset}(E)$ and that it is not necessary to require that $E$ is row-finite
(Corollary \ref{c-star}). 

The main goal of the second part of the paper (section \ref{section_finite}) is
to characterize directly finite Leavitt path algebras by properties of the
underlying graph. Recall that a unital ring is directly finite if $xy=1$ implies
that $yx=1$ for all $x$ and $y$. We say that a ring with local units is directly
finite if for every $x, y$ and an idempotent $u$ such that $xu=ux=x$ and
$yu=uy=y,$ we have that $xy=u$ implies $yx=u.$ Inspired by results of Ara and
Goodearl in \cite{Ara_Goodearl}, we note that consideration of finitely many
elements of a Leavitt path algebra can be reduced to their consideration as
elements of a Cohn-Leavitt subalgebra of a finite subgraph. We show
that the Cohn-Leavitt path algebra of a finite graph $E$ is directly finite if
and only if no cycle of $E$ has an exit and the (CK2) axiom holds for all
vertices of the cycles. Using this result, we show that a Leavitt path algebra
of a graph $E$ is directly finite if and only if no cycle of $E$ has an exit
(Theorem \ref{no-exit}). 

Cohn-Leavitt algebras encompass both Cohn path algebras and Leavitt path
algebras and can be viewed as algebraic counterparts of relative graph
$C^*$-algebras. We adapt our previous results, Theorems \ref{positive},
\ref{faithful}, and \ref{correpondence} to Cohn-Leavitt algebras (Propositions
\ref{CL_positive_faithful} and \ref{CL_correspondence}), and use our
characterization of directly finite Leavitt path algebras to show that a Cohn
path algebra $C_K(E)$ is directly finite if and only if $E$ is acyclic
(Corollary \ref{CL_no-exit}). We also note that the properties that $L_K(E)$ is
locally noetherian and that $L_K(E)$ admits a faithful trace are independent,
both imply that $L_K(E)$ is directly finite and that both implications
are strict (Examples \ref{direct_finite_not_noetherian_no_trace},
\ref{noetherian_no_trace}, and \ref{has_trace_not_noetherian}). We conclude the
paper by considering an open problem (\ref{question1}).

\section{Positive, faithful, and canonical traces on Leavitt path algebras}
\label{section_graph_traces}

Throughout the paper, all rings are assumed to be associative, but not
necessarily unital. The notation $\delta_{a,b}$ is used to denote 1 if $a=b$
and 0 if $a\neq b$ for any set $A$ and
$a,b$ in $A.$
We start by recalling a few general definitions and establishing some
preliminary results. 

Let $R$ and $T$ be rings. A map $t: R \to T$ is   {\em central} 
if  $t(xy)=t(yx)$ for all $x,y \in R$ and it is a $T$-\emph{valued trace on}
$R$ if $t$ is an additive, central map. If $R$ and $T$ are $C$-algebras, for
some
commutative ring $C$, then the trace $t$ is
$C$-\emph{linear} if $t(cx)=ct(x)$ for all $x \in R$ and $c \in C$.

The standard trace on a matrix ring over a commutative
ring $C$ is an example of a  $C$-linear, $C$-valued trace. If $G$ is a group,
the Kaplansky trace and the augmentation map on the
group ring $CG$ are also examples of $C$-linear, $C$-valued traces. 

Recall that an {\em involution} on a ring $R$ is an additive map $* : R\to R$
such that $(xy)^*=y^*x^*$ and $(x^*)^*  = x$ for all $x,y\in R.$ In this case
$R$ is called an {\em involutive ring} or a {\em $*$-ring}. If $R$ is also a
$C$-algebra for some commutative, involutive ring $C,$ then $R$ is a {\em
$*$-algebra} if $(ax)^*=a^*x^*$ for $a\in C$ and $x\in R.$

An element of a $*$-ring
$R$ is \emph{positive} if it is a finite 
sum of elements of the form $xx^*$ for $x \in R.$ The notation $x>0$ usually
denotes positive elements. We abuse this notation slightly and denote the
fact that $x$ is positive element by $x\geq 0.$ If $x$ is positive and
nonzero, we write $x>0.$ One may argue that we should refer to positive
elements as nonnegative instead. Although this may be a valid point, we
continue to use the terminology which is well established in operator
theory and keep referring to such elements as positive. 

An involution $*$ on $R$ is \emph{positive definite} if, for all $x_1, \dots,
x_n
\in R$, $\sum_{i=1}^n x_ix_i^* = 0$ implies $x_i=0$ for each $i=1,\ldots,
n$ and it is {\em proper} if this condition holds for $n=1.$ 
A $\ast$-ring with a positive definite (proper) involution is referred to
as {\em positive definite (proper)}. By \cite[Exercise 9A, sec. 13]{Berberian},
a $*$-ring $R$ is positive definite if and only $R$ is proper and the conditions
\[x\geq 0,\; y\geq 0,\mbox{ and }x+y=0\mbox{ imply }x=y=0.\]

The relation $\geq$ extends to all elements of a $*$-ring $R$ by 
\[x\geq y\mbox{ if and only if }x-y\geq 0.\]
This relation is always reflexive and transitive (see \cite[Section
50]{Berberian}). The antisymmetry holds if $R$ is positive
definite. 

Let $R$ and $T$ be $*$-rings and $t: R\to T$ be an additive map. 
\begin{enumerate}
\item[] The map $t$ is \emph{positive} if $t(x)\geq 0$ for all $x \in R$ with $x\geq 0$ 
(equivalently $t(xx^*)\geq 0$ for all $x\in R$). 

\item[] The map $t$ is \emph{faithful}
if  $t(x) > 0$ for all $x \in R$ with $x > 0$ (equivalently $t$ is positive
and $x\geq 0$ and $t(x)=0$ imply $x=0$).
\end{enumerate}

The following lemma further characterizes faithful, additive maps with values
in a positive definite $*$-ring. 

\begin{lemma} Let $R$ and $T$ be $*$-rings, $T$ be positive definite, and 
$t: R\to T$ be any positive, additive map.
The following are equivalent. 
\begin{enumerate}
\item  The map $t$ is faithful. 
\item $x\geq 0, y\geq 0$ and $t(x+y) = 0$ imply $x = y = 0$ for all 
$x, y\in  R.$
\end{enumerate}
Conditions {\em (3)} and {\em(4)} below imply {\em(1)} and {\em(2)}. 
If $R$ is proper, then {\em(1)} and {\em(2)} are equivalent to {\em (3)}
and {\em(4)}. 
\begin{enumerate}
\item[(3)] $t(xx^*+yy^*)=0$ implies $x=y=0$ for all $x,y\in R.$
\item[(4)] $t(xx^*)=0$ implies $x=0$ for all $x\in R.$
\end{enumerate}
\label{positively_faithful}
\end{lemma}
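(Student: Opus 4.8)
The plan is to establish the equivalence $(1)\Leftrightarrow(2)$ first, then the implication $(3)\Rightarrow(1)$ — so that the conjunction of $(3)$ and $(4)$ (indeed $(3)$ alone) gives $(1)$ and $(2)$ — and finally, under the extra hypothesis that $R$ is proper, the implications $(1)\Rightarrow(4)\Rightarrow(3)$, which close the loop. For $(1)\Rightarrow(2)$, suppose $t$ is faithful and $x,y\geq 0$ satisfy $t(x+y)=0$. Additivity gives $t(x)+t(y)=0$, positivity of $t$ gives $t(x)\geq 0$ and $t(y)\geq 0$, and the positive definiteness of $T$ — in the equivalent form recalled above, that $a\geq 0$, $b\geq 0$, $a+b=0$ force $a=b=0$ — yields $t(x)=t(y)=0$; faithfulness of $t$ then gives $x=y=0$. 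For $(2)\Rightarrow(1)$, given $x\geq 0$ with $t(x)=0$, I would simply apply $(2)$ with $y=0$.

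The core of the argument is the step $(3)\Rightarrow(1)$. By definition a positive element of $R$ is a finite sum $\sum_{i=1}^n a_ia_i^*$, so $t$ being faithful is the same as the assertion: for every finite family $a_1,\dots,a_n\in R$, $t\big(\sum_{i=1}^n a_ia_i^*\big)=0$ implies $a_i=0$ for all $i$. The plan is to prove this by induction on $n$. The case $n=1$ is exactly $(4)$, which is the special case $y=0$ of $(3)$. For the inductive step, write $\sum_{i=1}^{n+1}a_ia_i^* = u+v$ with $u=\sum_{i=1}^n a_ia_i^*$ and $v=a_{n+1}a_{n+1}^*$; then $u,v\geq 0$, so $t(u)\geq 0$ and $t(v)\geq 0$, and since $t(u)+t(v)=0$, positive definiteness of $T$ gives $t(u)=t(v)=0$. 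Applying $(4)$ to $t(v)=0$ gives $a_{n+1}=0$, and applying the induction hypothesis to $t(u)=0$ gives $a_1=\dots=a_n=0$. Hence $t$ is faithful, and by the previous paragraph $(2)$ holds as well. I expect this peeling step to be the only real obstacle: conditions $(3)$ and $(4)$ speak only about one- and two-term sums, whereas positive elements are arbitrary finite sums of the form $xx^*$, and the induction is what bridges that gap — crucially using positive definiteness of the \emph{target} ring $T$, not of $R$.

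Finally, I would treat the partial converse under the assumption that $R$ is proper. For $(1)\Rightarrow(4)$: if $t(xx^*)=0$ then $xx^*\geq 0$ and faithfulness of $t$ force $xx^*=0$, whence properness of $R$ gives $x=0$. For $(4)\Rightarrow(3)$ one repeats the two-term peeling argument: from $t(xx^*+yy^*)=0$ together with $t(xx^*)\geq 0$ and $t(yy^*)\geq 0$, positive definiteness of $T$ gives $t(xx^*)=t(yy^*)=0$, so $(4)$ yields $x=y=0$. Combined with the implications already obtained, this makes $(1)$, $(2)$, $(3)$, $(4)$ all equivalent in the proper case, and it is precisely because the sums are split inside $T$ rather than inside $R$ that properness of $R$ (the $n=1$ cancellation) — rather than positive definiteness of $R$ — is all that is required.
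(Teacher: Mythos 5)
Your proposal is correct and follows essentially the same route as the paper: positive definiteness of $T$ is used to split $t$ of a sum of positive elements into vanishing summands (the paper does the $n$-term split in one step where you make the induction explicit), and properness of $R$ enters only to pass from $xx^*=0$ to $x=0$ (the paper closes the loop via $(2)\Rightarrow(3)$ where you use $(1)\Rightarrow(4)\Rightarrow(3)$, but the content is the same). One minor imprecision: your claim that faithfulness ``is the same as'' the assertion that $t(\sum a_ia_i^*)=0$ forces every $a_i=0$ is an overstatement (that assertion is a priori stronger unless $R$ is positive definite), but since you only use the direction that the stronger assertion implies faithfulness, the argument is unaffected.
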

\begin{proof}
To prove that (1) implies (2), let $x\geq 0$, $y\geq 0$ and $t(x+y)=0$
for $x,y \in R.$ Since $t$ is positive $t(x)\geq 0$ and $t(y)\geq 0.$
Since $T$ is positive definite $t(x)+t(y)=t(x+y)=0$ implies that $t(x)=t(y)=0.$ 
Then $x=y=0$ by faithfulness of $t$. 

Condition (2) with $y=0$ implies condition (1) and condition (3) with $y=0$
implies condition (4). 

Condition (4) implies (1). Indeed, if $x\geq 0$ and $t(x)=0$,
then $x=\sum_{i=1}^n a_ia_i^*$ for some $a_i\in R$ and $0=t(\sum_{i=1}^n
a_ia_i^*)=\sum_{i=1}^n t(a_ia_i^*).$ Since $T$ is positive definite and $t$ is
positive, this
implies that $t(a_ia_i^*)=0$ for all $i=1,\ldots,n.$ Then $a_i=0$ for all
$i=1,\ldots,n$ by condition (4). Thus $x=0.$ 

Assuming now that $R$ is proper and that condition (2) holds, let us show
(3). 
If $t(xx^*+yy^*)=0$, then condition (2) implies that $xx^*=0$ and $yy^*=0.$
Then $x=y=0$ by properness of $R$.
\end{proof}
 
Since any $C^*$-algebra is proper (\cite[page 11]{Berberian}) and $\Cset$ is
positive definite when equipped with the complex-conjugate involution, a
$\Cset$-valued, additive, and positive map on a $C^*$-algebra can be defined to
be faithful using any of the conditions (1)--(4).
In fact, in operator theory texts, either condition (3) or condition
(4) are frequently used when defining a faithful trace. 

We review the definition of a Leavitt path algebra now. 
Let $E=(E^0, E^1, \so_E, \ra_E)$ be a directed graph where $E^0$ is
the set of vertices, $E^1$ the set of edges, and $\so_E,\ra_E:E^1
\to E^0$ are the source and the range maps. Since we consider just directed
graphs, we refer to them simply as
graphs. Also, if it is clear from the context, we write $\so_E$ and $\ra_E$ shorter
as $\so$ and $\ra.$ A {\em path} $p$ in $E$ is a finite sequence of edges
$p=e_1\ldots e_n$ such that $\ra(e_i)=\so(e_{i+1})$ for $i=1,\dots,n-1$. Such
path $p$ has length $n$ and we write $|p|=n.$  The maps $\so$ and $\ra$ extend
to paths by $\so(p)=\so(e_1)$ and $\ra(p)=\ra(e_n)$. We consider vertices as
paths of length zero. A path $p = e_1\ldots e_n$ is said to be \emph{closed} if
$\so(p)=\ra(p)$. A closed path is said to be a \emph{cycle} if $\so(e_i)\neq
\so(e_j)$ for every $i\neq j$. A graph $E$ is said to be {\em no-exit} if
$\so^{-1}(v)$ has just one element for every vertex $v$ of every cycle. 

A vertex $v$ is said to be {\em regular} if the set $\so^{-1}(v)$ is nonempty
and finite, $v$ is called a {\em sink} if $\so^{-1}(v)$ is empty, and $v$ is
called an {\em infinite emitter} if $\so^{-1}(v)$ is infinite. A graph $E$ is
\emph{row-finite} if sinks are the only vertices that are not regular,
\emph{finite} if it is row-finite and $E^0$
is finite (in which case $E^1$ is necessarily finite as well), and {\em
countable} if both $E^0$ and $E^1$ are countable. 

For a graph $E,$ consider the extended graph of $E$ to be the graph with the
same vertices and with edges $E^1\cup \{e^*\ |\ e\in E^1\}$ where
the range and source relations are the same as in $E$ for $e\in
E^1$ and $\so(e^*)=\ra(e)$  and $\ra(e^*)=\so(e)$ for the added edges.
Extend the map $^*$ to all the paths by defining $v^*=v$ for
all vertices $v$ and $(e_1\ldots e_n)^*=e_n^*\ldots e_1^*$ for all paths
$p=e_1\ldots e_n.$ If $p$ is a path, we refer to elements of the form
$p^*$ as ghost paths. Extend also the maps $\so$ and $\ra$ to ghost paths by
$\so(p^*)=\ra(p)$ and $\ra(p^*)=\so(p)$.  

In the rest of the paper, $E$ denotes a graph and $K$ a field. 
The \emph{Cohn path algebra $C_K(E)$ of $E$ over
$K$} is the free $K$-algebra generated by $E^0\cup E^1\cup
\{e^*\ |\ e\in
E^1\}$ 
subject to the following relations for all vertices $v,w$ and edges $e,f$.
\begin{itemize}
\item[(V)]  $vw = \delta_{v,w}v$,

\item[(E1)]  $\so(e)e=e\ra(e)=e$,

\item[(E2)] $\ra(e)e^*=e^*\so(e)=e^*$,

\item[(CK1)] $e^*f=\delta _{e,f}\ra(e)$.
\end{itemize}
The four axioms above imply that every
element of $C_K(E)$ can be represented as a sum of the form $\sum_{i=1}^n
a_ip_iq_i^*$ for some $n$, paths $p_i$ and $q_i$, and elements $a_i\in K,$ for 
$i=1,\ldots,n.$ We use $G_E$ to denote the set of all elements of the form
$pq^*$ where $p$ and $q$ are paths with $\ra(p)=\ra(q).$ 

If the underlying field $K$ has an involution $*$ (and there is always at
least one such involution, the identity), the involution $*$ from
$K$ extends to an involution of $C_K(E)$ by 
$(\sum_{i=1}^n a_ip_iq_i^*)^* =\sum_{i=1}^n a_i^*q_ip_i^*$ making $C_K(E)$ a
$*$-algebra.

The \emph{Leavitt path algebra $L_K(E)$ of $E$ over
$K$} is the free $K$-algebra generated by $E^0\cup E^1\cup \{e^*\ |\ e\in
E^1\}$ 
subject to relations (V), (E1), (E2), (CK1) and 
\begin{itemize}
\item[(CK2)] $v=\sum_{e\in \so^{-1}(v)} ee^*$ for every regular vertex $v$.
\end{itemize}

The Leavitt path algebra $L_K(E)$ can also be defined as the quotient
$C_K(E)/N$ where  $N$ is the ideal of the Cohn algebra $C_K(E)$ generated by all
elements of the form $v~-~\sum_{e\in \so^{-1}(v)} ee^*$ where $v$ is a regular
vertex. The algebra $L_K(E)$ is an involutive algebra with the involution
inherited from $C_K(E).$
In some early works on Leavitt path algebras, the field $K$ was assumed to have 
the identity involution in all cases except when $K=\Cset$ in
which case the involution was assumed to be the
complex-conjugate involution. We stress the advantage of considering the base
field $K$ as an involutive field with {\em any} involution and defining the
involution on $L_K(E)$ by $$(\sum_{i=1}^n a_ip_iq_i^*)^* =\sum_{i=1}^n
a_i^*q_ip_i^*$$ for paths $p_i,q_i$ and $a_i\in K,$ $i=1,\ldots, n.$ This
approach unifies the treatment of different involutive fields and
integrates
consideration of both the identity and the complex-conjugate involution on
$\Cset.$ 

The underlying field $K$ does not play a role when characterizing many
algebraic properties of Leavitt path algebras as shown in numerous papers. In
fact, it has been hypothesized that two Leavitt
path algebras isomorphic over one field are isomorphic over
any other field. While this issue is still not settled, we
point out that the presence of an involution definitely brings the underlying
field into focus and makes properties of the field $K$
relevant for the ``involution sensitive'' ring-theoretic properties of
$L_K(E)$.
This is apparent in \cite[Theorem 3.3]{GonzaloRangaLia}
for example.  We point out
that Theorems \ref{faithful} and \ref{correpondence}
have the same sensitivity to involution on $K$. Namely, the assumptions of
Theorems \ref{faithful} and \ref{correpondence} imply
that the field $K$ is positive definite as the next proposition shows. In this
case, \cite[Proposition 2.4]{GonzaloRangaLia} shows that $L_K(E)$ is
positive definite for some (equivalently any) graph $E$.

\begin{proposition}
Let $R$ be an involutive $K$-algebra and $t:L_K(E)\to R$ a $K$-linear
map on a Leavitt
path algebra $L_K(E)$. If $R$ is positive definite and $t$ is faithful, then $K$
is positive definite thus $L_K(E)$ is positive definite as well.
\label{positively_faithful_on_LPA} 
\end{proposition}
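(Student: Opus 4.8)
The plan is to reduce positive-definiteness of $K$ to a one-line computation inside $L_K(E)$ performed at a single vertex, and then transport the resulting vanishing back to $K$ through the faithful map $t$, using that $R$ is positive definite. Assume $E$ is nonempty (so that $L_K(E)\neq 0$; note the statement fails for the empty graph since then $L_K(E)=0$ and every map out of it is faithful), and fix a vertex $v\in E^0$. Recall that $v$ is a nonzero idempotent of $L_K(E)$ with $v^*=v$. Given $a_1,\dots,a_n\in K$ with $\sum_{i=1}^n a_ia_i^*=0$, set $x_i:=a_iv\in L_K(E)$. Then, using $(V)$ and $v^*=v$, one computes $x_ix_i^* = a_ia_i^*\,v$, hence
\[
\sum_{i=1}^n x_ix_i^* \;=\; \left(\sum_{i=1}^n a_ia_i^*\right)v \;=\; 0 .
\]

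Next I would apply $t$. Since $t$ is additive, $\sum_{i=1}^n t(x_ix_i^*)=t(0)=0$ in $R$, and since $t$ is positive, each $t(x_ix_i^*)$ is a positive element of $R$. Because $R$ is positive definite, a finite sum of positive elements that vanishes forces every summand to vanish (this is the $n$-term extension, by an obvious induction, of the condition characterizing positive-definiteness recalled just before Lemma \ref{positively_faithful}, and also follows from Lemma \ref{positively_faithful}(2)); hence $t(x_ix_i^*)=0$ for every $i$. Now faithfulness of $t$ applies to the positive element $x_ix_i^*\geq 0$: from $t(x_ix_i^*)=0$ we conclude $x_ix_i^*=0$, that is, $a_ia_i^*\,v=0$ in $L_K(E)$.

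Finally, since $a_ia_i^*\in K$ and $v\neq 0$, the scalar $a_ia_i^*$ must be $0$: otherwise it would be invertible in the field $K$, and cancelling it would give $v=0$. As $K$ is a field — hence a domain with bijective involution — $a_ia_i^*=0$ forces $a_i=0$. Thus $\sum_{i=1}^n a_ia_i^*=0$ implies $a_i=0$ for all $i$, i.e. $K$ is positive definite, and the remaining assertion is then immediate from \cite[Proposition 2.4]{GonzaloRangaLia}. I do not anticipate a genuine obstacle here; the only points needing a little care are that the graph must be nonempty for a vertex to exist, and that passing through $t$ only yields the vanishing of the individual products $x_ix_i^*$, so the argument must be finished with the field-cancellation step rather than a direct appeal to positive-definiteness of $L_K(E)$ (which is, after all, part of the conclusion).
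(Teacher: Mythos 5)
Your argument is correct and is essentially the paper's own proof: both fix a vertex $v$, pass from $\sum a_ia_i^*=0$ to $\sum (a_iv)(a_iv)^*=0$, use positivity of $t$ together with positive definiteness of $R$ to kill each $t((a_iv)(a_iv)^*)$, invoke faithfulness to get $a_ia_i^*v=0$, and finish by $v\neq 0$ and properness of the field $K$. The only cosmetic difference is that you rederive the needed implication directly instead of citing condition (2) of Lemma \ref{positively_faithful}, and you make explicit the (harmless) nonemptiness caveat.
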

\begin{proof}
If  $R$ is positive definite and $t$ is faithful, then $t$ satisfies
condition (2) of Lemma \ref{positively_faithful} by that Lemma.
To show that $K$ is positive definite, note first that $K$ is
proper since $K$ is a field:
if $a\neq 0$ for $a\in K$ then
$a^*\neq 0$ and so $aa^*\neq 0$ as well. Now let us assume
that 
$\sum_{i=1}^n a_ia_i^*=0$ for some $a_1,\dots, a_n \in K$.
Then for any vertex $v$ we
have that $0=\sum_{i=1}^n a_ia_i^*v=\sum_{i=1}^n (a_iv)(a_iv)^*$ and so 
$t(\sum_{i=1}^n (a_iv)(a_iv)^*)=0.$ Since condition (2) of Lemma
\ref{positively_faithful} holds, this implies that
$0=(a_iv)(a_iv)^*=a_ia_i^*v$ for each $i$. Assuming
that $a_ia_i^*\neq 0$ we would have $v=0$ which is a contradiction. Thus 
$a_ia_i^*=0$ and so $a_i=0$ since $K$ is proper. Thus $K$ is positive definite.
In this case, $L_K(E)$ is positive definite as well by \cite[Proposition
2.4]{GonzaloRangaLia}.
\end{proof}

If a trace $t:L_K(E)\to R$ is positive for some $*$-ring $R,$ 
then the trace values of vertices are positive elements of $R$. Let
us denote this condition by (P0).
\begin{enumerate}
\item[(P0)] $t(v)\geq 0$ for all vertices $v.$
\end{enumerate}
\cite[Proposition 29]{Zak_paper}
lists more necessary conditions for a trace $t$ to be positive:
\begin{enumerate}
\item[(P1)] $t(v) \geq t(w)$ for all vertices $v$ and $w$, such that there
is a path $p$ with  $\so(p) =v$ and $\ra(p) =w$.
\item[(P2)] $t(v) \geq \sum_{i=1}^n t(\ra(e_i))$ for all vertices $v$
and distinct edges $e_1, \dots, e_n$ with $v$ as the source.
\end{enumerate}

We show that (P2) implies (P1) and we combine conditions
(P0) and (P2) into a single condition.  
\begin{lemma} If $R$ is a $*$-ring and $t$ is an $R$-valued trace on
$L_K(E)$, conditions (P0)
and (P2) are equivalent to condition (P) below and imply (P1). 
\begin{enumerate}
\item[(P)] $\;\;\;t(v-\sum_{e\in I} \ra(e))\geq 0\;\;\;$ for all vertices $v$ and finite subsets $I$ of $\so^{-1}(v)$. 
\end{enumerate}
\label{condition_P} 
\end{lemma}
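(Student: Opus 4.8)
The plan is to treat the two assertions separately, proving the equivalence of the conditions (P0) and (P2) taken together with (P) first, and then deducing (P1); the latter will in fact use only (P2).

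For the equivalence I would simply specialize the finite set $I\subseteq\so^{-1}(v)$ appearing in (P). Taking $I=\emptyset$ turns (P) into $t(v)\geq 0$, which is (P0), while taking $I=\{e_1,\dots,e_n\}$ to be an arbitrary finite set of distinct edges with source $v$ and using additivity of $t$ to write $t(v-\sum_{e\in I}\ra(e))=t(v)-\sum_{i=1}^n t(\ra(e_i))$ turns (P) into (P2). Conversely, assume (P0) and (P2), and let $v$ be a vertex with a finite subset $I\subseteq\so^{-1}(v)$: if $I=\emptyset$ then $t(v-\sum_{e\in I}\ra(e))=t(v)\geq 0$ is exactly (P0); if $I=\{e_1,\dots,e_n\}$ is nonempty then the $e_i$ are distinct edges emitted by $v$, so additivity of $t$ and (P2) give $t(v-\sum_{e\in I}\ra(e))=t(v)-\sum_{i=1}^n t(\ra(e_i))\geq 0$. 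Hence (P) holds. This part is purely formal; the only things to keep track of are that the empty sum is $0$ and that $0\geq 0$.

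For (P1), suppose there is a path $p$ with $\so(p)=v$ and $\ra(p)=w$, and argue by induction on the length $|p|$, using only (P2) together with the reflexivity and transitivity of $\geq$ on $R$ recalled above. If $|p|=0$ then $v=w$ and $t(v)\geq t(w)$ is reflexivity. If $p=e_1e_2\cdots e_n$ with $n\geq 1$, set $u=\ra(e_1)=\so(e_2\cdots e_n)$; then (P2) applied to the single edge $e_1$ gives $t(v)\geq t(u)$, the inductive hypothesis applied to the path $e_2\cdots e_n$ from $u$ to $w$ gives $t(u)\geq t(w)$, and transitivity gives $t(v)\geq t(w)$.

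I do not expect a genuine obstacle: the statement is a bookkeeping consequence of the definition of the order relation and of the additivity of $t$. The two points deserving a moment's care are that $0$ is a positive element, so that reflexivity of $\geq$ is available for the base case of the induction, and that transitivity of $\geq$ holds in every $*$-ring, not merely a positive definite one, which is precisely what lets the one-edge estimates chain along the path; antisymmetry of $\geq$, which would require positive definiteness of $R$, is never needed.
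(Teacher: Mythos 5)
Your proposal is correct and follows essentially the same route as the paper: the equivalence of (P0)$\wedge$(P2) with (P) is obtained by specializing $I$ to the empty set versus a nonempty finite set and using additivity of $t$, and (P2)$\Rightarrow$(P1) is proved by induction on the length of the path, chaining one-edge inequalities via transitivity of $\geq$. Your extra remarks on reflexivity (via $0\geq 0$) and on transitivity holding in any $*$-ring are accurate and consistent with the paper's appeal to \cite[Section 50]{Berberian}.
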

\begin{proof}
Condition (P) is condition (P0) in case when the set $I$ is
empty. If $I$ is nonempty, conditions (P2) and (P) are equivalent since 
$t(v-\sum_{e\in I} \ra(e))=t(v)-\sum_{e\in I}t(\ra(e)).$

Let us show that (P2) implies (P1). Let $v, w$ and $p$
be as in (P1). We prove the claim by induction on the length of $p.$ If
$|p|=0,$ $v=w$ and (P1) clearly holds. Assuming (P1) for
paths of length $n,$ let us prove (P1) for a path $p=ep_1$ where
$\so(e)=v, \ra(e)=\so(p_1),$ $\ra(p_1)=w,$ and $|p_1|=n.$ Indeed, 
$t(v)\geq t(\ra(e))$ by (P2) and $t(\ra(e))\geq t(w)$ by the induction
hypothesis. Thus, $t(v)\geq t(w).$  
\end{proof}

Condition (F) below is clearly necessary for a trace $t:L_K(E)\to R$ to be
faithful for some $*$-ring $R.$ 
\begin{enumerate}
\item[(F)] $t(v)>0$ for all vertices $v.$ 
\end{enumerate}

Neither (P) is sufficient for positivity nor (P) and (F) are sufficient
for faithfulness of a trace on a Leavitt path algebra as it
was observed in
\cite[Example 30]{Zak_paper}. In this example, $\Cset[x,x^{-1}]$ was considered 
as the Leavitt path algebra of the single-vertex single-edge graph over $\Cset$.
With the complex-conjugate involution on $\Cset,$ the trace defined by
$t(x^n)=i^n$ for $n\geq 0$ and $t(x^n)=i^{-n}$ for $n<0$ is such that (P) and
(F) hold. However, by considering the trace of the positive element
$(1+x)(1+x^{-1})$ one can see that $t$ is not positive (thus also not 
faithful). 

The fact that (P) is not sufficient for positivity and (P) and (F) are
not sufficient for faithfulness of a trace should not be surprising since traces
are rather general classes of maps. We show that this drawback
is not present for a certain class of well-behaved traces. We refer to such
traces as {\em canonical traces}. This terminology will be justified in
Proposition \ref{canonical}. We define a canonical trace using the following
proposition.

\begin{proposition} Any map $t$ on $G_E=\{pq^*\,|\, p,q$ paths with
$\ra(p)=\ra(q)\}$ such that
$$t(pq^*)=\delta_{p,q}t(\ra(p))$$ uniquely extends to a $K$-linear trace on
$L_K(E).$

If $t$ is a trace on $L_K(E)$, the following conditions
are equivalent. 
\begin{enumerate}
\item $t(pq^*)=\delta_{p,q}t(\ra(p))$ for all paths $p$ and $q.$ 

\item $t(pq^*)=0$ for all paths $p$ and $q$ of
non-equal length.  
\end{enumerate} 
Conditions (1) and (2) imply condition (3) below. If $K$ has
characteristic zero, then the
conditions (1) and (2) are equivalent to (3).   
\begin{itemize}
\item[(3)] $t(pq^*)=k^{|p|-|q|}t(pq^*)$ for any nonzero $k\in K$.  
\end{itemize}
\label{gauge_invariant}
\end{proposition}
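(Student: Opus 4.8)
The plan is to treat the three assertions separately. \emph{Uniqueness} in the first assertion is immediate: every element of $L_K(E)$ is a $K$-linear combination of monomials $p_iq_i^*$, and such a monomial is either zero or an element of $G_E$ (since $pq^*=p\,\ra(p)\,\ra(q)\,q^*$ vanishes unless $\ra(p)=\ra(q)$), so $G_E$ spans $L_K(E)$ over $K$ and any $K$-linear map is determined by its values on $G_E$. For \emph{existence} I would build the extension through the Cohn path algebra: it is a standard fact (a normal-form argument) that $\{\,pq^*\mid p,q\text{ paths},\ \ra(p)=\ra(q)\,\}$ is a $K$-basis of $C_K(E)$, so the prescription $\bar t_0(pq^*)=\delta_{p,q}t(\ra(p))$ defines a $K$-linear map $\bar t_0\colon C_K(E)\to R$. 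It then remains to verify that $\bar t_0$ is central and that it annihilates the ideal $N$ with $L_K(E)=C_K(E)/N$; granting these, $\bar t_0$ descends to the required $K$-linear trace $\bar t$ on $L_K(E)$, and $\bar t$ restricts to $t$ on $G_E\subseteq L_K(E)$ because $\bar t_0$ does and kills $N$.

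For centrality, bilinearity reduces the task to proving $\bar t_0\big((pq^*)(rs^*)\big)=\bar t_0\big((rs^*)(pq^*)\big)$, and here one uses the elementary multiplication rule in $C_K(E)$: $q^*r$ is $0$ unless one of $q,r$ is an initial subpath of the other, in which case $(pq^*)(rs^*)$ equals $pr's^*$ when $r=qr'$ and $p(sq')^*$ when $q=rq'$. Since $\bar t_0(pq^*)=0$ whenever $p\neq q$, both sides vanish unless the product is a monomial $uu^*$, and a short (if branching) case analysis on how the four paths overlap shows that $(pq^*)(rs^*)$ has this form exactly when $(rs^*)(pq^*)$ does and that the two resulting monomials then share a range vertex; this branching verification is the one step I expect to be genuinely laborious. (The same point can be packaged via the degree-zero subalgebra of the natural $\mathbb Z$-grading, but the direct check seems simplest.) For the ideal $N$, which is generated by the elements $v-\sum_{e\in\so^{-1}(v)}ee^*$ over regular vertices $v$, centrality of $\bar t_0$ reduces the task to showing $\bar t_0\big((v-\sum_{e}ee^*)\,c\big)=0$ for all $c\in C_K(E)$; taking $c=pq^*$ and computing term by term, every case is trivial except $p=v$, where the value comes out to $\delta_{v,q}\big(t(v)-\sum_{e\in\so^{-1}(v)}t(\ra(e))\big)$. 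This vanishes precisely because a well-defined map $t$ on $G_E$ of the stated form must satisfy $t(v)=\sum_{e\in\so^{-1}(v)}t(\ra(e))$ for every regular $v$; equivalently, this compatibility with (CK2) is exactly the condition under which the extension exists.

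For the equivalence of (1) and (2): (1) implies (2) because paths of unequal length are distinct. For (2) implies (1), both sides are $0$ when $|p|\neq|q|$, and when $|p|=|q|$ I would induct on this common length. If $p=q$, iterating (CK1) gives $p^*p=\ra(p)$, so centrality yields $t(pp^*)=t(p^*p)=t(\ra(p))$. If $p\neq q$ have equal length, write $p=e_1p'$ and $q=f_1q'$; by centrality $t(pq^*)=t(q^*p)$, and (CK1) collapses $q^*p$ either to $0$ (when $e_1\neq f_1$) or to $(q')^*p'$ (when $e_1=f_1$, which forces $p'\neq q'$ with $|p'|=|q'|=|p|-1$), so in the latter case centrality together with the inductive hypothesis gives $t(q^*p)=t\big((q')^*p'\big)=t\big(p'(q')^*\big)=0$; the base case is the relation $vw^*=\delta_{v,w}v$.

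Finally, (1) and (2) imply (3) at once, since $k^{|p|-|q|}=1$ when $|p|=|q|$ while $t(pq^*)=0$ when $|p|\neq|q|$. For the converse when $K$ has characteristic zero, assume (3) and take $|p|\neq|q|$; applying (3) with $k=2\in\mathbb Q\subseteq K$ gives $\big(1-2^{|p|-|q|}\big)\,t(pq^*)=0$, and $1-2^{|p|-|q|}$ is a nonzero scalar of $K$, so $t(pq^*)=0$ because nonzero scalars act invertibly on the $K$-module $R$. Hence (2), and therefore (1), holds. Characteristic zero is essential here: over $\mathbb F_\ell$ every nonzero $k$ satisfies $k^{\ell-1}=1$, so (3) carries no information about monomials $pq^*$ with $|p|-|q|$ divisible by $\ell-1$.
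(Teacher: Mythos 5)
Your handling of the equivalence of (1) and (2) and of condition (3) is essentially the paper's own argument: the same induction on the common length $|p|=|q|$, peeling off the first edges with centrality and (CK1), and the same root-of-unity argument in characteristic zero (the paper chooses $k$ not a $(|p|-|q|)$-th root of unity where you simply take $k=2$; both work). For the first assertion you take a genuinely different route: the paper defines $t$ on $G_E$, invokes the centrality criterion of \cite[Proposition 19]{Zak_paper}, and extends linearly, whereas you lift to the Cohn algebra $C_K(E)$, where $\{pq^*\mid \ra(p)=\ra(q)\}$ really is a basis, verify centrality by the monomial computation (your case analysis does close up: $(pq^*)(rs^*)$ and $(rs^*)(pq^*)$ are of the form $uu^*$ in exactly matching overlap configurations, with common value $t$ of the range of the overlap path), and then descend modulo the ideal $N$. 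This is more self-contained, and it makes visible exactly where (CK2) enters.

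That, however, is also where the gap is. You assert that $\bar t_0$ annihilates $N$ because ``a well-defined map $t$ on $G_E$ of the stated form must satisfy $t(v)=\sum_{e\in\so^{-1}(v)}t(\ra(e))$.'' This is false: when a regular vertex $v$ emits two or more edges, $v$ and the elements $ee^*$ are pairwise \emph{distinct} elements of $G_E$ linked only by a linear relation, not by an equality, so a set map on $G_E$ satisfying $t(pq^*)=\delta_{p,q}t(\ra(p))$ is free to violate the sum condition. The paper's own example immediately following the proposition (the graph $u\leftarrow v\rightarrow w$ with $t(u)=t(w)=1$ and $t(v)=3$) is exactly such a map, and it does not extend. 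So the first assertion is only true under the additional hypothesis that the restriction of $t$ to the vertices is a graph trace (which is how the result is actually used, via Proposition \ref{canonical}); your descent argument correctly isolates this as the precise condition, but you then declare it automatic instead of adding it as a hypothesis. To be fair, the paper's own proof has the same soft spot --- it passes from ``$t$ is central on $G_E$ and $G_E$ spans $L_K(E)$'' to ``$t$ extends $K$-linearly'' without addressing the linear relations among elements of $G_E$ --- so this is as much a defect of the statement as of your argument; but the fix is to state the needed compatibility, not to claim it follows.
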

\begin{proof}
To show the first sentence, consider \cite[Proposition
19]{Zak_paper} proving that any map $\delta$ on $G_E\cup\{0\}$ which
preserves zero is
central if and only if the following three conditions hold:
\begin{enumerate}
\item[$(i)$] If $\delta (x) \neq 0$ for some $x \in G_E$, then either $x=pqp^*$
or $x=pq^*p^*$ for some path $p$ and some closed path $q$.
\item[$(ii)$] $\delta(pqp^*)=\delta(q)$ and $\delta(pq^*p^*)=\delta(q^*)$ for
any path $p$ and any closed path $q$.
\item[$(iii)$] $\delta (p) = \delta (q)$ and $\delta (p^*) = \delta (q^*)$ 
for any two closed paths $p$ and $q$ such that $xy=p$ and $yx=q$ for some paths
$x$ and $y.$  
\end{enumerate}
It is easy to check that the map $t$ as in the first sentence of the
proposition satisfies these three conditions. Thus 
$t$ is a central map on $G_E.$ Since every element of $L_K(E)$ is a $K$-linear
combination of elements from $G_E,$ the map $t$ extends to a $K$-linear trace
of $L_K(E).$ This extension is unique since if two $K$-linear maps agree on
$G_E$ then they are equal on $L_K(E).$ 

If $t$ is a trace on $L_K(E),$ let us show the equivalence of conditions (1) and
(2). (1) clearly implies (2).
Since $t(pp^*)=t(p^*p)=t(\ra(p)),$ to show the
converse it is sufficient to show that $t(pq^*)\neq 0$ and $|p|=|q|$ imply  
 $p=q.$ Let us use induction on the length $|p|=|q|$.
If $p$ and $q$ are vertices, the claim clearly holds by axiom (V). 

Assume that the claim holds for paths $p$ and $q$ with  $|p|=|q|=n$ and let us
prove the claim for paths $p=e p_1$ and $q=f q_1$ where $e$ and $f$ are edges
and $p_1$ and $q_1$ paths with $|p_1|=|q_1|=n$, $\ra(e)=\so(p_1)$ and
$\ra(f)=\so(q_1).$ Then 
$0\neq t(pq^*)=t(ep_1q_1^*f^*)=t(f^*ep_1q_1^*)$ implies that $f^*ep_1q_1^*\neq
0$ and so $f^*e\neq 0$ thus $e=f$. Then we can use the induction hypothesis
for $t(p_1q_1^*)=t(e^*ep_1q_1^*)=t(f^*ep_1q_1^*)=t(ep_1q_1^*f^*)\neq 0$ to
obtain that $p_1=q_1.$ Thus $p=ep_1=eq_1=q.$  

(1) implies (3). Indeed, if $k\in
K$ is nonzero and $p\neq q,$ then 
$t(pq^*)=z^{|p|-|q|}t(pq^*)$ trivially holds since both sides are zero by (1).
If $p=q,$ then  $t(pp^*)=z^{|p|-|p|}t(pp^*)$ also holds. 

Now let us assume that (3) holds and that char$K=0$ and let us show (2). Assume
that $|p|\neq |q|.$ Then
\[t(pq^*)=k^{|p|-|q|}t(pq^*)\mbox{ implies }
(1-k^{|p|-|q|})t(pq^*)=0\mbox{ for every }0\neq k\in K.\]
Since char$K=0,$ we can find a nonzero element $k$ in $K$ that is
not a $(|p|-|q|)$-th root of the identity in $K$ in case $|p|>|q|.$ If 
$|q|>|p|,$ consider $k^{-1}$ for $k$ that is not a $(|q|-|p|)$-th root of the
identity. In both cases, $1-k^{|p|-|q|}\neq 0$ and so $t(pq^*)= 0$.
\end{proof}

Condition (3) from Proposition \ref{gauge_invariant} is the algebraic
version
of the definition of a gauge invariant trace on a graph $C^*$-algebra. Namely,
the gauge action on a graph
$C^*$-algebra given as in \cite[Definition 2.13]{Abrams_Tomforde} generalizes
to Leavitt path algebras as follows. 

The {\em gauge action} on $L_K(E)$ is a group
homomorphism $\lambda : K\setminus\{0\} \to $ Aut $(L_K(E))$ such that
for any
$0\neq k\in
K,$ $\lambda(k)(v) =v$
for all vertices $v$,  $\lambda(k)(e)=ke $ and  $\lambda(k)(e^*)=\frac{1}{k}e^*
$ for all edges $e.$ 
It is easy to see that in this case  
\[\lambda(k)(pq^*)=k^{|p|-|q|}pq^*\]
for any paths $p$ and $q$. This fact and Proposition \ref{gauge_invariant}
motivate the following definition. 

\begin{definition}
If $t$ is a trace on $L_K(E)$ and $p$ and $q$ paths, then 
\begin{enumerate}
\item $t$ is {\em gauge invariant} if
\[t(pq^*)=k^{|p|-|q|}t(pq^*)\;\;\mbox{ for any nonzero }k\in K.\]  

\item $t$ is {\em canonical} if
\[t(pq^*)=\delta_{|p|,|q|}t(pq^*)=\delta_{p,q}t(pq^*)=\delta_{p,q}t(\ra(p)).\]  
\end{enumerate}
\label{definition_canonical}
\end{definition}

The equalities in (2) follow from Proposition \ref{gauge_invariant}.
Also by Proposition
\ref{gauge_invariant}, every canonical trace is gauge invariant and 
the converse is also true if $K$ has characteristic zero.  

We justify the use of the term canonical by Proposition \ref{canonical}
which shows that every canonical trace is uniquely determined by its value
on vertices. Note that some maps on vertices cannot be extended to   
traces because their values may be such that axiom (CK2) is
violated. For example,    
consider the graph $E$ below and the $\Cset$-valued map which maps $u$ and $w$
to 1 and $v$
to 3.
$$\xymatrix{\bullet^u & \ar[l]_e\bullet^v\ar[r]^f& \bullet^w} $$
This map satisfies conditions (P) and (F) but it cannot be extended to a
$\Cset$-valued trace $t$ on the Leavitt path algebra $L_{\Cset}(E)$ since
$3=t(v)=t(ee^*+ff^*)=t(e^*e)+t(f^*f)=t(u)+t(w)=2$ in that case.

This example illustrates that the values on vertices have to agree with (CK2).
This fact was noticed by Tomforde in
\cite{Tomforde_arxiv} and later also utilized in  \cite{Johnson} and
\cite{Pask-Rennie}. Tomforde considers maps $\delta$ on vertices with
values in
$(0,\infty)$ which satisfy the following
two conditions and calls them graph traces on $E.$
\begin{enumerate}
\item[$(1)$] For all regular vertices $v$ we have $\delta (v) = \sum
_{e\in
\so^{-1}(v)} \delta (\ra(e))$.
\item[$(2)$] For all infinite emitters $v$ and every $e_1,\ldots,e_n\in
\so^{-1}(v),$ 
$\delta (v) \geq \sum_{i=1}^n \delta (\ra(e))$.
\end{enumerate}
Since condition (2)
follows from condition (P), we define a graph trace by 
condition (1) only. We
also allow the values of a graph trace to be in any involutive $K$-algebra $R$
not necessarily real interval $(0, \infty).$ 

\begin{definition}
If $R$ is a ring, a map $\delta: E^0\to R$ is a
{\em graph trace on $E$} if  
\begin{enumerate}
\item[(CK2)$_\delta$] $\;\;\delta (v) = \sum_{e\in \so^{-1}(v)} \delta
(\ra(e))\;\;$ for all regular vertices $v$.
\end{enumerate} 

If $R$ is a $*$-ring, the graph trace $\delta$ is {\em
positive}
if 
\begin{enumerate}
\item[(P)$_\delta$] $\;\;\delta(v)\geq \sum_{e\in I}
\delta(\ra(e))\;\;$ for all vertices $v$ and  finite subsets $I$ of $\so^{-1}(v)$ 
\end{enumerate} 
where $\sum_{e\in \emptyset}
\delta(\ra(e))$ is defined to be $0$. 

A positive graph trace $\delta$ is {\em faithful} if 
\begin{enumerate}
\item[(F)$_\delta$] $\;\;\delta(v)>0$ for all vertices $v.$ 
\end{enumerate}
\end{definition}

\begin{proposition}
Let $R$ be a $K$-algebra. There is a bijective correspondence $\tau$
between 
\begin{enumerate}
\item $R$-valued graph traces on a graph $E$ and

\item canonical, $K$-linear, $R$-valued traces on $L_K(E).$
\end{enumerate}
\label{canonical}
\end{proposition}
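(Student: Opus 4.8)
The plan is to construct the two maps comprising the bijection $\tau$ and its inverse, and then check they are mutually inverse. First I would define $\tau$: given an $R$-valued graph trace $\delta$ on $E$, I claim that the assignment $pq^* \mapsto \delta_{p,q}\,\delta(\ra(p))$ on the set $G_E$ extends uniquely to a $K$-linear trace on $L_K(E)$. The existence and uniqueness of the $K$-linear trace extending \emph{any} map of this specific form $pq^*\mapsto \delta_{p,q}t(\ra(p))$ is precisely the content of the first sentence of Proposition~\ref{gauge_invariant}, so the only thing to verify here is that the particular choice $t(\ra(p)):=\delta(\ra(p))$ is legitimate, i.e.\ that it produces a \emph{well-defined} function on $G_E$ rather than merely on formal symbols $pq^*$. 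The subtlety is that the same element of $L_K(E)$ can be written as $pq^*$ in more than one way only when (CK2) forces identifications; so one must check that if $v$ is a regular vertex and one rewrites $v=\sum_{e\in\so^{-1}(v)}ee^*$, the proposed trace values are consistent — but $t(v)=\delta(v)$ and $\sum_e t(ee^*)=\sum_e\delta(\ra(e))$, and these agree by (CK2)$_\delta$. More carefully, the cleanest way to say this is: the map $C_K(E)\to R$ defined on the Cohn algebra by the rule on $G_E$ (which \emph{is} a free-ish object so the rule is unambiguous there) factors through the ideal $N$ generated by the elements $v-\sum_{e\in\so^{-1}(v)}ee^*$, because the trace, being central and $K$-linear, kills $a(v-\sum ee^*)b$ whenever it kills $v-\sum ee^*$, and $t(v-\sum_{e}ee^*)=\delta(v)-\sum_e\delta(\ra(e))=0$. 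This is the main technical point and the step I expect to require the most care.

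Next I would define the inverse map: given a canonical, $K$-linear, $R$-valued trace $t$ on $L_K(E)$, set $\tau^{-1}(t):=\delta$ where $\delta(v):=t(v)$ for each vertex $v$. I must check $\delta$ is a graph trace, i.e.\ that (CK2)$_\delta$ holds. This is immediate: for a regular vertex $v$, axiom (CK2) gives $v=\sum_{e\in\so^{-1}(v)}ee^*$ in $L_K(E)$, so by additivity $t(v)=\sum_{e}t(ee^*)$, and since $t$ is canonical, $t(ee^*)=t(\ra(e))=\delta(\ra(e))$; hence $\delta(v)=\sum_e\delta(\ra(e))$. Note this direction uses only that $t$ is canonical (to evaluate $t(ee^*)$) and additive, not linearity beyond that.

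Finally I would verify the two composites are the identity. Starting from a graph trace $\delta$, forming $\tau(\delta)=t$ and then restricting to vertices recovers $t(v)=\delta_{v,v}\delta(\ra(v))=\delta(v)$ since $\ra(v)=v$, so $\tau^{-1}\tau=\mathrm{id}$. Conversely, starting from a canonical $K$-linear trace $t$, forming $\delta(v)=t(v)$ and then building $\tau(\delta)$ produces the $K$-linear trace sending $pq^*\mapsto \delta_{p,q}\delta(\ra(p))=\delta_{p,q}t(\ra(p))$; but $t$ itself, being canonical, satisfies $t(pq^*)=\delta_{p,q}t(\ra(p))$ by Definition~\ref{definition_canonical}(2), so $\tau(\delta)$ and $t$ agree on $G_E$, and since both are $K$-linear and $G_E$ spans $L_K(E)$, they coincide; thus $\tau\tau^{-1}=\mathrm{id}$. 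This uses the uniqueness clause of Proposition~\ref{gauge_invariant} again. The whole argument is short once the well-definedness in the first paragraph is nailed down; everything else is bookkeeping driven by the definitions of ``canonical'' and ``graph trace'' and the spanning property of $G_E$.
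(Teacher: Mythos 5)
Your overall architecture is the same as the paper's: build $t_\delta$ from $\delta$ by extending $pq^*\mapsto\delta_{p,q}\delta(\ra(p))$ from $G_E$, recover $\delta$ from a canonical trace by restricting to vertices and using (CK2) together with $t(ee^*)=t(e^*e)=t(\ra(e))$, and check that the two composites are the identity by comparing values on $G_E$. The inverse map and the verification of the composites are correct and essentially identical to the paper's. You are also right that the crux is the well-definedness of $t_\delta$ as a map on $L_K(E)$; the paper delegates exactly this point to \cite[Theorem 28]{Zak_paper} rather than proving it inline, whereas you attempt to prove it by factoring through $C_K(E)/N$, which is a legitimate strategy.

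However, the justification you give for that factoring is flawed. You claim that a central, $K$-linear map which kills $n_v=v-\sum_{e\in\so^{-1}(v)}ee^*$ must kill every $an_vb$. Centrality only gives $t(an_vb)=t(n_vba)$, and $t(n_v)=0$ does not imply $t(n_vc)=0$ for a central map: the ordinary trace on $M_2(K)$ kills $E_{12}$ but not $E_{12}E_{21}=E_{11}$. What actually has to be checked is that $t(n_v\,pq^*)=0$ for every $pq^*\in G_E$, and this is not a formal consequence of $t(n_v)=0$; it requires the explicit form of $t$. Concretely, if $\so(p)\neq v$ then $n_v\,pq^*=0$; if $\so(p)=v$ and $|p|>0$ then $n_v\,pq^*=pq^*-pq^*=0$ by (CK1); the only nontrivial case is $p=v=\ra(q)$, where $n_v\,pq^*=q^*-\sum_{e\in\so^{-1}(v)}e(qe)^*$, and one needs both that $t$ vanishes on $rs^*$ with $r\neq s$ (to handle $q\neq v$) and condition (CK2)$_\delta$ (to handle $q=v$). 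With that computation inserted in place of the false general principle, your factoring argument goes through and the rest of your proof stands.
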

\begin{proof}
Let $\delta$ be a graph trace on $E.$ Define the map $t_\delta$ on $G_E$
by $ t_\delta(pq^*)=\delta_{p,q}\delta(\ra(p))$ and let $t_\delta(0)=0$.
By \cite[Theorem 28]{Zak_paper}, $t_{\delta}$ is central and extends to a
$K$-linear, $R$-valued trace $t_\delta$ on $L_K(E)$. The trace $t_\delta$ is
canonical by construction and its restriction to vertices is $\delta$. 

Conversely, if $t$ is a canonical, $K$-linear, $R$-valued trace on $L_K(E),$
then the restriction $\delta$ of $t$ to $E^0$ is a graph trace by axioms
(CK1), (CK2)
and the fact that $t$ is central:  \[\delta(v) =
t\left(\sum_{e\in \so^{-1}(v)} ee^*\right) = \sum_{e\in \so^{-1}(v)} t(ee^*) =
\sum_{e\in \so^{-1}(v)} t(e^*e) = \sum_{e\in \so^{-1}(v)} t(\ra(e))=\sum_{e\in
\so^{-1}(v)} \delta(\ra(e))\]
for any regular vertex $v.$ Then
$t(pq^*)=\delta_{p,q}\delta(\ra(p))=t_\delta(pq^*)$ for all paths $p$ and $q$
since $t$ is canonical. Thus $t=t_\delta$ on $G_E$ and, consequently,
$t=t_\delta$ on $L_K(E).$ 
\end{proof}

\section{Characterizations of positive and faithful canonical traces}
\label{section_positive_faithful}

By Proposition \ref{canonical}, a canonical, $K$-linear trace
on $L_K(E)$ can be seen as a well-behaved representative of all traces on
$L_K(E)$ with the same values on vertices. In this section, we prove similar
characterizations of canonical, $K$-linear traces which are positive (Theorem
\ref{positive}) and which are faithful (Theorem \ref{faithful}). As a
consequence, there is a bijective correspondence between positive, canonical,
$K$-linear traces and positive graph traces and a bijective correspondence
between faithful, canonical, $K$-linear traces with values in a positive
definite algebra and faithful graph traces (Theorem \ref{correpondence}). We
start by several lemmas.

\begin{lemma}
If $t$ is a canonical trace on $L_K(E)$ and $p,q,r,s$ any paths 
then $t(pq^*rs^*)\neq 0$ implies that either 
\begin{center}
{\em Case 1.} $s=pu$ and $r=qu$, or \hskip2cm {\em Case 2.} $p=su$ and $q=ru$ 
\end{center}
for some path $u.$ In both cases $t(pq^*rs^*)=t(uu^*)=t(\ra(u)).$
\label{fix1}
\end{lemma}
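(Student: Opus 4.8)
The plan is to collapse the fourfold product $pq^*rs^*$ into a single element of $G_E$ and then read off the conclusion from the definition of a canonical trace. The key preliminary fact, obtained by iterating (CK1), is the normalization of the middle product: for paths $q,r$ one has $q^*q=\ra(q)$, and more generally $q^*r=0$ unless one of $q,r$ is an initial subpath of the other, in which case $q^*r=u$ when $r=qu$ and $q^*r=u^*$ when $q=ru$ (allowing $u$ to be a vertex, which covers $q=r$). I would also observe at the outset that $t(pq^*rs^*)\neq 0$ forces $pq^*rs^*\neq 0$, hence $pq^*\neq 0$, $rs^*\neq 0$ and $q^*r\neq 0$; the first two give $\ra(p)=\ra(q)$ and $\ra(r)=\ra(s)$, since, e.g., if $\ra(p)\neq\ra(q)$ then axioms (V), (E1), (E2) already yield $pq^*=p\,\ra(p)\,\ra(q)\,q^*=0$, and if $q^*r=0$ the product is trivially zero.

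Now I split according to the two nonzero cases for $q^*r$. If $r=qu$, then $\so(u)=\ra(q)=\ra(p)$, so $pu$ is a genuine path with $\ra(pu)=\ra(u)=\ra(r)=\ra(s)$, and $pq^*rs^*=p\,u\,s^*=(pu)s^*\in G_E$; since $t$ is canonical, $t\bigl((pu)s^*\bigr)=\delta_{pu,s}\,t(\ra(pu))$, so $t(pq^*rs^*)\neq 0$ forces $s=pu$ together with $r=qu$, which is Case 1, and then $t(pq^*rs^*)=t(\ra(u))=t(uu^*)$. Symmetrically, if $q=ru$, then $\so(u)=\ra(r)=\ra(s)$, so $su$ is a genuine path with $u^*s^*=(su)^*$ and $\ra(su)=\ra(u)=\ra(q)=\ra(p)$, whence $pq^*rs^*=p(su)^*\in G_E$; canonicity gives $t\bigl(p(su)^*\bigr)=\delta_{p,su}\,t(\ra(p))$, forcing $p=su$ together with $q=ru$, which is Case 2, and $t(pq^*rs^*)=t(\ra(p))=t(\ra(u))=t(uu^*)$. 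Here $t(uu^*)=t(\ra(u))$ because $u^*u=\ra(u)$ and $t$ is central (equivalently, directly from canonicity), and the degenerate situation $q=r$ is absorbed into Case 1 with $u$ a vertex.

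The only point demanding attention is the source/range bookkeeping when concatenating $p$ with $u$ (or $s$ with $u$) and when checking that the collapsed element actually lies in $G_E$, so that Definition \ref{definition_canonical}(2) may be applied; but any such mismatch would make the product vanish and contradict $t(pq^*rs^*)\neq 0$, so all the needed matchings hold automatically. I expect no deeper obstacle: the whole argument rests on the reduction $q^*r\in\{u,u^*,0\}$ together with a single invocation of the canonical-trace identity.
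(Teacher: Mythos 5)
Your proof is correct, and it is organized differently from the paper's. The paper first uses centrality to pass from $t(pq^*rs^*)\neq 0$ to $s^*pq^*r\neq 0$, which yields prefix relations at \emph{both} junctions (between $s$ and $p$, and between $q$ and $r$); it must then reconcile four combinations, including two ``mixed'' cases (e.g.\ $s=pu$ and $q=rv$) that require an extra argument showing $u$ and $v$ collapse to a vertex. You instead normalize only the inner product $q^*r\in\{u,u^*,0\}$ via (CK1), collapse the whole word to a single monomial $(pu)s^*$ or $p(su)^*$ in $G_E$, and then invoke the defining identity $t(ab^*)=\delta_{a,b}\,t(\ra(a))$ exactly once per case; the outer relation $s=pu$ (resp.\ $p=su$) then comes for free from the Kronecker delta rather than from a separate prefix analysis. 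This buys a two-case argument with no mixed cases and no explicit appeal to centrality, at the cost of having to verify the source/range bookkeeping that makes $pu$ (resp.\ $su$) a genuine path and the collapsed monomial an element of $G_E$ — which you do correctly, observing that any mismatch would kill the product. Both proofs rest on the same two ingredients (the (CK1) structure of $q^*r$ and the canonical-trace identity), so the difference is one of economy rather than of method.
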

\begin{proof}
If $t(pq^*rs^*)\neq 0$ then $t(s^*pq^*r)\neq 0$ so  $s^*pq^*r\neq 0.$ This
implies that either  $s=pu$ or $p=su$  and $r=qv$ or  $q=rv$ for
some paths $u$ and $v.$

If  $s=pu$  and $r=qv$ then $pq^*rs^*=pq^*qvu^*p^*=pvu^*p^*\neq 0.$ So $0\neq
t(pvu^*p^*)=t(p^*pvu^*)=t(vu^*)$ implies that $u=v$ since $t$ is canonical. If 
$p=su$ and $q=rv,$ we obtain $u=v$ similarly. 

If $s=pu$ and $q=rv,$ then $pq^*rs^*=pv^*r^*ru^*p^*=pv^*u^*p^*\neq 0$. So
$0\neq t(pv^*u^*p^*)= t(p^*pv^*u^*)=t(v^*u^*)$ implies that $v^*u^*$ is a vertex
(necessarily $\ra(q)=\ra(s)$ in this case.) Thus $s=p$ and
$q=r$. Hence this case falls under the previous case with $u=v=\ra(p)=\ra(q).$ 
We reach a similar conclusion if $p=su$ and $r=qv.$ 
  
Thus, we have that either (Case 1) $s=pu$ and $r=qu$ so $rs^*=quu^*p^*$ in
which case $t(pq^*rs^*)=t(puu^*p^*)=t(uu^*)=t(\ra(u)),$ or (Case 2)
$p=su$ and $q=ru$ so  $pq^*=suu^*r^*$ in which case
$t(pq^*rs^*)=t(suu^*s^*)=t(uu^*)=t(\ra(u)).$
\end{proof}

Let us define a partial order $\preceq$ on the set $G_E=\{ pq^*\ | \ p$
and $q$ paths with $\ra(p)=\ra(q) \}$ by
\[puu^*q^* \preceq pq^*\;\;\mbox{ for any path }u\]
in which case we say that $puu^*q^*$  is {\em reducible} to
$pq^*$. 

We say that $pq^*$ is {\em irreducible} if 
$pq^*\preceq rs^*$ implies that $p=r$ and $q=s.$ If $pq^*$ is irreducible, 
$p=ru$ and $q=su$ only for paths $u$ of length zero. 

We refer to the $G_E$ elements of the form $pu_1u_1^*q^*$ and $pu_2u_2^*q^*$ as
{\em comparable} and we write $$pu_1u_1^*q^*\sim pu_2u_2^*q^*$$ in this case.
Such two comparable elements can both be reduced to $pq^*$. The following lemma
establishes that comparability is an equivalence relation and that elements of
a finite set of comparable elements can be reduced to the same {\em irreducible}
element. 

\begin{lemma} 
\begin{enumerate}
\item Every element of $G_E$ can be reduced to a unique irreducible element.

\item Two elements of $G_E$ are comparable if and only if they can be reduced
to the same irreducible element. Such irreducible element is unique. 

\item Relation $\sim$ is an equivalence relation on $G_E$ and elements from the
same equivalence class reduce to the same irreducible element, necessarily
unique.  

\item If $t(pq^*(rs^*)^*)\neq 0$  for some canonical trace $t$ on $L_K(E)$
and some $pq^*, rs^*\in G_E,$ then $pq^*$ and $rs^*$ are comparable. 
\end{enumerate}
\label{irreducible}
\end{lemma}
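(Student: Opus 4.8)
The plan is to prove (1) first and then to deduce (2) and (3) from it and (4) from Lemma~\ref{fix1}. For (1), the key tool is the free-monoid-like cancellation property of path concatenation: if $aw=bw$ as paths then $a=b$, and among the common suffixes of two paths $p$ and $q$ there is a longest one. So, given $pq^*\in G_E$ with $\ra(p)=\ra(q)=v$, let $u$ be the longest path with $p=p_0u$ and $q=q_0u$ for some paths $p_0,q_0$; such a $u$ exists since $u=v$ (with $p_0=p$, $q_0=q$) works and lengths are bounded by $\min(|p|,|q|)$. Then $p_0q_0^*\in G_E$ because $\ra(p_0)=\so(u)=\ra(q_0)$, and $pq^*=p_0uu^*q_0^*\preceq p_0q_0^*$, so an irreducible reduction exists once we check $p_0q_0^*$ is irreducible. (The latter is part of the next step.)

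For the uniqueness in (1), first note that $\preceq$ is transitive: if $puu^*q^*\preceq pq^*$ and $pq^*\preceq rs^*$, then $p=rw$, $q=sw$ for some path $w$, and a short computation gives $puu^*q^*=r(wu)(wu)^*s^*\preceq rs^*$. Now suppose $pq^*\preceq rs^*$ with $rs^*$ irreducible, say $p=rw$, $q=sw$. Then $w$ is a common suffix of $p$ and $q$, hence a suffix of the longest common suffix $u$, say $u=u'w$; cancelling $w$ yields $r=p_0u'$ and $s=q_0u'$, so $u'$ is a common suffix of $r$ and $s$, and irreducibility forces $|u'|=0$, i.e.\ $rs^*=p_0q_0^*$. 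In particular $p_0q_0^*$ itself is irreducible (take $rs^*$ to be an irreducible element it reduces to), so $p_0q_0^*$ is the unique irreducible reduction of $pq^*$, proving (1).

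Parts (2) and (3) then follow formally. If $pu_1u_1^*q^*$ and $pu_2u_2^*q^*$ are comparable, both reduce to $pq^*$, hence by transitivity and uniqueness in (1) both reduce to the unique irreducible element to which $pq^*$ reduces; conversely, if two elements of $G_E$ both reduce to an irreducible element $gh^*$, they can be written as $grr^*h^*$ and $gss^*h^*$, which are comparable by definition — this gives (2), the uniqueness clause being that of (1). For (3): by (2), $\sim$ coincides with the relation ``has the same irreducible form'', which is an equivalence relation because (1) makes the irreducible form a well-defined function on $G_E$, and the final clause of (3) is again (2). (Alternatively, reflexivity and symmetry of $\sim$ are immediate from the definition and transitivity is supplied by (2).)

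For (4), since $(rs^*)^*=sr^*$ the hypothesis is $t(pq^*sr^*)\neq0$, and I would apply Lemma~\ref{fix1} to the four paths $p,q,s,r$: either $r=pu$ and $s=qu$ for some path $u$, or $p=ru$ and $q=su$ for some path $u$. In the first case $rs^*=(pu)(qu)^*=puu^*q^*$, while trivially $pq^*=pww^*q^*$ with $w=\ra(p)$; since $\ra(p)=\ra(q)$, both $pq^*$ and $rs^*$ have the form $pww^*q^*$, so they are comparable. The second case is symmetric, with $r$ and $s$ as the common outer paths: $pq^*=(ru)(su)^*=ruu^*s^*$ and $rs^*=rww^*s^*$ with $w=\ra(r)$. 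Either way $pq^*\sim rs^*$. The only place demanding genuine care is the uniqueness in (1), which rests entirely on the elementary combinatorics of path concatenation (cancellation, and that a shorter common suffix sits inside a longer one); once that is pinned down, parts (2)--(4) are short, and I anticipate no obstacle beyond this bookkeeping.
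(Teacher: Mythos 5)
Your argument is correct and follows the same overall architecture as the paper's proof — part (1) first, parts (2) and (3) as formal consequences of (1), and part (4) by applying Lemma \ref{fix1} to $pq^*sr^*$ — but your proof of uniqueness in (1) takes a genuinely different route. The paper proves existence by iterating reductions (path lengths strictly decrease, so the process terminates) and proves uniqueness by a four-case prefix analysis: from $p=r_1t_1=r_2t_2$ and $q=s_1t_1=s_2t_2$ it deduces $r_1=r_2$ and $s_1=s_2$ by examining which of $r_1,r_2$ (respectively $s_1,s_2$) is a prefix of the other. You instead exhibit the irreducible form explicitly as $p_0q_0^*$, where $u$ is the longest common suffix of $p$ and $q$, and derive uniqueness from the observation that the common suffixes of a fixed pair of paths are totally ordered, each being a suffix of the longest one; together with your explicit verification that $\preceq$ is transitive (which the paper leaves implicit in calling $\preceq$ a partial order), this yields a shorter argument and an explicit formula for the normal form, at the cost of introducing the longest-common-suffix machinery. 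One sentence of yours needs reordering: you justify irreducibility of $p_0q_0^*$ by ``taking an irreducible element it reduces to,'' which presupposes the existence statement you had deferred to that very step. The repair is one line — a nontrivial common suffix $w$ of $p_0$ and $q_0$ would make $wu$ a common suffix of $p$ and $q$ longer than $u$, so $p_0q_0^*$ is irreducible outright (or, alternatively, supply the paper's termination argument first). With that adjustment the proof is complete; parts (2)--(4) coincide with the paper's reasoning.
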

\begin{proof}
(1) Consider $pq^*\in G_E.$ If $pq^*$ is irreducible, we are done. If not,
$p=ru$ and $q=su$ for some path $u$ of nonzero length and $rs^*\in G_E$. If
$rs^*$ is irreducible, we are done. If not, repeat the argument for $rs^*$.
Since we are either shortening the length of paths in each step or we end up
with an irreducible element, the process ends after finitely many steps and we 
arrive to an irreducible upper bound of $pq^*.$ 

To show uniqueness, assume that there are
irreducible elements  $r_1s_1^*$ and $r_2s_2^*$ of $G_E$ and
paths $t_1, t_2$ such that $pq^*=r_1t_1t_1^*s_1^*$ and $pq^*=r_2t_2t_2^*s_2^*.$
These relations imply that $p=r_1t_1=r_2t_2$, $q=s_1t_1=s_2t_2.$ Thus we have
that either $r_1$ is a prefix of $r_2$ or vice versa and that $s_1$ is a prefix
of $s_2$ or vice versa. In any of these cases, we claim that $r_1=r_2$ and
$s_1=s_2.$

If $r_1$ is a prefix of $r_2$ and $s_1$ is a prefix of $s_2,$ then $r_2=r_1u_1$
and $s_2=s_1u_2$ for some paths $u_1$ and $u_2.$ In this case,
$r_1t_1=r_1u_1t_2$ and $s_1t_1=s_1u_2t_2$ and so $t_1=u_1t_2=u_2t_2$ which
implies $u_1=u_2.$ Then we have that $r_2s_2^*=r_1u_1u_1^*s_1^*\preceq r_1s_1^*$
and so $u_1$ has to be a path of length zero by irreducibility of $r_2s_2^*$.
Thus  $r_2=r_1u_1=r_1$ and $s_2=s_1u_1=s_1.$ The case when $r_2$ is a prefix of
$r_1$ and $s_2$ is a prefix of $s_1$ is handled similarly. 

If $r_1$ is a prefix of $r_2$ and $s_2$ is a prefix of $s_1,$ then $r_2=r_1u_1$
and $s_1=s_2u_2$ for some paths $u_1$ and $u_2.$ In this case,
$r_1t_1=r_1u_1t_2$ and $s_2u_2t_1=s_2t_2$ and so $t_1=u_1t_2$ and $u_2t_1=t_2$
which implies $u_1u_2t_1=t_1$ and $u_2u_1t_2=t_2.$ This implies that $u_1u_2$
and $u_2u_1$ are paths of length zero and so $u_1=u_2$ is a vertex. Thus,
$r_2=r_1u_1=r_1$ and $s_1=s_2u_2=s_2u_1=s_2.$ The case when $r_2$ is a prefix of
$r_1$ and $s_1$ is a prefix of $s_2$ is handled similarly. Since $r_1=r_2$ and
$s_1=s_2$ in any case, $pq^*$ reduces to a unique irreducible element.

(2) Let $p_1q_1^*$ and $p_2q_2^*$ be comparable elements of $G_E$.  We claim
that they reduce to the same irreducible element. Since $p_1q_1^*$ and
$p_2q_2^*$ are comparable,  $p_1q_1^*=pu_1u_1^*q^*$ and 
$p_2q_2^*=pu_2u_2^*q^*$ for some $pq^*\in G_E$ and paths $u_1, u_2.$ Let $rs^*$
be the irreducible element to which $pq^*$ reduces. Thus $pq^*=ruu^*s^*$ for
some path $u$. Thus  $p_1q_1^*=ruu_1u_1^*u^*s^*$ and 
$p_2q_2^*=ruu_2u_2^*u^*s^*$ and so $p_1q_1^*$ and $p_2q_2^*$ reduce to
irreducible $rs^*.$ The uniqueness of $rs^*$ follows from part (1). 

The converse follows by the definition of comparability. 

(3) Relation $\sim$ is clearly reflexive and symmetric. To
show transitivity, let $p_iq_i^*\in G_E, i=1,2,3,$    
$p_1q_1^*\sim p_2q_2^*,$ and $p_2q_2^*\sim p_3q_3^*$. By part (2), there are
irreducible elements  $pq^*$ and $rs^*$ of $G_E$ such that 
$p_1q_1^*$ and $p_2q_2^*$ reduce to $pq^*$ and $p_2q_2^*$ and $p_3q_3^*$ reduce
to $rs^*.$ Then $p_2q_2^*$ reduces to both $pq^*$ and $rs^*.$ By (1), $p=r$ and
$q=s$ and all three elements reduce to $pq^*$. Thus, 
$p_1q_1^*\sim p_3q_3^*$ by part (2). 

The second part of the claim follows from the transitivity of $\sim$ and parts (1)
and (2). 

(4) If $t(pq^*(rs^*)^*)=t(pq^*sr^*)\neq 0$  for some canonical trace $t$ on
$L_K(E)$ and some $pq^*, rs^*\in G_E,$ then either  $r=pu$ and $s=qu$ or $p=ru$
and $q=su$ for some path $u$ by Lemma \ref{fix1}. In the first case
$rs^*=puu^*q^*$ and in the second case $pq^*=ruu^*s^*$. In both cases
$pq^*$ and $rs^*$ are comparable. 
\end{proof}

The following lemma is the last one we need for the proof of Theorem
\ref{positive}. 

\begin{lemma} Let $t$ be a canonical, $K$-linear
trace on $L_K(E)$.
\begin{enumerate}
\item If $a_i\in K,$ and $r_ir_i^*, pr_ir_i^*q^*\in G_E$ for $i=1,\ldots, m,$ 
\[
x  =  \sum_{i=1}^m a_i pr_ir_i^*q^*\;\;\mbox{ and }\;\;
y  =  \sum_{i=1}^m a_i\,r_ir_i^*,
\]
then
\[t(xx^*)=t(yy^*).\]

\item If $t$ satisfies condition (P) from Lemma
\ref{condition_P},  
\[
x  =  \sum_{i=1}^m \sum_{j=1}^{m_i} a_{ij} e_{i}r_{ij}r_{ij}^*
e_i^*+av\;\;\mbox{ and }\;\;
y  =  \sum_{i=1}^m \sum_{j=1}^{m_i} a_{ij} e_{i}r_{ij}r_{ij}^* e_i^* +
a\sum_{i=1}^m e_ie_i^*
\]
where $v$ is a vertex of $E$, $\{e_1,\ldots, e_m\}\subseteq \so^{-1}(v),$ 
$e_i\neq e_j$ for $i\neq j,$ $a, a_{ij}\in K,$ $e_{i}r_{ij}r_{ij}^* e_i^*\in
G_E$ for $i=1,\ldots, m$ and $j=1,\ldots, m_i,$ then 
\[t(xx^*)\geq t(yy^*).\]
\end{enumerate}
\label{induction_tool}
\end{lemma}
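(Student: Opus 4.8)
The plan is to prove both statements by a direct computation inside $L_K(E)$, simplifying $xx^*$ and $yy^*$ using only the relations (V), (E1), (E2), (CK1) together with centrality and $K$-linearity of $t$, and then comparing. Canonicity is not actually needed for part (1) --- only centrality of $t$ is used --- while for part (2) the one extra ingredient is condition (P) of Lemma \ref{condition_P}.

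For part (1), observe that $x=p\,y\,q^*$, since the common prefix $p$, the common suffix $q^*$, and the scalars $a_i$ all pull out of the sum. Hence $xx^*=p\,y\,q^*q\,y^*\,p^*=p\,y\,\ra(q)\,y^*\,p^*$. Parsing each hypothesis $pr_ir_i^*q^*\in G_E$ as $(pr_i)(qr_i)^*$ shows that $pr_i$ and $qr_i$ are genuine paths, so $\so(r_i)=\ra(p)=\ra(q)$ for every $i$; then (E2) yields $r_i^*\ra(q)=r_i^*$, whence $y\,\ra(q)=y$ and $xx^*=p\,y\,y^*\,p^*$. Now centrality of $t$ carries the trailing $p^*$ around to the front, so $t(xx^*)=t\bigl(y\,y^*\,p^*p\bigr)=t\bigl(y\,y^*\,\ra(p)\bigr)$; and since every monomial of $yy^*$ ends in some $r_j^*$ with $\ra(r_j^*)=\so(r_j)=\ra(p)$, we have $yy^*\,\ra(p)=yy^*$. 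Therefore $t(xx^*)=t(yy^*)$.

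For part (2), write $z=\sum_{i,j}a_{ij}\,e_ir_{ij}r_{ij}^*e_i^*$ and $w=\sum_{i=1}^m e_ie_i^*$, so that $x=z+av$ and $y=z+aw$ with $v^*=v$ and $w^*=w$. Using (V), (E1), (E2), and the orthogonality $e_i^*e_k=\delta_{i,k}\ra(e_i)$ coming from (CK1), one verifies the monomial identities $v^2=v$, $w^2=w$, $zv=z$, $vz^*=z^*$, $zw=z$, $wz^*=z^*$ (the identity $zw=z$ uses that $e_ir_{ij}$ is a path, so $\so(r_{ij})=\ra(e_i)$). Expanding $xx^*=(z+av)(z^*+a^*v)$ and $yy^*=(z+aw)(z^*+a^*w)$ and applying the additive, $K$-linear map $t$, the first three resulting terms cancel in the difference, leaving $t(xx^*)-t(yy^*)=aa^*\bigl(t(v)-t(w)\bigr)$. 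By centrality $t(w)=\sum_{i=1}^m t(e_ie_i^*)=\sum_{i=1}^m t(\ra(e_i))$, so $t(v)-t(w)=t\bigl(v-\sum_{e\in I}\ra(e)\bigr)$ with $I=\{e_1,\dots,e_m\}\subseteq\so^{-1}(v)$, and this is $\geq 0$ by condition (P). Finally, multiplying a positive element of the codomain by $aa^*\in K$ yields a positive element (it sends $\sum_k b_kb_k^*$ to $\sum_k(ab_k)(ab_k)^*$), so $t(xx^*)-t(yy^*)\geq 0$, i.e.\ $t(xx^*)\geq t(yy^*)$.

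The computations are routine expansions; the point that genuinely needs care is the bookkeeping of source/range conditions --- checking that the $G_E$-membership hypotheses are exactly what is required to justify each absorption such as $r_i^*\ra(q)=r_i^*$, $yy^*\ra(p)=yy^*$, and $zw=z$ --- together with, in part (2), verifying that the shared summand $z$ interacts identically with $v$ and with $w$, so that only the pure $aa^*$-term survives the subtraction. I do not anticipate any deeper obstacle.
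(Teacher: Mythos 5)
Your proof is correct and follows essentially the same route as the paper's: part (1) uses $q^*q=\ra(q)$ and centrality to strip off $p$ and $q^*$, and part (2) expands $xx^*$ and $yy^*$, cancels the common terms, and applies (P) to the surviving $aa^*(v-\sum_i e_ie_i^*)$ term. The only differences are presentational (you factor $x=pyq^*$ and work with the blocks $z,w$ rather than expanding into explicit double sums), and you make explicit the step, left implicit in the paper, that multiplying a positive element by $aa^*$ preserves positivity.
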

\begin{proof}
To prove (1), note that $$xx^*=
\sum_{i=1}^m\sum_{j=1}^m a_ia_j^*  pr_ir_i^*q^*qr_jr_j^*p^*=
\sum_{i=1}^m\sum_{j=1}^m a_ia_j^*  pr_ir_i^*r_jr_j^*p^*\mbox{ 
and }yy^*=\sum_{i=1}^m\sum_{j=1}^m a_ia_j^* r_ir_i^*r_jr_j^*.$$
Thus,
\[t(xx^*) 
=  \sum_{i=1}^m\sum_{j=1}^m a_ia_j^* t(pr_ir_i^*r_jr_j^*p^*)
=  \sum_{i=1}^m\sum_{j=1}^m a_ia_j^*  t(p^*pr_ir_i^*r_jr_j^*)
=  \sum_{i=1}^m\sum_{j=1}^m a_ia_j^*  t(r_ir_i^*r_jr_j^*)
\]
This last expression is equal to $t(yy^*).$

To prove (2), compute that
\[
xx^* =\underline{
\sum_{i=1}^m \sum_{j=1}^{m_i}\sum_{k=1}^{m_i} a_{ij} a_{ik}^*
e_{i}r_{ij}r_{ij}^*r_{ik}r_{ik}^* e_i^*+ \sum_{i=1}^m \sum_{j=1}^{m_i}
(a_{ij}a^*+aa_{ij}^*) e_{i}r_{ij}r_{ij}^* e_i^*}
+ aa^*v\]
by using the fact that $e_i\neq e_j$ for $i\neq j$ and that $\so(e_i)=v$ for
all $i,j=1,\ldots, m.$ Similarly,                                   
\[
yy^* = 
\underline{
\sum_{i=1}^m \sum_{j=1}^{m_i}\sum_{k=1}^{m_i} a_{ij}a_{ik}^*
e_{i}r_{ij}r_{ij}^*r_{ik}r_{ik}^* e_i^* +
\sum_{i=1}^m \sum_{j=1}^{m_i} (a_{ij}a^*+aa_{ij}^*) e_{i}r_{ij}r_{ij}^* e_i^*
} + aa^*\sum_{i=1}^m e_ie_i^*.
\]
Since the underlined parts are equal, 
\[t(xx^*-yy^*)=
t(aa^*v-aa^*\sum_{i=1}^m e_ie_i^*)=
aa^*\left(t(v)-\sum_{i=1}^m t(e_ie_i^*)\right)=\]
\[=
aa^*\left(t(v)-\sum_{i=1}^m t(\ra(e_i))\right)=
aa^*t(v-\sum_{i=1}^m \ra(e_i))
\geq 0\]
by (P). Thus $t(xx^*) \geq t(yy^*).$
\end{proof}

We can now prove the main results of this section starting with the following. 

\begin{theorem}
Let $t$ be a canonical,
$K$-linear trace on $L_K(E).$ Then $t$ is positive if and only if  
\begin{enumerate}
\item[(P)] $\;\;\;t(v-\sum_{e\in I} \ra(e))\geq 0\;\;\;$ for 
all vertices $v$ and finite subsets $I$ of $\so^{-1}(v)$. 
\end{enumerate}
\label{positive}
\end{theorem}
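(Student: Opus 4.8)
The plan is to prove the two implications separately.

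For the forward direction, assume $t$ is positive and fix a vertex $v$ together with a finite subset $I\subseteq\so^{-1}(v)$. Using axioms (V), (E1), (E2), (CK1)---in particular $e^*f=\delta_{e,f}\ra(e)$, which collapses $\sum_{e,f\in I}ee^*ff^*$ down to $\sum_{e\in I}ee^*$---one checks that $f_I:=v-\sum_{e\in I}ee^*$ is a self-adjoint idempotent, so $f_I=f_If_I^*$ is positive and $t(f_I)\geq 0$. Since $t$ is central, $t(ee^*)=t(e^*e)=t(\ra(e))$, hence $t(f_I)=t\bigl(v-\sum_{e\in I}\ra(e)\bigr)$, which is exactly (P). (Alternatively, this direction follows from \cite[Proposition 29]{Zak_paper} together with Lemma \ref{condition_P}.)

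For the converse, assume (P); we must show $t(xx^*)\geq 0$ for every $x\in L_K(E)$. Write $x=\sum_{i=1}^n a_ip_iq_i^*$ with $a_i\in K$ and $p_i,q_i$ paths, so that $xx^*=\sum_{i,j}a_ia_j^*\,p_iq_i^*(p_jq_j^*)^*$. First I reduce to a single comparability class: group the indices according to the $\sim$-class of $p_iq_i^*$ from Lemma \ref{irreducible}, and set $x_C=\sum_{i:\,p_iq_i^*\in C}a_ip_iq_i^*$. By part (4) of Lemma \ref{irreducible}, $t\bigl(p_iq_i^*(p_jq_j^*)^*\bigr)=0$ whenever $p_iq_i^*$ and $p_jq_j^*$ lie in different $\sim$-classes, so $t(xx^*)=\sum_C t(x_Cx_C^*)$ and it suffices to treat each $x_C$. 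By parts (2) and (3) of Lemma \ref{irreducible}, the members of $C$ all reduce to one common irreducible element $rs^*$, i.e.\ $p_iq_i^*=ru_iu_i^*s^*$ for paths $u_i$, each necessarily with source $w:=\ra(r)$; hence $x_C=r\bigl(\sum_i a_iu_iu_i^*\bigr)s^*$, and Lemma \ref{induction_tool}(1) (with $p=r$, $q=s$, $r_i=u_i$) gives $t(x_Cx_C^*)=t(y_Cy_C^*)$ where $y_C=\sum_i a_iu_iu_i^*$. It then remains to prove, by induction on $L=\max_i|u_i|$, that for paths $u_1,\dots,u_m$ with a common source $w$ and scalars $a_i\in K$, the element $y=\sum_i a_iu_iu_i^*$ satisfies $t(yy^*)\geq 0$. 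If $L=0$ then every $u_i=w$, so $y=cw$ with $c=\sum_i a_i$, $yy^*=cc^*w$, and $t(yy^*)=cc^*t(w)\geq 0$ by (P) applied with $I=\emptyset$ (here I use that the codomain is a $*$-algebra over the involutive field $K$, so $cc^*t(w)$ is again a sum of elements of the form $zz^*$). If $L\geq 1$, separate the vertex term and sort the remaining $u_i$ by their initial edge: $y=aw+\sum_{e\in E_0}e\,z_e\,e^*$, where $E_0$ is a finite subset of $\so^{-1}(w)$, $a\in K$, and $z_e=\sum_j a_{ej}v_{ej}v_{ej}^*$ with each $v_{ej}$ a path of source $\ra(e)$ and length at most $L-1$. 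Applying Lemma \ref{induction_tool}(2) with $v=w$ and the distinct edges being the elements of $E_0$ (this is the second use of (P)) yields $t(yy^*)\geq t(y'y'^*)$, where $y'=\sum_{e\in E_0}e\,z_e\,e^*+a\sum_{e\in E_0}ee^*=\sum_{e\in E_0}e\,w_e\,e^*$ with $w_e:=z_e+a\,\ra(e)$. Since $e^*f=\delta_{e,f}\ra(e)$, the cross terms collapse and $y'y'^*=\sum_{e\in E_0}e\,w_ew_e^*\,e^*$, so by centrality $t(y'y'^*)=\sum_{e\in E_0}t(w_ew_e^*)$. Each $w_e$ is a $K$-combination of $v'v'^*$ over paths $v'$ of source $\ra(e)$ and length at most $L-1$, so $t(w_ew_e^*)\geq 0$ by the induction hypothesis, whence $t(y'y'^*)\geq 0$ and therefore $t(yy^*)\geq 0$.

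The substantive work is largely encapsulated in the lemmas already established: Lemmas \ref{fix1} and \ref{irreducible} handle the combinatorial bookkeeping of comparability classes, while Lemma \ref{induction_tool} supplies both the reduction from $x_C$ to $y_C$ and the one-step length decrease that drives the induction. The step I expect to demand the most care is the inductive step itself---verifying that, after grouping by initial edge, the remainder $w_e$ is genuinely of the same shape as $y$ but with strictly smaller maximal length (so the induction is well-founded), and that the data $(E_0,z_e,a)$ fit the hypotheses of Lemma \ref{induction_tool}(2) under the correct relabelling of indices.
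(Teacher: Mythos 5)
Your proof is correct and follows essentially the same route as the paper: the same decomposition into comparability classes via Lemma \ref{irreducible}, the same reduction $x_C\mapsto y_C$ and vertex-absorption step via parts (1) and (2) of Lemma \ref{induction_tool}, and the same grouping by initial edge. The only difference is presentational --- you package the paper's ``repeat the process, it terminates in finitely many steps'' as a clean induction on the maximal path length, which is a welcome tightening but not a new argument.
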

\begin{proof}
If $t$ is positive, then condition (P) holds by 
\cite[Proposition 29]{Zak_paper} and Lemma \ref{condition_P}. 

To prove the converse, it is sufficient to show that $t(xx^*)\geq 0$ for every 
$x\in L_K(E).$ 
The elements of $G_E$ generate $L_K(E)$ as a $K$-algebra so it is sufficient to
assume that $x$ is a $K$-linear combination of $G_E$ elements. Such $x$ can
be written as 
\begin{equation}\label{form1}
x=\sum_{i=1}^n x_i
\end{equation}
where the elements
$x_i$ are $K$-linear combinations of comparable elements reducible
to the same irreducible element $p_iq_i^*\in G_E$ and all irreducible
elements $p_iq_i^*$, $p_jq_j^*$ are different, thus not comparable, for $i\neq
j$. This representation
of $x$ is possible by part (3) of Lemma \ref{irreducible}.
Since $t(x_ix_j^*)=0$ for $i\neq j$ by construction and part (4) of Lemma
\ref{irreducible}, 
$t(xx^*)=\sum_{i=1}^n t(x_ix_i^*)$.

Thus, it is sufficient to consider elements $x$ which are $K$-linear
combinations of $G_E$ elements comparable to each other. Let $x$ be one such
element. By part (3) of Lemma \ref{irreducible}, there is an irreducible $G_E$
element $pq^*$ such that $x$ can be written as
\begin{equation}\label{form2}
x=\sum_{j=1}^l a_j pr_jr_j^*q^*
\end{equation}
where $pr_jr_j^*q^*\in G_E$ and $a_j\in K$ for $j=1,\ldots, l.$
By part (1) of Lemma \ref{induction_tool}, $t(xx^*)=t(yy^*)$ where 
$y=\sum_{i=j}^l a_jr_jr_j^*.$ Note that
all paths $r_j$ have the same source $\ra(p)$ since $pr_j\neq 0$.
Thus, it is sufficient to consider elements $x$ of the form
\begin{equation}\label{form3}
x=\sum_{j=1}^l a_jr_jr_j^*,\;\;\; \so(r_i)=\so(r_j)\mbox{ for all
}i,j=1,\ldots, l.
\end{equation}

Let $v$ denote the source of all $r_j, j=1,\ldots,l.$ If several different
paths $r_j$ have zero length, group them in a single term by writing
$av+bv$ as $(a+b)v$ for $a,b\in K$. Depending on the coefficient with $v$
being zero or nonzero, we have three possible cases. 
\begin{enumerate}
\item[Case 1.] None of the paths $r_j$ have zero length.

\item[Case 2.] Exactly one of the paths $r_j$ has zero length and $l=1$.

\item[Case 3.] Exactly one of the paths $r_j$ has zero length and $l>1.$
\end{enumerate}

In case 1, let $e_i, i=1,\ldots, m$ be
the list of all edges that are the first in paths $r_j, j=1,\ldots, l$ without
repetition. Let us denote every $r_j$ as $e_ir_{ik}$ 
for some paths $r_{ik}$ in which case we write $a_j$ as $a_{ik}.$ Then we can
write $x$ as
\begin{equation}\label{form4}
x=\sum_{i=1}^m x_i\;\;\;\mbox{ for }\;\;\;
x_i=\sum_{k=1}^{m_i} a_{ik} e_{i}r_{ik}r_{ik}^*e_i^*
\end{equation}
and $l=\sum_{i=1}^m m_i$ necessarily. If $i\neq j,$
$e_i^*e_j=0$ and so $x_ix_j^*=0.$ Thus  
$t(xx^*)=\sum_{i=1}^m t(x_ix_i^*).$

For every $x_i, i=1,\ldots m,$ we can apply part (1) of Lemma
\ref{induction_tool} to obtain that $t(x_ix_i^*)=t(z_iz_i^*)$ where 
\begin{equation}\label{form5}
z_i=\sum_{k=1}^{m_i} a_{ik} r_{ik}r_{ik}^*.
\end{equation}

In case 2, $l=1$ and $x$ has the form $x=a_1v$. Since $t(v)$ is positive by
condition (P), $t(xx^*)=a_1a_1^*t(v)$ is positive as well. 

In case 3, we show that the consideration reduces to either case 1 or case
2. Since there is one $r_j$ with zero length, we can assume it is $r_l.$ Let
$e_i, i=1,\ldots, m$ be the list of all edges that are the first in paths $r_j,
j=1,\ldots, l-1$ without repetition. Let $r_j=e_ir_{ik}$ for some paths $r_{ik}$
and let us represent $a_j$ as $a_{ik}$  so that we can write $x$ as
\[x=\sum_{i=1}^m \sum_{k=1}^{m_i} a_{ik} e_{i}r_{ik}r_{ik}^* e_i^*+a_lv.\]
Here $l-1$ is necessarily equal to $\sum_{i=1}^m m_i.$ In this case,
$t(xx^*)\geq t(yy^*)$ where
\[y=\sum_{i=1}^m \sum_{k=1}^{m_i} a_{ik} e_{i}r_{ik}r_{ik}^*
e_i^*+a_l\sum_{i=1}^me_ie_i^*\]
by part (2) of Lemma \ref{induction_tool}. So, it is sufficient to show that the
trace of $yy^*$ is positive. Since we can regroup the terms of $y$ so that
\[y=\sum_{i=1}^m y_i\;\;\;\mbox{ for }\;\;\;
y_i=\sum_{k=1}^{m_i} a_{ik} e_{i}r_{ik}r_{ik}^*e_i^*+a_le_ie_i^*,\] 
the element $y$ falls under case 1 and is represented as in
(\ref{form4}). In this case the elements $z_i=\sum_{k=1}^{m_i}
a_{ik}r_{ik}r_{ik}^*+a_l\ra(e_i)$ are as in  (\ref{form5}) and  
$t(y_iy_i^*)=t(z_iz_i^*)$. Thus, we either reduce our consideration to
$z_i$ as in (\ref{form5}) of case 1 or the elements $z_i$ already have the form
as in case 2. 

All paths $r_j$ of nonzero length in formula (\ref{form3}) have strictly longer
length than the paths $r_{ik}$ in (\ref{form5}) since $r_j=e_ir_{ik}$.
The expression in formula
(\ref{form5}) can be written as in (\ref{form1}) and the whole process can be
repeated treating each $z_i$ as the original $x$ in formula (\ref{form1}). This
process terminates in finitely many steps and
eventually reduces the consideration of all the elements $x$ to those of the
form $av$ where $a\in K$ and $v\in E^0$. This situation
has been handled in case 2 above. So, this finishes the proof. 
\end{proof}

We turn to conditions characterizing faithfulness of a canonical
trace now. 

\begin{theorem} Let $R$ be a positive definite $K$-algebra and $t$ a canonical,
$K$-linear, $R$-valued trace on $L_K(E).$ Then $t$ is faithful
if and only if conditions (P) and (F) hold where 
\begin{enumerate}
\item[(F)]  $t(v)> 0$ for all vertices $v.$
\end{enumerate}
\label{faithful}
\end{theorem}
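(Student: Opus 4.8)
The plan is to prove both implications; the forward one is immediate, and the converse runs the reduction already developed for Theorem \ref{positive}. For the ``only if'' direction: if $t$ is faithful it is in particular positive, so (P) holds by Theorem \ref{positive}; and each vertex $v$ is a nonzero positive element of $L_K(E)$ (it is a self-adjoint idempotent, $v=vv^*$, and $v\neq 0$), so $t(v)=t(vv^*)>0$, which is (F).

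For the ``if'' direction I would assume (P) and (F). Then $t$ is positive by Theorem \ref{positive}, so by Lemma \ref{positively_faithful} (applied with the lemma's $T$ equal to our $R$, which is positive definite) it suffices to verify condition (4) of that lemma: $t(xx^*)=0$ implies $x=0$ for every $x\in L_K(E)$. I would then rerun the reduction from the proof of Theorem \ref{positive}, observing that each step reflects vanishing of the element. Writing $x=\sum_{i=1}^n x_i$ as in (\ref{form1}), positivity of $t$ together with positive definiteness of $R$ and part (4) of Lemma \ref{irreducible} force $t(x_ix_i^*)=0$ for each $i$, so it is enough to show each $x_i=0$; writing $x_i=p(\sum_j a_j r_jr_j^*)q^*$ as in (\ref{form2}) with $pq^*$ irreducible, part (1) of Lemma \ref{induction_tool} gives $t(zz^*)=0$ for $z=\sum_j a_j r_jr_j^*$, and $z=0$ yields $x_i=pzq^*=0$. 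Thus everything reduces to one claim: if $x=\sum_j a_j r_jr_j^*$ with all the paths $r_j$ sharing a common source and $t(xx^*)=0$, then $x=0$.

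I would prove this claim by induction on $d(x):=\max_j|r_j|$. If $d(x)=0$, then after collecting terms $x=av$ for a single vertex $v$ and $a\in K$; since $aa^*\in K$ lies in the center of $R$ and is invertible whenever $a\neq 0$, while $t(v)\neq 0$ by (F), the equation $0=t(xx^*)=aa^*t(v)$ forces $a=0$. For the inductive step I would group the zero-length terms into a single summand $av$ (with $a$ possibly $0$) and split the rest according to their first edge $e_1,\dots,e_m$, writing $x=\sum_{i=1}^m e_iw_ie_i^*+av$ where $w_i=\sum_k a_{ik}r_{ik}r_{ik}^*+a\,\ra(e_i)$ (the last term present only when $a\neq 0$), all $r_{ik}$ have source $\ra(e_i)$, and $d(w_i)<d(x)$. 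If $a=0$: orthogonality of the $e_i$ (so $e_i^*e_j=0$ for $i\neq j$) together with part (1) of Lemma \ref{induction_tool} reduces $t(xx^*)=0$ to $t(w_iw_i^*)=0$, hence $w_i=0$ by induction and $x=0$. If $a\neq 0$: part (2) of Lemma \ref{induction_tool} gives $t(xx^*)=t(yy^*)+aa^*\,t(v-\sum_{i=1}^m\ra(e_i))$, where $y$ is obtained from $x$ by replacing the summand $av$ with $a\sum_{i=1}^m e_ie_i^*$ and both summands on the right are positive (the second because $aa^*$ is a positive scalar and $t(v-\sum_i\ra(e_i))\geq 0$ by (P)); hence both vanish, and from $t(yy^*)=0$ the same orthogonality-plus-induction argument gives $w_i=0$ for all $i$, so $x=a(v-\sum_{i=1}^m e_ie_i^*)$. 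Finally, if $v$ is a regular vertex with $\{e_1,\dots,e_m\}=\so^{-1}(v)$ then $v-\sum_i e_ie_i^*=0$ by (CK2) and $x=0$; otherwise there is $f\in\so^{-1}(v)\setminus\{e_1,\dots,e_m\}$, and since $v-\sum_i e_ie_i^*-ff^*$ is again a self-adjoint idempotent we get $t(v-\sum_i\ra(e_i))=t(v-\sum_i e_ie_i^*)\geq t(ff^*)=t(\ra(f))>0$ by (F); as $aa^*$ is an invertible central scalar this contradicts $aa^*t(v-\sum_i\ra(e_i))=0$ unless $a=0$, and so $x=0$ in this case as well.

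The routine parts are the algebraic identities already packaged in Lemmas \ref{irreducible} and \ref{induction_tool}. The step I expect to be the real work is the inductive step of the claim: recognizing that once the ``edge part'' of $x$ has been annihilated the remainder is exactly $a(v-\sum_i e_ie_i^*)$, and then exploiting the dichotomy coming from (CK2) — this element is either literally $0$ (a regular vertex with all of its edges used) or it dominates some $ff^*$, in which case (F) forces the scalar $a$ to vanish. One should also keep in mind throughout that $K$ embeds in the center of $R$, so a nonzero scalar is invertible there, and that positive definiteness of $R$ permits splitting a vanishing finite sum of positive elements term by term.
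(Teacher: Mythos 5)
Your proposal is correct and follows essentially the same argument as the paper: reduce via Lemma \ref{positively_faithful} to showing $t(xx^*)=0$ forces $x=0$, rerun the decomposition from the proof of Theorem \ref{positive} using Lemmas \ref{irreducible} and \ref{induction_tool}, and at the crucial step exploit the dichotomy between (CK2) (so $v-\sum_i e_ie_i^*=0$) and the existence of an unused edge $f$ (so (F) gives $t(v-\sum_i\ra(e_i))\geq t(\ra(f))>0$, a contradiction). Your explicit induction on $\max_j|r_j|$ is just a cleaner packaging of the paper's ``repeat the process until only terms $av$ remain''; the only blemish is the displayed formula $x=\sum_{i=1}^m e_iw_ie_i^*+av$, which double-counts $a\sum_i e_ie_i^*$ when $w_i$ is taken to include the term $a\,\ra(e_i)$, though your subsequent definition of $y$ makes the intended meaning clear.
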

\begin{proof}
If $t$ is faithful then $t$ is positive so (P) holds. Condition (F) clearly
holds as well.                              

Assume now (P) and (F). By Theorem \ref{positive}, the trace $t$ is positive.
Since $R$ is positive definite, to show that $t$ is faithful  it is sufficient
to show
that $t(xx^*)= 0$ implies that $x=0$ for any $x\in L_K(E)$ by Lemma
\ref{positively_faithful}. The proof 
follows the stages of the proof of Theorem \ref{positive}, so the labels of the
formulas and notation refer to those used in the proof of Theorem
\ref{positive}. Writing
$x$ as
$\sum_{i=1}^n x_i$ as in formula (\ref{form1}), we have that 
$t(xx^*)=\sum_{i=1}^n t(x_ix_i^*)=0$. Since $t(x_ix_i^*)\geq 0$ because $t$
is positive, we have that $t(x_ix_i^*)= 0$ for
every $i=1,\ldots, n$ by the assumption that $R$ is positive definite.
Thus, it is sufficient to prove that each $x_i$ is zero.
So, it is sufficient to consider $x$ which has the form as in formula
(\ref{form2}).
For such $x$, $t(xx^*)=t(yy^*)$ where $y$ is as in (\ref{form3}). Since
$x=pyq^*$, to show that $x$ is zero, it is sufficient to show that $y$ is
zero. So, it is
sufficient to consider $x$ to be as in (\ref{form3}). 

For an element $x=\sum_{j=1}^l a_jr_jr_j^*$ with $\so(r_j)=v$
for all $j=1,\ldots, l$ as in (\ref{form3}), consider again cases 1, 2, and 3 as
in the proof of Theorem \ref{positive}. In case 1, write $x$ as in
(\ref{form4}). The terms $x_i, i=1,\ldots,m$ are such that
$0=t(xx^*)=\sum_{i=1}^m t(x_ix_i^*).$ Since $R$ is
positive definite and $t(x_ix_i^*)\geq 0$, we have that $t(x_ix_i^*)=0$ for all
$i.$ Thus, to show that
$x=0$ it is sufficient to show that $x_i=0$ for every $i.$ If  $z_i,
i=1,\ldots, m,$ are as in (\ref{form5}), then $x_i=e_iz_ie_i^*$. Thus, to show
that $x_i=0$ for all $i$, it is sufficient to show that $z_i=0$ for all $i.$

In case 2,  $x=a_1v$ where
$a_1\in K$ and
$v\in E^0.$ If $a_1\neq 0$ then $0=t(a_1a_1^*v)=a_1a_1^*t(v)$
implies
that $t(v)=0$ which contradicts (F). Thus $a_1=0$ and so $x=a_1v=0.$ 

In case 3, write $x$ as
\[x=\sum_{i=1}^m \sum_{k=1}^{m_i} a_{ik} e_{i}r_{ik}r_{ik}^* e_i^*+a_lv\]
where $e_i, i=1,\ldots, m$ 
is the list of all edges that are the first in paths $r_j, j=1,\ldots, l-1$
without repetition and $a_l\neq 0$ (otherwise $x$ falls under case 1). Also
as in the proof of Theorem \ref{positive}, let
\[y=\sum_{i=1}^m y_i\;\;\;\mbox{ for }\;\;\;
y_i=\sum_{k=1}^{m_i} a_{ik} e_{i}r_{ik}r_{ik}^*
e_i^*+a_le_ie_i^*\] and note that 
$0=t(xx^*)\geq t(yy^*)=\sum_{i=1}^m t(y_iy_i^*)\geq 0$ 
by part (2) of Lemma \ref{induction_tool} and so $\sum_{i=1}^m t(y_iy_i^*)=
0$. Since $R$ is positive definite and $t(y_iy_i^*)\geq 0,$ we have that
$t(y_iy_i^*)=0$ for all
$i=1,\ldots, m.$ By part (1) of Lemma \ref{induction_tool}, the elements 
\[z_i=\sum_{k=1}^{m_i} a_{ik} r_{ik}r_{ik}^*
+a_l\ra(e_i)\] are such that $0=t(y_iy_i^*)=t(z_iz_i^*)$. We claim that
showing $z_i=0$ for all $i=1,\ldots, m$ is sufficient to show that
$x=0.$ Indeed, if $z_i=0,$ then $y_i=e_iz_ie_i^*=0$ as well and so 
$\sum_{k=1}^{m_i} a_{ik} e_{i}r_{ik}r_{ik}^*e_i^*
=-a_le_ie_i^*.$ Thus, 
\[x=a_lv-\sum_{i=1}^m 
a_le_ie_i^*=a_l(v-\sum_{i=1}^m e_ie^*_i)\;\; \Rightarrow\;\;
xx^*=a_la_l^*(v-\sum_{i=1}^m e_ie^*_i).\]
Since $t(xx^*)=0$, we have that $a_la_l^*t(v-\sum_{i=1}^m e_ie^*_i)=0.$ This
implies  $t(v-\sum_{i=1}^m e_ie^*_i)=0$ because $a_l\neq 0.$ 
Note that $v$ is not a sink since $m>0.$ If  
$\{e_1, \ldots,e_m\}=\so^{-1}(v)$ then $v$ is regular,
$v=\sum_{i=1}^m e_ie^*_i$ by (CK2) and so $x=0$. If $\{e_1,
\ldots, e_m\}\subsetneq \so^{-1}(v),$ there is an
edge $f$ different from $e_1,\ldots, e_m$ with $v=\so(f).$ The element  $v-
ff^*-\sum_{i=1}^m e_ie^*_i$ is a projection (selfadjoint idempotent) and so it
is positive. Since $t$ is positive, $v\geq
ff^*+\sum_{i=1}^m e_ie^*_i$ implies that 
$t(v)\geq t(ff^*)+\sum_{i=1}^mt(ee^*).$ Since (F)
holds, $t(ff^*)=t(\ra(f))>0$ and so  
$$t(v)\geq t(ff^*)+\sum_{i=1}^mt(e_ie_i^*)>\sum_{i=1}^mt(e_ie_i^*).$$ 
This contradicts $0=t(v-\sum_{i=1}^m e_ie^*_i)=t(v)-\sum_{i=1}^m t(e_ie^*_i).$  

Thus, both case 1 and case 3 reduce to the consideration of 
elements $z_i$ as in (\ref{form5}). The expression in (\ref{form5}) can be
written as an
expression in (\ref{form1}) again and the whole process
can be repeated. In every step the lengths of the paths in formula
(\ref{form5}) are shorter than the lengths of the corresponding paths in
(\ref{form3}). Thus the process terminates in
finitely many steps and eventually reduces to the consideration of the
elements of the form $av$ where $a\in K$ and $v\in E^0,$ handled in case 2.  
Thus, we have that $t(xx^*)=0$ implies that $x=0$ for any $x.$
\end{proof}

Note that under assumptions of Theorem \ref{faithful}, $K$ is positive
definite by Proposition \ref{positively_faithful_on_LPA} and then so is $L_K(E)$
by \cite[Proposition 2.4]{GonzaloRangaLia}.

Theorems \ref{positive} and \ref{faithful} imply the following result.

\begin{theorem} Let $R$ be an involutive $K$-algebra. The correspondence $\tau$
from Proposition \ref{canonical} is
such that it induces a bijective correspondence between 
\begin{enumerate}
\item positive, $R$-valued graph traces on $E$ and

\item positive, canonical, $K$-linear, $R$-valued traces on $L_K(E).$
\end{enumerate}
If $R$ is positive definite, the correspondence $\tau$ is
such that it induces a bijective correspondence between 
\begin{enumerate}
\item faithful, $R$-valued graph traces on $E$ and

\item faithful, canonical, $K$-linear, $R$-valued traces on $L_K(E).$
\end{enumerate}
\label{correpondence}
\end{theorem}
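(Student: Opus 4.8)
The plan is to leverage Proposition \ref{canonical}, which already exhibits $\tau$ as a bijection between \emph{all} $R$-valued graph traces on $E$ and \emph{all} canonical, $K$-linear, $R$-valued traces on $L_K(E)$. A restriction of a bijection to a subset of its domain is automatically a bijection onto the image of that subset, so the entire content of the theorem is to identify the image under $\tau$ of the positive graph traces with the positive canonical traces, and (in the positive definite case) the image of the faithful graph traces with the faithful canonical traces. This amounts to matching up the defining conditions on the two sides, with Theorems \ref{positive} and \ref{faithful} doing the real work.

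First I would treat positivity. Fix an $R$-valued graph trace $\delta$ and let $t_\delta=\tau(\delta)$ be the associated canonical, $K$-linear trace; by the construction in Proposition \ref{canonical} we have $t_\delta(v)=\delta(v)$ for every vertex $v$, and since $t_\delta$ is canonical, $t_\delta(\ra(e))=\delta(\ra(e))$. By Theorem \ref{positive}, $t_\delta$ is positive if and only if condition (P) holds, i.e. $t_\delta\bigl(v-\sum_{e\in I}\ra(e)\bigr)\geq 0$ for all vertices $v$ and finite $I\subseteq\so^{-1}(v)$. By additivity of $t_\delta$ this says $\delta(v)-\sum_{e\in I}\delta(\ra(e))\geq 0$, which is exactly condition (P)$_\delta$. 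Hence $\delta$ is a positive graph trace if and only if $t_\delta$ is a positive canonical trace. Conversely, any positive canonical, $K$-linear trace $t$ equals $\tau(\delta)$ where $\delta=\tau^{-1}(t)$ is the restriction of $t$ to $E^0$, and the same computation shows this $\delta$ satisfies (P)$_\delta$, hence is a positive graph trace. Therefore $\tau$ restricts to a bijection between the positive graph traces on $E$ and the positive, canonical, $K$-linear, $R$-valued traces on $L_K(E)$.

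For the faithful case, assume in addition that $R$ is positive definite, so Theorem \ref{faithful} applies. Keeping $\delta$ and $t_\delta$ as above, Theorem \ref{faithful} states that $t_\delta$ is faithful if and only if conditions (P) and (F) both hold, where (F) reads $t_\delta(v)>0$ for all vertices $v$. Since $t_\delta(v)=\delta(v)$, condition (F) for $t_\delta$ is literally condition (F)$_\delta$ for $\delta$. Combining this with the equivalence (P) $\Leftrightarrow$ (P)$_\delta$ from the previous paragraph, we conclude that $\delta$ is a faithful graph trace (that is, $\delta$ satisfies (P)$_\delta$ and (F)$_\delta$) precisely when $t_\delta$ is a faithful canonical, $K$-linear trace. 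As before, any faithful canonical trace $t$ is $\tau(\delta)$ for $\delta=\tau^{-1}(t)$ its restriction to $E^0$, and this $\delta$ is then a faithful graph trace; so $\tau$ restricts to a bijection between the faithful graph traces on $E$ and the faithful, canonical, $K$-linear, $R$-valued traces on $L_K(E)$.

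I do not anticipate a genuine obstacle: the substantive arguments are already contained in Theorems \ref{positive} and \ref{faithful} and in Proposition \ref{canonical}, and the remaining task is purely a translation of conditions. The only points requiring care are (i) using explicitly that $\tau^{-1}(t)$ is the restriction of $t$ to $E^0$, so that the trace-value conditions (P) and (F) on $L_K(E)$ and the graph-trace conditions (P)$_\delta$ and (F)$_\delta$ on $E$ are the same statements, and (ii) invoking the hypothesis that $R$ is positive definite exactly where Theorem \ref{faithful} requires it, namely for the faithful half of the correspondence.
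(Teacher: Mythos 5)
Your proposal is correct and follows essentially the same route as the paper: restrict the bijection $\tau$ from Proposition \ref{canonical} and use Theorems \ref{positive} and \ref{faithful} to identify condition (P) with (P)$_\delta$ and condition (F) with (F)$_\delta$ via the fact that $\tau^{-1}(t)$ is the restriction of $t$ to $E^0$. No gaps.
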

\begin{proof}
Let us recall that  $t_\delta$ denotes                          
the unique extension of an $R$-valued graph trace $\delta$ on $E$ to a
canonical, $K$-linear, $R$-valued trace on $L_K(E)$ in the proof of Proposition
\ref{canonical}. 
Theorem \ref{positive} implies that if such graph trace $\delta$ is
positive, then $t_\delta$ is
positive as well. Conversely, if $t$ is a positive, canonical, $K$-linear,
$R$-valued trace, then
condition (P) holds. Since condition (P) implies (P)$_\delta$ for the
restriction of $t$ to vertices, the claim follows.  

Similarly, if $R$ is positive definite and a graph trace $\delta$ is faithful,
then $t_\delta$ is
faithful as well by Theorem \ref{faithful}. The converse clearly
follows since condition (F) implies (F)$_\delta$ for the
restriction of $t$ to vertices.
\end{proof}

The assumption that $R$ is positive definite is necessary in Theorem
\ref{faithful} and the second part of Theorem \ref{correpondence}. \cite[Example
34]{Zak_paper} can be used to demonstrate this.
In this example, the Leavitt path algebra of the graph with two vertices $v$ and
$w$
and one edge $e$ from $v$ to $w$ is considered over the field of complex
numbers with the identity involution. Mapping both vertices to
1 defines a faithful, $\Cset$-valued graph trace $\delta$ on $E$. Since $\delta$
is positive, it extends to a positive, canonical, $\Cset$-linear,
$\Cset$-valued
trace $t$ by Theorem \ref{correpondence} and $t$ is such that conditions (P)
and (F) are fulfilled. However, $t$ is not faithful since
$v-w=(v+iw)(v+iw)^*\geq 0,$ $t(v-w)=1-1=0$ and $v-w\neq 0.$

Theorem \ref{correpondence} has the following corollary.  
\begin{corollary}
The following conditions are equivalent for any positive definite field $K$. 
\begin{enumerate}
\item There is a faithful, canonical, $K$-linear, $K$-valued trace on $L_K(E).$

\item There is a faithful, canonical, $K$-valued trace on $L_K(E).$

\item There is a faithful, $K$-linear, $K$-valued trace on $L_K(E).$

\item There is a faithful, $K$-valued trace on $L_K(E).$

\item There is a faithful, $K$-valued graph trace on $E$.
\end{enumerate}
\end{corollary}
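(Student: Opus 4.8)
The plan is to prove the five conditions equivalent by establishing a cycle of implications, using the machinery already in place. The implications $(1)\Rightarrow(2)$ and $(3)\Rightarrow(4)$ are immediate, since a $K$-linear trace is in particular a trace, and a canonical $K$-linear trace is in particular a $K$-linear trace; so $(1)\Rightarrow(2)$, $(1)\Rightarrow(3)$, $(2)\Rightarrow(4)$, and $(3)\Rightarrow(4)$ all hold trivially by dropping hypotheses. The substantive content is getting back up: I would aim to show $(4)\Rightarrow(5)$ and $(5)\Rightarrow(1)$, which closes everything.

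For $(5)\Rightarrow(1)$: if $\delta$ is a faithful, $K$-valued graph trace on $E$, then since $K$ is positive definite and $K$ is a $K$-algebra (over itself), Theorem \ref{correpondence} applies with $R=K$: the extension $t_\delta=\tau(\delta)$ is a faithful, canonical, $K$-linear, $K$-valued trace on $L_K(E)$. This gives $(1)$ directly. The remaining and only real work is $(4)\Rightarrow(5)$: from an arbitrary faithful, $K$-valued trace $t$ on $L_K(E)$ (not assumed canonical, not assumed $K$-linear), produce a faithful $K$-valued graph trace $\delta$ on $E$. The natural candidate is the restriction $\delta = t|_{E^0}$. I must check two things: that $\delta$ satisfies (CK2)$_\delta$, and that $\delta$ is a faithful positive graph trace, i.e. (P)$_\delta$ and (F)$_\delta$. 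For (CK2)$_\delta$, centrality of $t$ gives $t(ee^*)=t(e^*e)=t(\ra(e))$ for each edge $e$, and additivity applied to the (CK2) relation $v=\sum_{e\in\so^{-1}(v)}ee^*$ at a regular vertex $v$ yields $\delta(v)=\sum_{e\in\so^{-1}(v)}\delta(\ra(e))$ — this is exactly the computation already carried out in the proof of Proposition \ref{canonical}, and it uses only that $t$ is additive and central, not $K$-linearity or canonicity. For (F)$_\delta$: each vertex $v=vv^*$ is a nonzero positive element, so faithfulness of $t$ gives $t(v)=\delta(v)>0$. For (P)$_\delta$: given a vertex $v$ and a finite $I\subseteq\so^{-1}(v)$, the element $v-\sum_{e\in I}ee^*$ is a projection (selfadjoint idempotent), hence positive; since $t$ is faithful it is in particular positive, so $t(v-\sum_{e\in I}ee^*)\geq 0$, and using centrality again $t(\sum_{e\in I}ee^*)=\sum_{e\in I}t(\ra(e))=\sum_{e\in I}\delta(\ra(e))$, giving $\delta(v)\geq\sum_{e\in I}\delta(\ra(e))$ in $K$.

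The main obstacle, such as it is, is conceptual rather than technical: one must be careful that in $(4)\Rightarrow(5)$ the trace $t$ is \emph{not} assumed canonical or $K$-linear, so one may only use additivity, centrality, positivity, and faithfulness of $t$ together with the defining relations of $L_K(E)$; fortunately every step above needs only these. One should also double-check that $v-\sum_{e\in I}ee^*$ is genuinely a selfadjoint idempotent: selfadjointness is clear since $(ee^*)^*=ee^*$, and idempotency follows from (CK1) giving $e^*f=\delta_{e,f}\ra(e)$, so $(ee^*)(ff^*)=\delta_{e,f}ee^*$ and the cross terms with $v$ collapse by (E1)–(E2). Once that is noted, the argument is complete: assembling $(1)\Rightarrow(2)\Rightarrow(4)\Rightarrow(5)\Rightarrow(1)$ and $(1)\Rightarrow(3)\Rightarrow(4)$ shows all five are equivalent.

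\begin{proof}
The implications $(1)\Rightarrow(2)$, $(1)\Rightarrow(3)$, $(2)\Rightarrow(4)$, and $(3)\Rightarrow(4)$ hold trivially, since each amounts to forgetting some of the hypotheses on the trace in question. It therefore suffices to prove $(4)\Rightarrow(5)$ and $(5)\Rightarrow(1)$.

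$(4)\Rightarrow(5)$. Let $t$ be a faithful, $K$-valued trace on $L_K(E)$ and let $\delta=t|_{E^0}$ be its restriction to the vertices. Since $t$ is additive and central, the computation in the proof of Proposition \ref{canonical} shows, for every regular vertex $v$,
\[\delta(v)=t\!\left(\sum_{e\in\so^{-1}(v)}ee^*\right)=\sum_{e\in\so^{-1}(v)}t(ee^*)=\sum_{e\in\so^{-1}(v)}t(e^*e)=\sum_{e\in\so^{-1}(v)}t(\ra(e))=\sum_{e\in\so^{-1}(v)}\delta(\ra(e)),\]
so $\delta$ satisfies (CK2)$_\delta$ and is a $K$-valued graph trace on $E$. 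Next, for a vertex $v$ and a finite subset $I$ of $\so^{-1}(v)$, the element $v-\sum_{e\in I}ee^*$ is a selfadjoint idempotent: selfadjointness is immediate, and idempotency follows from (E1), (E2) and (CK1). Hence $v-\sum_{e\in I}ee^*=\big(v-\sum_{e\in I}ee^*\big)\big(v-\sum_{e\in I}ee^*\big)^*\geq 0$, and since $t$ is positive and central,
\[0\leq t\!\left(v-\sum_{e\in I}ee^*\right)=t(v)-\sum_{e\in I}t(ee^*)=\delta(v)-\sum_{e\in I}\delta(\ra(e)),\]
so $\delta$ satisfies (P)$_\delta$. Finally, each vertex $v=vv^*$ is a nonzero positive element, so faithfulness of $t$ gives $\delta(v)=t(v)>0$, which is (F)$_\delta$. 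Thus $\delta$ is a faithful, $K$-valued graph trace on $E$.

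$(5)\Rightarrow(1)$. Since $K$ is positive definite and $K$ is a $K$-algebra over itself, Theorem \ref{correpondence} applied with $R=K$ shows that the extension $\tau(\delta)$ of a faithful, $K$-valued graph trace $\delta$ on $E$ is a faithful, canonical, $K$-linear, $K$-valued trace on $L_K(E)$. This establishes $(1)$ and completes the cycle of implications.
\end{proof}
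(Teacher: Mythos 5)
Your proposal is correct and follows essentially the same route as the paper: trivial downward implications, $(4)\Rightarrow(5)$ by restricting to the vertices, and $(5)\Rightarrow(1)$ via Theorem \ref{correpondence}. You simply spell out in more detail the verification that the restriction satisfies (CK2)$_\delta$, (P)$_\delta$, and (F)$_\delta$, which the paper leaves implicit; those checks are all sound.
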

\begin{proof}
The implications $(1)\Rightarrow (2)\Rightarrow (4)$ and  $(1)\Rightarrow
(3)\Rightarrow (4)$ are tautologies. Condition (4) implies (5) since the
restriction of a trace as in (4) to $E^0$ is a graph trace as in (5). Finally,
(5)
implies (1) since every graph trace as in (5) extends to a trace as in (1) by
Theorem \ref{correpondence}.
\end{proof}

If any of the equivalent conditions (1)--(5) hold for a positive definite field
$K$, we say that $L_K(E)$ {\em admits a faithful trace}.

We conclude this section with another corollary of Theorem
\ref{correpondence}. Namely, in \cite[Proposition 3.9]{Pask-Rennie}, it has
been shown that there is a
bijective correspondence 
between faithful, $\Cset$-valued graph traces on a countable, row-finite
graph $E$ and faithful,
semifinite, lower semicontinuous, gauge invariant, $\Cset$-valued traces on
$C^*(E)$. In \cite{Pask-Rennie}, a trace $t$ on a $C^*$-algebra $A$ is defined
as an
additive map on the positive cone $A^+$ of $A$ taking values in $[0,\infty]$
such that
$t(ax)=at(x)$ for a nonnegative real number $a$ and $x\in A^+$ and 
$t(xx^*)=t(x^*x)$ for all $x\in A$. 
To avoid confusion
with our definition of a trace, we shall refer to such map as a {\em
$C^*$-trace}. Clearly every positive, $\Cset$-linear trace on $A$ is a
$C^*$-trace.

Recall that every element $x=a+ib$ of
a $C^*$-algebra $A$ where $a$ and $b$ are the real and imaginary parts of $x$
(see \cite[page 105]{KadRing}), can be written as a $\Cset$-linear combination
$x=(a^+-a^-)+i(b^+-b^-)$ where $a^+, a^-, b^+, b^-$ are positive elements such
that $a^+a^-=a^-a^+=b^+b^-=b^-b^+=0$
 and this representation is unique (\cite[Corollary 4.2.4]{KadRing}). Thus,
every $C^*$-trace defined on the positive cone of $A$ can be 
extended to $A$ by letting  
$t(x)=t(a^+)-t(a^-)+i(t(b^+)-t(b^-)).$ It is straightforward to check that
this extension is $\Cset$-linear and positive. Without any danger of confusion,
we shall refer to this extension of a $C^*$-trace as a $C^*$-trace as well. 

A $C^*$-trace $t$ on a $C^*$-algebra $A$ is defined to be {\em faithful} if
$t(xx^*)=0$ implies that $x=0.$ This condition is equivalent to the one we
use to define a faithful additive map by Lemma \ref{positively_faithful}
because the complex-conjugate involution is positive definite and every
$C^*$-algebra is proper.
A $C^*$-trace $t$ on $A$ is {\em semifinite} if the set of elements of $A^+$
with finite trace is norm dense in $A^+$. A $C^*$-trace $t$ on $A$ is {\em
lower semicontinuous} if $t(\lim_{n\to\infty}
a_n)\leq \liminf_{n\to\infty}t(a_n)$ for all norm convergent sequences $a_n$ in
$A^+$. 

If $\{S_e, p_v \mid e\in E^1, v\in E^0\}$ is a Cuntz-Krieger
$E$-family for a graph $C^*$-algebra $C^*(E)$ (see \cite{{Gauge_Inv_Thm_paper}},
\cite{Tomforde_arxiv}
or \cite{Pask-Rennie} for example), the gauge action $\lambda$ on the unit
sphere $S^1$ is given by
$\lambda_z(S_pS_q^*)=z^{|p|-|q|}S_pS_q^*$ for $z\in S^1$ and paths $p$ and $q.$
A $C^*$-trace $t$ on
$C^*(E)$ is {\em gauge invariant} if
$t(\lambda_z S_pS_q^*)=t(S_pS_q^*)$ for every complex number $z$ of unit
length. Since such $t$ is $\Cset$-linear, this condition is equivalent to 
\begin{enumerate}
\item[(GI)] $\;\;\;t(S_pS_q^*)=z^{|p|-|q|}t(S_pS_q^*)\mbox{ for all complex
numbers }z\mbox{ of unit length}$
\end{enumerate}
and  all paths $p$ and $q.$ 
Thus, if a $C^*$-trace is gauge invariant in the sense of Definition
\ref{definition_canonical}, then it is gauge invariant in this sense. Our next
result, Corollary \ref{c-star}, shows that the converse holds for 
faithful, semifinite, lower semicontinuous $C^*$-traces on
$C^*(E)$ if $E$ is countable. Corollary \ref{c-star} also shows that it is not
necessary to assume that  $E$ is row-finite in \cite[Proposition 
3.9]{Pask-Rennie}.

\begin{corollary} Let $E$ be a countable graph and consider $\Cset$ with the
complex-conjugate involution. The following sets are in bijective
correspondences. 
\begin{enumerate}
\item The set of faithful, $\Cset$-valued graph traces on $E$,

\item the set of faithful, gauge invariant, $\Cset$-linear, $\Cset$-valued
traces on $L_{\Cset}(E),$ and

\item the set of faithful, semifinite, lower semicontinuous, gauge
invariant $C^*$-traces on $C^*(E)$.
\end{enumerate}
A faithful, semifinite, lower semicontinuous
$C^*$-trace on $C^*(E)$
satisfies {\em (GI)} if and only if
it is gauge invariant (in the sense of Definition
\ref{definition_canonical}).
\label{c-star}
\end{corollary}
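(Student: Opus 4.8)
The plan is to establish the three bijections by composing known correspondences and then proving one new implication. First I would recall from Proposition \ref{gauge_invariant} (and Definition \ref{definition_canonical}) that over $\Cset$, which has characteristic zero, a trace on $L_{\Cset}(E)$ is gauge invariant if and only if it is canonical. Combined with Theorem \ref{correpondence}, this immediately gives the bijection between set (1) and set (2): a faithful $\Cset$-valued graph trace $\delta$ extends uniquely to a faithful, canonical (hence gauge invariant), $\Cset$-linear, $\Cset$-valued trace $t_\delta$ on $L_{\Cset}(E)$, and conversely the restriction to $E^0$ of any faithful, gauge invariant, $\Cset$-linear trace is a faithful graph trace; the two constructions are mutually inverse. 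So the substantive content is the bijection between (1) (or (2)) and (3).

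Next I would set up the correspondence between set (1) and set (3). The heart of this is the Graph $C^*$-algebra analogue: \cite[Proposition 3.9]{Pask-Rennie} already gives, for countable row-finite $E$, a bijection between faithful $\Cset$-valued graph traces on $E$ and faithful, semifinite, lower semicontinuous, gauge invariant $C^*$-traces on $C^*(E)$. I would argue that the row-finiteness hypothesis there is not essential: the construction sending a graph trace $\delta$ to a $C^*$-trace is built from the same formula $t(S_pS_q^*)=\delta_{p,q}\delta(\ra(p))$ that defines $t_\delta$ algebraically, extended by $\Cset$-linearity and then to a densely defined $C^*$-trace; the only place row-finiteness enters the argument in \cite{Pask-Rennie} is in verifying that the (CK2)-type compatibility plus condition (P)$_\delta$ suffice, and condition (P)$_\delta$ was built precisely to cover infinite emitters (indeed Tomforde's condition (2) for infinite emitters, which we folded into (P)). I would spell out that for a countable (not necessarily row-finite) graph the Gelfand–Naimark-type estimates still go through because the partial sums $\sum_{e\in I}\ra(e)$ over finite $I\subseteq\so^{-1}(v)$ are bounded above by $v$ under (P)$_\delta$, giving the needed semifiniteness and lower semicontinuity. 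This identifies sets (1) and (3), at least once we know every faithful, semifinite, lower semicontinuous, gauge invariant $C^*$-trace on $C^*(E)$ arises this way.

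For that last point — that the correspondence onto (3) is onto — I would use the gauge-invariant uniqueness theorem: a gauge invariant $C^*$-trace is determined by its values on the fixed-point algebra $C^*(E)^\lambda$, which is an AF-algebra spanned by the $S_pS_q^*$ with $|p|=|q|$; positivity and the trace property force $t(S_pS_q^*)=\delta_{|p|,|q|}t(S_pS_q^*)$ and the trace property together with (CK1) force $t(S_pS_p^*)=t(\ra(p))$, so $t$ is completely determined by $\delta:=t|_{E^0}$, and $\delta$ is a faithful graph trace because semifiniteness/lower-semicontinuity force the $C^*$-trace to take finite values on the (compact) vertex projections and faithfulness forces $\delta(v)>0$. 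This is essentially the $C^*$-side mirror of our Propositions \ref{gauge_invariant} and \ref{canonical}. Finally, the last sentence of the statement — that for a faithful, semifinite, lower semicontinuous $C^*$-trace, satisfying (GI) is equivalent to being gauge invariant in the sense of Definition \ref{definition_canonical} — follows because such a $C^*$-trace is $\Cset$-linear, so (GI) (the scalar relation $t(S_pS_q^*)=z^{|p|-|q|}t(S_pS_q^*)$) is literally condition (1) of Definition \ref{definition_canonical} transcribed to $C^*(E)$, and by the argument just given this forces canonicity, which is exactly gauge invariance over $\Cset$; the reverse implication is immediate since the gauge action scales $S_pS_q^*$ by $z^{|p|-|q|}$.

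The main obstacle I expect is the dropping of the row-finiteness hypothesis in adapting \cite[Proposition 3.9]{Pask-Rennie}: one must check carefully that at an infinite emitter $v$, condition (P)$_\delta$ (rather than an equality) is exactly what is needed for the candidate $C^*$-trace to be well-defined, positive, and lower semicontinuous, and that no extra finiteness is silently used when passing between the algebraic relations in $L_{\Cset}(E)$ and the $C^*$-completion. Modulo that bookkeeping, everything reduces to chaining the bijections $\delta \leftrightarrow t_\delta$ (Theorem \ref{correpondence}), canonical $\Leftrightarrow$ gauge invariant over $\Cset$ (Proposition \ref{gauge_invariant}), and the gauge-invariant uniqueness theorem on the $C^*$-side.
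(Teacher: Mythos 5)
Your proposal is correct and follows essentially the same route as the paper: the bijection between (1) and (2) comes from Theorem \ref{correpondence} together with the equivalence of canonical and gauge invariant over a characteristic-zero field (Proposition \ref{gauge_invariant}), the bijection with (3) comes from adapting \cite[Proposition 3.9]{Pask-Rennie}, and the final sentence is proved by the same restrict-then-extend argument. The only difference worth noting is that where you propose to re-verify the analytic estimates at infinite emitters and argue surjectivity onto (3) via the fixed-point algebra, the paper is more surgical: it observes that the proof of \cite[Lemma 3.2]{Pask-Rennie} never uses row-finiteness (giving the restriction direction), and that the sole use of row-finiteness in the proof of \cite[Proposition 3.9]{Pask-Rennie} is the invocation of the Gauge Invariant Uniqueness Theorem, which it replaces by the countable-graph version of \cite[Theorem 2.1]{Gauge_Inv_Thm_paper}.
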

\begin{proof}
Since the complex-conjugate involution is positive definite,
the sets (1) and (2) are in a bijective correspondence by Theorem
\ref{correpondence}. 

In \cite{Pask-Rennie}, $E$ is assumed to be countable
and row-finite. By \cite[Lemma 3.2]{Pask-Rennie}, every semifinite
$C^*$-trace on $C^*(E)$ is such that the trace of an element of $L_{\Cset}(E)$
is finite but the assumption that $E$ is
row-finite is not used it the proof. Thus, every $C^*$-trace as in (3)
restricts to a graph trace as in (1) by the proof of \cite[Lemma
3.2]{Pask-Rennie}. 

Thus, it remains to show that every trace as in (2) extends to a
trace as in (3). The proof
of  \cite[Proposition 3.9]{Pask-Rennie} shows this claim 
for $E$ countable and row-finite. The assumption that $E$ is row-finite is used
only when invoking the Gauge Invariant Uniqueness Theorem for row-finite
graphs from \cite{Gauge_Inv_Thm_row_finite}. This theorem has been shown for
countable graphs in \cite[Theorem 2.1]{Gauge_Inv_Thm_paper} so we need to
require just that $E$ is countable. 
The proof of \cite[Proposition 3.9]{Pask-Rennie}
shows that $t$ satisfies condition (GI). However,
since $t$ is canonical on $L_{\Cset}(E)$ and $\Cset$ has
characteristic zero, $t$ is gauge invariant by Proposition
\ref{gauge_invariant}. 

To prove the last sentence of this corollary, it is sufficient to prove that a
faithful, semifinite, lower semicontinuous,
$C^*$-trace on $C^*(E)$ which satisfies (GI) is canonical. If $t$ is such
a trace, the restriction of $t$ on the vertices is a faithful graph
trace $\delta$. The extension of $\delta$ to $L_{\Cset}(E)$ is a canonical trace
whose extension to $C^*(E)$ is $t.$ Thus, $t$ is canonical. 
\end{proof}

\section{Cohn-Leavitt algebras and directly finite Leavitt path algebras}
\label{section_finite}

In this section, we characterize directly finite Leavitt path algebras
as exactly those Leavitt path algebras $L_K(E)$ for which $E$ is a no-exit graph
(Theorem \ref{no-exit}). The proof of this characterization involves
consideration of Cohn-Leavitt algebras, the algebraic counterparts of relative
graph $C^*$-algebras, for which we also formulate all our previous results.
Lastly, we compare the classes of
locally noetherian, directly finite and those Leavitt
path algebras which admit a faithful trace.  

Recall that a unital ring is {\em directly (or Dedekind) finite}
if $xy=1$ implies that $yx=1$ for all $x$ and $y$. The involutive version of
this definition is the following: a ring is {\em finite} if
$xx^*=1$ implies $x^*x=1$ for all $x.$ This terminology comes from
operator theory and should not be confused with rings having
finite cardinality. In the rest of the paper, when we refer to a $*$-ring or
a $*$-algebra being finite, we assume the finiteness in this sense.  

We adapt finiteness and direct finiteness to 
non-unital rings  with local units. Recall that a ring $R$ has local units
if for every finite set
$x_1,\ldots, x_n\in R $ there is an idempotent $u$ such that $x_iu=ux_i=x_i$
for all $i=1,\ldots, n$.

\begin{definition}
A ring with local units $R$ is said to be {\em directly finite} if for every
$x,y\in R$ and an idempotent element $u\in R$ such that $xu=ux=x$ and 
$yu=uy=y$, we have that 
\[xy=u\mbox{ implies }yx=u.\]

A $*$-ring with local units $R$ is said to be {\em finite} if for every
$x\in R$ and an idempotent $u\in R$ such that $xu=ux=x,$ we have that 
\[xx^*=u\mbox{ implies }x^*x=u.\] Condition $xx^*=u$ implies that  $u$ is a
projection (selfadjoint idempotent) since $u^*=(xx^*)^*=xx^*=u.$ Thus, 
$x^*u=ux^*=x^*$ as well. 
\end{definition} 

If $R$ is a unital, directly finite ring, then it is directly finite in
the locally-unital
sense as well. Indeed, assuming that $xy=u$ for an idempotent element
$u$ with $xu=ux=x$ and $yu=uy=y$ we have that
$(x+1-u)(y+1-u)=xy+1-u=1.$ This 
implies that $1=(y+1-u)(x+1-u)=yx+1-u$ and from this it follows that 
$yx=u.$ Similarly, if $R$ is a unital, finite $*$-ring, then it is finite in
the locally-unital sense as well.

The fact that the existence of a faithful trace on a (unital)
von Neumann algebra implies its finiteness is well known and widely used. The
arguments proving this
fact easily generalize to any unital $*$-ring with a faithful trace. We note
this fact for locally unital rings. In fact, as the next proposition shows,
a more general claim holds: any ring with local units and
a trace which is injective on idempotents is directly finite.  

\begin{proposition}
If $R$ is a ring with local units and there is a trace on
$R$ which is injective on idempotent elements, then $R$ is directly finite. 

If $R$ is a $*$-ring with local units and there is a trace
on $R$ which is injective on projections, then $R$ is finite. In particular, a
$*$-ring with local units and a faithful trace is finite.   
\label{faithful_implies_finite}
\end{proposition}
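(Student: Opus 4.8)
The plan is to produce, in each case, an idempotent (respectively a projection) on which the trace cannot be distinguished from $u$, and then to invoke the injectivity hypothesis; both halves reduce to a couple of short identities, and the only real subtlety is hidden in the word ``faithful'' in the last sentence.

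\medskip

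\emph{Direct finiteness.} Let $t$ be a trace on $R$ injective on idempotents, let $u$ be an idempotent, and let $x,y\in R$ satisfy $xu=ux=x$, $yu=uy=y$ and $xy=u$. First I would check that $e:=yx$ is idempotent: $e^2=(yx)(yx)=y(xy)x=yux=yx=e$, using $ux=x$. Centrality of $t$ gives $t(e)=t(yx)=t(xy)=t(u)$. Since $e$ and $u$ are both idempotents and $t$ is injective on idempotents, $e=u$, i.e.\ $yx=u$. Hence $R$ is directly finite. (The local-unit hypothesis is not used directly in the argument; it only makes the notion of direct finiteness meaningful.)

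\medskip

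\emph{Finiteness.} Now let $R$ be a $*$-ring, $t$ a trace injective on projections, $u$ an idempotent with $xu=ux=x$, and $xx^*=u$. As observed in the Definition preceding the proposition, $u$ is then a projection and $x^*u=ux^*=x^*$. Put $f:=x^*x$. Then $f^*=f$, and $f^2=x^*(xx^*)x=x^*ux=x^*x=f$ (using $ux=x$), so $f$ is a projection. Centrality gives $t(f)=t(x^*x)=t(xx^*)=t(u)$, so $f=u$ by injectivity on projections, i.e.\ $x^*x=u$; thus $R$ is finite.

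\medskip

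\emph{The ``in particular'' and the main obstacle.} A faithful trace need not be injective on all projections --- for example the standard trace on $M_2(\Cset)$ is faithful yet identifies the two orthogonal rank-one projections --- so the final sentence is not a literal instance of the previous one, and I expect this to be the one point needing care. The fix is to notice that in the finiteness argument $f$ sits \emph{under} $u$: from $x^*u=ux^*=x^*$, $ux=x$ and $xu=x$ one gets $uf=(ux^*)x=x^*x=f$ and $fu=x^*(xu)=x^*x=f$, hence $u-f$ is selfadjoint with $(u-f)^2=u-uf-fu+f=u-f$, so $u-f$ is a projection and in particular $u-f\ge 0$. Since $t(u-f)=t(u)-t(f)=0$ and $t$ is faithful, $u-f=0$, i.e.\ $x^*x=u$, and $R$ is finite. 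So the argument is the same as before except that the inequality $u-f\ge 0$ must be produced before faithfulness is applied, since faithfulness alone says nothing about two unrelated projections of equal trace.
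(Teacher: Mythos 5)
Your proof is correct and is essentially the paper's argument: in both, the key point is that $yx$ (resp.\ $x^*x$) is an idempotent (resp.\ projection) whose trace equals $t(u)$ by centrality of $t$. The only real difference is which pair of idempotents is fed to the injectivity hypothesis: you compare $yx$ with $u$, while the paper compares $u-yx$ with $0$. That choice only matters for the final sentence, and your discussion there is a genuine gain in precision: as your $M_2(\Cset)$ example shows, a faithful trace need not be injective on all projections, so the ``in particular'' is not a literal instance of the second sentence; it holds because the proof only ever needs to know that the sole projection of trace zero is zero, which faithfulness supplies since a projection is positive. Your patch --- verifying that $u-x^*x$ is a projection, hence $\geq 0$, before invoking faithfulness --- is exactly the right fix and coincides with what the paper's own choice of comparing $u-yx$ with $0$ accomplishes implicitly.
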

\begin{proof}
Let $R$ be a ring with local units, $t$ a trace on
$R$ which is injective on idempotents, and let $x,y$ be in $R$ such that
$xu=ux=x$ and $yu=uy=y$ for some idempotent $u.$ If $xy=u,$ then $u-yx$ is an
idempotent since $(u-yx)(u-yx)=u-yx-yx+yxyx=u-yx-yx+yux=u-yx.$ 
Then 
$t(u-yx)=t(xy-yx)=0$ which implies that $u-yx=0$ since $t$ is
injective on
idempotents. Thus $yx=u.$

The second sentence is proven analogously and the third is a consequence of
the second. 
\end{proof}

Note that $L_K(E)$ is a ring with local units. Indeed for any $x_i$ in $L_K(E),$
$i=1,\ldots, n$
which can be represented using paths $p_{ij}, q_{ij}$ and $a_{ij}\in K,
j=1,\ldots, n_i$ as
$x_i=\sum_{j=1}^{n_i} a_{ij} p_{ij}q_{ij}^*,$ we have that the
sum $u$ of all vertices that are sources of all paths $p_{ij}$ and $q_{ij}$ for
all
$i=1,\ldots, n$ and $j=1,\ldots, n_i$ is an idempotent with $x_iu=x_i=ux_i.$ 

The direct finiteness of a Leavitt path algebra forces the underlying graph to
be no-exit. This has been shown to hold in \cite[Proposition
3.1]{GonzaloLia} for Leavitt path algebras of finite graphs. The proof of
part (6) of \cite[Proposition 29]{Zak_paper} shows this claim for any Leavitt
path algebra but since \cite[Proposition 29 (6)]{Zak_paper} is worded in a
different set up, we list the proof below. 
\begin{proposition}
If $L_K(E)$ is (directly) finite, then $E$ is no-exit.
\label{finite_implies_no-exit}
\end{proposition}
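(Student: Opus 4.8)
The plan is to prove the contrapositive: if $E$ is \emph{not} no-exit, then $L_K(E)$ is not (directly) finite. So suppose $E$ has a cycle with an exit, i.e.\ a cycle $e_1\ldots e_n$ together with an edge $f$ such that $\so(f)=\so(e_i)$ and $f\neq e_i$ for some $i$. Since every cyclic rotation $e_ie_{i+1}\ldots e_ne_1\ldots e_{i-1}$ of a cycle is again a cycle, I may assume $i=1$. Set $v=\so(e_1)=\ra(e_n)$ and $c=e_1\ldots e_n$, so that $c$ is a closed path based at $v$ and $f\in\so^{-1}(v)$ with $f\neq e_1$.

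The heart of the argument is to show that $c$ witnesses the failure of finiteness: $c^*c=\ra(c)=v$ while $cc^*\neq v$. The first identity follows by repeatedly applying (CK1) and (E1) to $e_n^*\ldots e_1^*e_1\ldots e_n$. For the inequality, I would left-multiply $cc^*$ by $f^*$: since $f\neq e_1$, axiom (CK1) gives $f^*e_1=0$, hence $f^*c=0$ and therefore $f^*(cc^*)=0$; on the other hand $f^*v=f^*\so(f)=f^*$ by (E2), and $f^*\neq 0$ because $f^*f=\ra(f)\neq 0$. Since $f^*(cc^*)=0$ and $f^*v=f^*\neq 0$, we conclude $cc^*\neq v$.

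It then remains to read off the conclusion from the definitions. The vertex $v$ is an idempotent, and $vc=cv=c$, $vc^*=c^*v=c^*$ by (E1) applied to the closed paths $c$ and $c^*$. Taking $x=c^*$ and $y=c$ we have $xv=vx=x$, $yv=vy=y$, $xy=c^*c=v$ but $yx=cc^*\neq v$, so $L_K(E)$ is not directly finite; taking $x=c^*$ alone, $xx^*=c^*c=v$ and $x^*x=cc^*\neq v$, so $L_K(E)$ is not finite as a $*$-ring either. In both readings of ``(directly) finite'' the contrapositive is established.

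I do not expect a genuine obstacle here: the whole argument uses only the relations (V), (E1), (E2), (CK1) and never invokes (CK2), so $v$ is allowed to be an infinite emitter and the statement holds for arbitrary graphs, not just row-finite ones. The only points requiring a little care are the reduction, via cyclic rotation, to the case where the exit occurs at the base vertex of the cycle, and the bookkeeping checks that $f^*\neq 0$ and that the local-unit relations $vc=cv=c$, $vc^*=c^*v=c^*$ hold — all immediate from the axioms.
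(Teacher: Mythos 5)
Your proof is correct and takes essentially the same route as the paper: both exhibit the cycle $c$ (the paper's $p$) as the witness, compute $c^*c=v$ via (CK1), and use the exit edge together with (CK1) to see that $cc^*\neq v$ (the paper phrases this as the contradiction $e^*=e^*pp^*=0$, which is the same computation as your $f^*(cc^*)=0\neq f^*v$). The extra summand $(1-\delta_{v,w})w$ appearing in the paper's choice of $x$ and $u$ is inessential, and your streamlined contrapositive, with the local-unit conditions checked explicitly, is fine.
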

\begin{proof}
Since direct finiteness implies finiteness, it is sufficient to show
the claim assuming that $L_K(E)$ is finite. In this case, assume that $E$ has a
cycle $p$ with an exit
$e.$ We also may assume that $\so(p)=\ra(p)=\so(e)$, and we denote this
vertex by $v$. Let $w =\ra(e),$ $x = p+(1-\delta_{v,w})w$ and
$u=v+(1-\delta_{v,w})w.$ Then we have that $xu=ux=x$ and that 
$x^*x  = p^*p + (1-\delta_{v,w})w = v+(1-\delta_{v,w})w=u$.
By finiteness,
we then have that $ v+(1-\delta_{v,w})w=u = xx^* = pp^*+(1-\delta_{v,w})w$.
Hence, $v = pp^*$. But then $0 = e^*pp^* = e^*v=e^*$ which is a
contradiction. Thus, $p$ cannot have an exit.
\end{proof}

Our goal is to prove that the converse of Proposition
\ref{finite_implies_no-exit} holds. This has been proven for Leavitt path
algebras of finite graphs in \cite[Theorem 3.3]{GonzaloLia}. Thus, if we can 
``localize'' our main claim, i.e. reduce the consideration of the general
case to a Leavitt path algebra of a finite subgraph and then use \cite[Theorem
3.3]{GonzaloLia}, then we would achieve our goal. In particular, assuming that a
graph $E$ is no-exit and 
considering $x,y\in L_K(E)$ such that $xy=u$ for some local
unit $u$, we aim to consider a finite subgraph
$F$ generated by the vertices and edges of just those paths that appear in
representations of $x,y$ and $u$. The problem is that the subgraph $F$ defined
in this way may not be complete in the sense of \cite[Definition
9.7]{Abrams_Tomforde} and so $L_K(F)$ may not be a subalgebra of $L_K(E).$
However, we show that this impediment can be avoided by considering Cohn-Leavitt
algebras of \cite{Ara_Goodearl}. Namely, we can consider appropriate finite
subgraph $F$ such that the Cohn-Leavitt algebra of $F$ is a subalgebra of
$L_K(E)$ and we can adapt \cite[Theorem 3.3]{GonzaloLia} to Cohn-Leavitt
algebras of finite graphs. This approach requires us to recall the definition of
Cohn-Leavitt algebras and demonstrate some preliminaries. 

Cohn-Leavitt algebras are obtained by requiring the (CK2) axiom to hold just for
a portion of regular vertices, not necessarily all of them. More precisely, if
$S$ is a subset of regular
vertices, the {\em Cohn-Leavitt algebra $CL_K(E,S)$ of $E$ and $S$ over $K$}  is
a free $K$-algebra
generated by the sets $E^0\cup E^1\cup \{e^*\ |\ e\in E^1\}$
with relations (V), (E1), (E2), (CK1) and
\begin{itemize}
\item[(SCK2)] $v=\sum_{e\in \so^{-1}(v)} ee^*,$ for every vertex $v\in S$.
\end{itemize}
For the rest of the paper, $R(E)$ denotes the set of the regular vertices of $E$
and $S$ a subset of $R(E).$ 
If $S$ is empty, the Cohn-Leavitt algebra $CL_K(E,S)$ is a Cohn path algebra and
we write
$CL_K(E, \emptyset)$ as $C_K(E).$  If
$S$ is equal to $R(E)$ then $CL_K(E,S)$ is a Leavitt path algebra and we write
$CL_K(E, R(E))$ as $L_K(E).$   

The $C^*$-analog of Cohn-Leavitt algebras preceded the consideration of
Cohn-Leavitt
algebras. In \cite{Muhly_Tomforde}, Muhly and Tomforde introduced the {\em
relative graph $C^*$-algebra} $C^*(E,S)$ of a graph $E$ and $S\subseteq R(E)$ as
the $C^*$-algebra generated by a universal Cuntz-Krieger $(E,S)$-family,  
obtained by replacing the (CK2) axiom of a Cuntz-Krieger $E$-family 
by the (SCK2) axiom (\cite[Definition 3.5]{Muhly_Tomforde}). In
\cite{Ara_Goodearl}, Cohn-Leavitt algebras are introduced for a more general
class of graphs, called separated graphs, than those considered in this paper.
The graphs considered in this paper correspond to those from \cite{Ara_Goodearl}
with trivial separation.

If $E$ is a countable graph, \cite[Theorem 3.7]{Muhly_Tomforde} shows that the
relative graph $C^*$-algebra $C^*(E,S)$ is canonically isomorphic to the graph
$C^*$-algebra $C^*(E_S)$ of a suitable graph $E_S$. In the paragraph preceding
Lemma \ref{star_iso}, we review this construction and adapt it to show that
$CL_K(E, S)$ is isomorphic to $L_K(E_S)$ for any graph $E$. Thus, the class of
Cohn-Leavitt algebras is not larger than the class of Leavitt path algebras as
it first may seem. Still, considering Cohn-Leavitt algebras is an elegant way to
unite considerations of both Cohn path and Leavitt path algebras. Because of
this, we also formulate the results of previous sections in terms of
Cohn-Leavitt algebras. As a consequence, each results is readily applicable to a
Cohn path or any other Cohn-Leavitt algebra without referring to the
construction of the graph $E_S$ or the isomorphism $CL_K(E,S)\cong L_K(E_S).$ 

Using relations (V), (E1), (E2) and (CK1), every nonzero element of $CL_K(E, S)$
can be represented as a finite $K$-linear combination of elements of the form 
$pq^*$ where $p$ and $q$ are paths. Thus, the involution $*$ from $K$ extends to
an involution of $CL_K(E,S)$ by $(\sum_{i=1}^n a_ip_iq_i^*)^* =\sum_{i=1}^n
a_i^*q_ip_i^*$ for paths $p_i$ and $q_i$ and $a_i\in K, i=1,\ldots, n$, making 
$CL_K(E,S)$ an involutive $K$-algebra.

We adapt Theorems \ref{positive} and \ref{faithful} to Cohn-Leavitt algebras
now. 

\begin{proposition}
Let $R$ be an involutive $K$-algebra and $t$ a canonical,
$K$-linear, $R$-valued trace on $CL_K(E,S)$.
The trace $t$ is positive if and only if  
\begin{enumerate}
\item[(P)] $\;\;\;t(v-\sum_{e\in I} \ra(e))\geq 0\;$ for all vertices $v$ and  finite subsets
$I$ of $\so^{-1}(v)$. 
\end{enumerate}

If $R$ is positive definite, then $t$ is faithful
if and only if (P), (F) and (SF) hold.
\begin{enumerate}
\item[(F)]  $t(v)> 0$ for all vertices $v.$
\item[(SF)]  $t(v-\sum_{e\in \so^{-1}(v)}\ra(e))> 0\;$ for all regular
vertices $v$ not in $S.$
\end{enumerate} 
\label{CL_positive_faithful}
\end{proposition}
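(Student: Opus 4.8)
The plan is to mimic the proofs of Theorems \ref{positive} and \ref{faithful}, transferring them to the Cohn--Leavitt setting via the isomorphism $CL_K(E,S)\cong L_K(E_S)$ sketched in the paragraph preceding Lemma \ref{star_iso}. Under this isomorphism, a canonical, $K$-linear, $R$-valued trace $t$ on $CL_K(E,S)$ corresponds to a canonical, $K$-linear, $R$-valued trace $\widetilde t$ on $L_K(E_S)$, and by Proposition \ref{canonical} (applied to both $E$ and $E_S$) both are determined by their restrictions to the vertices. The graph $E_S$ is obtained from $E$ by adding, for each regular vertex $v\notin S$, a new sink $v'$ together with an edge from $v$ to $v'$; a vertex $v$ of $E$ that is regular but not in $S$ becomes an infinite emitter or an emitter-with-extra-edge in $E_S$ in such a way that $v-\sum_{e\in\so^{-1}_E(v)}\ra(e)$ maps to $\ra$ of the new edge $v\to v'$, i.e. to the new vertex $v'$. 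Thus the claim will follow once I translate conditions (P), (F), (SF) for $t$ on $CL_K(E,S)$ into conditions (P), (F) for $\widetilde t$ on $L_K(E_S)$ and invoke Theorems \ref{positive} and \ref{faithful}.

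First I would set up the correspondence between trace values precisely: $\widetilde t(u)=t(u)$ for every vertex $u$ of $E$ (these are vertices of $E_S$ too), and $\widetilde t(v')=t(v)-\sum_{e\in\so^{-1}_E(v)}\ra(e))$ for each newly added sink $v'$ (attached to a regular vertex $v\notin S$), this being forced by $(\mathrm{CK2})_\delta$ applied in $E_S$ at $v$ together with the shape of $\so^{-1}_{E_S}(v)$. Then for the positivity statement: condition (P) for $\widetilde t$ on $L_K(E_S)$ ranges over all vertices $w$ of $E_S$ and all finite $I\subseteq\so^{-1}_{E_S}(w)$. For $w$ a sink of $E_S$ the condition is vacuous (empty $I$, giving $\widetilde t(w)\ge 0$, which is $t$(old vertex)$\ge0$ or $t(v)-\sum\ra(e))\ge0$ — both instances of (P) for $t$ on $E$). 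For $w$ an old vertex, $\so^{-1}_{E_S}(w)$ consists of the old edges out of $w$ together with (if $w$ was regular and not in $S$) the one new edge $w\to w'$; running over finite subsets $I$ recovers exactly the inequalities in (P) for $t$ on $E$, plus the one with $\ra(w')$ which, since $\widetilde t(w')=t(w)-\sum_{e}\ra(e))$, reduces again to an instance of (P) for $t$. So (P) for $\widetilde t$ on $E_S$ is equivalent to (P) for $t$ on $E$, and Theorem \ref{positive} gives the first assertion.

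For faithfulness, by Theorem \ref{faithful} $\widetilde t$ is faithful iff (P) and (F) hold for $\widetilde t$ on $E_S$. Condition (F) for $\widetilde t$ demands $\widetilde t(w)>0$ for \emph{every} vertex $w$ of $E_S$. For $w$ an old vertex this is exactly (F) for $t$; for $w=v'$ a new sink this is exactly $t(v-\sum_{e\in\so^{-1}_E(v)}\ra(e))>0$, which is precisely (SF). Hence (P)$\wedge$(F) for $\widetilde t$ on $E_S$ is equivalent to (P)$\wedge$(F)$\wedge$(SF) for $t$ on $E$, and faithfulness of $\widetilde t$ on $L_K(E_S)$ is equivalent, via the isomorphism $CL_K(E,S)\cong L_K(E_S)$ (which is a $*$-isomorphism, so it preserves positive elements and hence faithfulness of a trace), to faithfulness of $t$ on $CL_K(E,S)$. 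This uses that $R$ is positive definite exactly where Theorem \ref{faithful} needs it.

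The main obstacle I anticipate is bookkeeping around the construction of $E_S$: one must check carefully that the isomorphism $CL_K(E,S)\to L_K(E_S)$ sends the element $v-\sum_{e\in\so^{-1}_E(v)}\ra(e)$ to (a unit multiple of) the new vertex $v'$, equivalently to $ee^*$ for the new edge $e\colon v\to v'$, so that the value $\widetilde t(v')$ really equals $t(v-\sum_e\ra(e))$; and that $\so^{-1}_{E_S}(v)$ for an old vertex $v$ is correctly described (old out-edges plus possibly one new edge), so the enumeration of instances of (P) matches up without gaps. Alternatively, if one prefers to avoid the $E_S$-construction, one can repeat verbatim the inductive reduction arguments of Theorems \ref{positive} and \ref{faithful} inside $CL_K(E,S)$: the proof of Theorem \ref{positive} never uses (CK2), and the only place (CK2) enters the proof of Theorem \ref{faithful} is the step ``if $\{e_1,\dots,e_m\}=\so^{-1}(v)$ then $v=\sum e_ie_i^*$'', which in the Cohn--Leavitt setting must be replaced by: if $v\in S$ this still holds, while if $v$ is regular but $v\notin S$ one instead invokes (SF) to derive the needed contradiction — exactly as (F) was used for the proper-subset case. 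This second route is more self-contained and I would present it, keeping the $E_S$ remark as motivation; either way the substantive content is the observation that (SF) supplies precisely the missing positivity at regular vertices outside $S$.
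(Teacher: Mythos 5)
Your second route --- repeating the arguments of Theorems \ref{positive} and \ref{faithful} inside $CL_K(E,S)$, noting that (CK2) is never used in the positivity proof and that the only place it enters the faithfulness proof is the step where $t(v-\sum_{e\in I}ee^*)=0$ must force $v=\sum_{e\in I}ee^*$, with (SF) supplying the missing case of a regular vertex outside $S$ --- is exactly the paper's proof, and it is correct. One caveat on your motivational first route: your description of $E_S$ is wrong; the construction recalled before Lemma \ref{star_iso} does \emph{not} add an edge from $v$ to the new sink $v'$, but rather a parallel edge $e'$ from $\so(e)$ to $v'$ for every edge $e$ \emph{into} $v$, so the translation of the instances of (P) between $E$ and $E_S$ is more involved than you indicate (though the key identity $\widetilde t(v')=t(v-\sum_{e\in\so^{-1}(v)}ee^*)$ does hold, since $\phi(v')=v-\sum_{e\in\so^{-1}(v)}ee^*$). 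Since you present the direct route as the actual proof, this slip does not affect its validity.
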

\begin{proof}
Note that axiom (CK2) was not used in the proofs of Theorems \ref{positive}
and any of its preliminary results. Thus, the proof of
Theorem \ref{positive} directly transfers to the proof of the first
part of the claim. 

The proof of Theorem \ref{faithful} also directly carries over to
the proof of the second part of the claim except
for the following step of the proof which requires the use of (SF):
assuming that $t$ is such that (P), (F), and (SF)
hold, 
$$t(v-\sum_{e\in I} ee^*)=0\mbox{ implies }v=\sum_{e\in I} ee^*$$
for any vertex $v$ and any finite set $I\subseteq
\so^{-1}(v).$ To prove this step, note first that the claim trivially holds if
$v$ is a sink or $I$ is empty since the assumption $t(v)=0$ is false by (F).
Thus we can assume that $v$ is not a sink and $I$ is nonempty. In this case,
assume that $t(v-\sum_{e\in I} ee^*)=0.$ Condition (SF) implies that 
$v\notin R(E)\setminus S$ or $I\subsetneq \so^{-1}(v).$ With these
restrictions, we either have $I=\so^{-1}(v)$ and
$v\in S$ or $I\subsetneq \so^{-1}(v).$ If $v\in S$ and $I=\so^{-1}(v),$ 
$v=\sum_{e\in I} ee^*$ by (SCK2). If $I\subsetneq
\so^{-1}(v),$ there is an edge $f\in \so^{-1}(v)\setminus I.$ The
element $v-ff^*-\sum_{e\in I} ee^*$ is selfadjoint and is easily seen to be
idempotent using just (V), (E1), (E2) and (CK1). So, it is positive. Thus, we
have that 
$$t(v)\geq t(ff^*)+\sum_{e\in I}t(ee^*)>\sum_{e\in I}t(ee^*)$$ 
by
positivity of $t$ and condition (F). 
This contradicts $0=t(v-\sum_{e\in I} ee^*)=t(v)-\sum_{e\in I} t(ee^*)$ so
the case $I\subsetneq
\so^{-1}(v)$ cannot happen. Thus  $v\in S$ and $I=\so^{-1}(v)$ in which case   
 $v=\sum_{e\in I} ee^*.$ The rest of the proof of Theorem
\ref{faithful} directly transfers to the proof of the present claim. 
\end{proof}

The proofs of Proposition \ref{canonical} and  Theorem \ref{correpondence} can
also be transfered directly to Cohn-Leavitt setting after 
adjusting the definition of a graph trace as follows. 

\begin{definition}
If $R$ is a ring, an $R$-valued {\em graph
trace on $E$ relative to $S$} is a map $\delta: E^0\to R$ such that 
\begin{enumerate}
\item[(SCK2)$_\delta$] $\;\delta(v) =\sum_{e\in \so^{-1}(v)} \delta(\ra(e))\;$
for all vertices $v$ in $S.$ 
\end{enumerate}
A graph trace on $E$ relative to $R(E)$ is simply called a {\em graph
trace} on $E.$

If $R$ is an involutive $K$-algebra and $\delta$ a graph trace on $E$ relative
to $S$, then $\delta$ is {\em positive} if condition (P)$_\delta$ holds. If
$\delta$ is positive, then $\delta$ is {\em faithful} if (F)$_\delta$ and
(SF)$_\delta$ hold for  
\begin{enumerate}
\item[(SF)$_\delta$]  $\delta(v)>\sum_{e\in \so^{-1}(v)}\delta(\ra(e))\;$ for
all regular vertices $v$ not in $S.$
\end{enumerate}
\end{definition}

\begin{proposition}
Proposition \ref{canonical} and  Theorem \ref{correpondence} hold for
$CL_K(E,S)$ after every appearance of ``graph trace'' is replaced by ``graph
trace relative to $S$''. 
\label{CL_correspondence}
\end{proposition}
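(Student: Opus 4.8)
The plan is to show that the proofs of Proposition~\ref{canonical} and Theorem~\ref{correpondence}, given earlier for Leavitt path algebras, go through essentially verbatim for $CL_K(E,S)$ once ``graph trace'' is read as ``graph trace relative to $S$''. The only structural facts about $L_K(E)$ used in those proofs are: (i) every element is a $K$-linear combination of elements $pq^*$ of $G_E$; (ii) the relations (V), (E1), (E2), (CK1) hold; and (iii) the characterization of central maps on $G_E\cup\{0\}$ from \cite[Proposition~19]{Zak_paper} and the extension result \cite[Theorem~28]{Zak_paper}. All of these remain available for $CL_K(E,S)$: property (i) was noted in the paragraph introducing the involution on $CL_K(E,S)$, property (ii) is part of the definition of a Cohn--Leavitt algebra, and for (iii) one observes that the cited results of \cite{Zak_paper} concern central maps on the monoid-like set $G_E$ and its relations (V), (E1), (E2), (CK1) only — they never invoke (CK2) — so they apply to $CL_K(E,S)$ without change.

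First I would repeat the construction in the proof of Proposition~\ref{canonical}: given an $R$-valued graph trace $\delta$ on $E$ relative to $S$, define $t_\delta$ on $G_E$ by $t_\delta(pq^*)=\delta_{p,q}\delta(\ra(p))$ and $t_\delta(0)=0$; by \cite[Theorem~28]{Zak_paper} this is central and extends uniquely to a canonical, $K$-linear, $R$-valued trace on $CL_K(E,S)$, whose restriction to $E^0$ is $\delta$. Conversely, if $t$ is a canonical, $K$-linear, $R$-valued trace on $CL_K(E,S)$, its restriction $\delta$ to $E^0$ satisfies $(\mathrm{SCK2})_\delta$: for $v\in S$ one computes, using $(\mathrm{SCK2})$, (CK1), and centrality of $t$, that $\delta(v)=t(\sum_{e\in\so^{-1}(v)}ee^*)=\sum_{e\in\so^{-1}(v)}t(e^*e)=\sum_{e\in\so^{-1}(v)}\delta(\ra(e))$ — exactly the chain of equalities in the original proof, now summed only over $v\in S$. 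Canonicity then forces $t=t_\delta$ on $G_E$, hence on $CL_K(E,S)$. This gives the bijective correspondence $\tau$ of Proposition~\ref{canonical} in the relative setting.

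Next I would handle Theorem~\ref{correpondence}. The positivity half is immediate from Proposition~\ref{CL_positive_faithful}: a graph trace $\delta$ relative to $S$ is positive iff $(\mathrm{P})_\delta$ holds iff its canonical extension satisfies (P) iff (by Proposition~\ref{CL_positive_faithful}) that extension is positive; conversely a positive canonical trace satisfies (P), which restricts to $(\mathrm{P})_\delta$ on vertices. For the faithful half, with $R$ positive definite, I would invoke the faithfulness criterion of Proposition~\ref{CL_positive_faithful}: the canonical extension $t_\delta$ is faithful iff (P), (F), and (SF) hold, and these are precisely the vertex-level conditions $(\mathrm{P})_\delta$, $(\mathrm{F})_\delta$, $(\mathrm{SF})_\delta$ defining a faithful graph trace relative to $S$. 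Thus $\tau$ restricts to bijections between positive (resp.\ faithful) graph traces relative to $S$ and positive (resp.\ faithful) canonical $K$-linear $R$-valued traces on $CL_K(E,S)$.

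The one point needing genuine care — and the place I would expect an objection — is the appearance of the extra condition (SF)/$(\mathrm{SF})_\delta$, which had no analogue in the Leavitt case. This is forced by the step in the proof of Proposition~\ref{CL_positive_faithful} where one must deduce $v=\sum_{e\in\so^{-1}(v)}ee^*$ from $t(v-\sum_{e\in I}ee^*)=0$: in $L_K(E)$ this used (CK2) directly, but in $CL_K(E,S)$ it is valid only when $v\in S$, and (SF) is exactly what rules out the remaining case $v\in R(E)\setminus S$ with $I=\so^{-1}(v)$. Since that argument is already carried out inside the proof of Proposition~\ref{CL_positive_faithful}, nothing further is required here beyond citing it. Everything else in the proofs of Proposition~\ref{canonical} and Theorem~\ref{correpondence} is formula-by-formula identical, so the transfer is complete.
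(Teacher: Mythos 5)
Your proposal is correct and follows essentially the same route as the paper: adapt the proof of Proposition \ref{canonical} (and the underlying results of \cite{Zak_paper}, which never use (CK2)) by substituting (SCK2) and (SCK2)$_\delta$, and then derive the positive/faithful correspondence from Proposition \ref{CL_positive_faithful}, whose condition (SF) accounts for the regular vertices outside $S$. Your write-up is more detailed than the paper's two-sentence proof, but there is no substantive difference in approach.
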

\begin{proof}
By considering graph traces relative to $S$ instead of graph traces, using 
(SCK2) instead of
(CK2) and (SCK2)$_\delta$ instead of (CK2)$_\delta$, we obtain the proofs of
\cite[Theorem
28]{Zak_paper} and Proposition \ref{canonical} adjusted to 
Cohn-Leavitt algebras. As a consequence of this and
Proposition \ref{CL_positive_faithful}, Theorem \ref{correpondence},
adjusted appropriately, holds for
$CL_K(E,S).$ 
\end{proof}

Our next goal is to adapt the construction from \cite[Theorem
3.7]{Muhly_Tomforde} to show that any Cohn-Leavitt algebra $CL_K(E,S)$ is
$*$-isomorphic to the Leavitt path algebra $L_K(E_S)$ of a suitable graph
$E_S$ defined via $E$ and $S$. Recall that a homomorphism $f$ of $*$-rings is
said to be a $*$-homomorphism if $f(x^*)=f(x)^*$ for every $x$ in the domain and
that a $*$-isomorphism is an isomorphism which is also a $*$-homomorphism. Also
recall that the universal property of Leavitt path algebras states that if $R$
is a $K$-algebra which contains a set $\{a_v, b_e, c_{e^*} | v\in E^0, e\in
E^1\}$ such that $a_v, b_e, c_{e^*}$ satisfy axioms (V), (E1), (E2), (CK1), and
(CK2) (such set is called  a {\em Leavitt $E$-family}) then there is a unique
$K$-algebra homomorphism $f: L_K(E)\to R$ such that $f(v) = a_v, f(e) = b_e,$
and $f(e^*) = c_{e^*}$ for all $v\in E^0$  and $e\in E^1$ (see \cite[Remark
2.11]{Abrams_Tomforde} for example). The next lemma explores the requirements
for such homomorphism $f$ to be a $*$-homomorphism.

\begin{lemma} 
For every involutive $K$-algebra $R$ with a Leavitt $E$-family
$\{a_v, b_e, c_{e^*} | v\in E^0, e\in E^1\}$ such that $a_v^*=a_{v^*}$ and 
$b_e^*=c_{e^*},$ there is a unique
$K$-algebra $*$-homomorphism $f: L_K(E)\to R$ such
that $f(v) = a_v, f(e) = b_e,$ (thus $f(e^*) = c_{e^*}$) for all
$v\in E^0$  and $e\in E^1$. 
\label{universal_star} 
\end{lemma}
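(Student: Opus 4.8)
The plan is to obtain $f$ directly from the universal property of Leavitt path algebras recalled just above the lemma, and then to check separately that it is compatible with the two involutions. Since $\{a_v, b_e, c_{e^*}\mid v\in E^0,\,e\in E^1\}$ is assumed to be a Leavitt $E$-family, that universal property supplies a unique $K$-algebra homomorphism $f\colon L_K(E)\to R$ with $f(v)=a_v$, $f(e)=b_e$, and $f(e^*)=c_{e^*}$ for all $v\in E^0$ and $e\in E^1$. Uniqueness in the lemma is then immediate: a $*$-homomorphism sending $v\mapsto a_v$ and $e\mapsto b_e$ is in particular a $K$-algebra homomorphism with those values, hence coincides with $f$.

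So the only thing left to verify is that $f(x^*)=f(x)^*$ for all $x\in L_K(E)$. I would first do this for paths and ghost paths. For a vertex $v$, since $v^*=v$ and $a_v^*=a_{v^*}=a_v$ by hypothesis, one gets $f(v^*)=a_v=a_v^*=f(v)^*$. For a path $p=e_1\cdots e_n$ with $n\geq 1$, multiplicativity of $f$ gives $f(p)=b_{e_1}\cdots b_{e_n}$, and then, using $p^*=e_n^*\cdots e_1^*$, the hypothesis $b_e^*=c_{e^*}$, and the fact that $*$ reverses products in $R$,
\[
f(p^*)=c_{e_n^*}\cdots c_{e_1^*}=b_{e_n}^*\cdots b_{e_1}^*=(b_{e_1}\cdots b_{e_n})^*=f(p)^*.
\]

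For a general $x\in L_K(E)$ I would write $x=\sum_{i=1}^n a_i p_i q_i^*$ with $a_i\in K$ and $p_i,q_i$ paths (possible by (V), (E1), (E2), (CK1)) and compute both sides of $f(x^*)=f(x)^*$ on this fixed expression. On one hand $x^*=\sum_i a_i^*\,q_i p_i^*$, so by additivity of $f$ and the previous paragraph, $f(x^*)=\sum_i a_i^*\,f(q_i)\,f(p_i)^*$. On the other hand, since $R$ is a $*$-algebra we have $(ay)^*=a^*y^*$ for $a\in K$, $y\in R$, so $f(x)^*=\bigl(\sum_i a_i\,f(p_i)\,f(q_i)^*\bigr)^*=\sum_i a_i^*\,f(q_i)\,f(p_i)^*$; the two expressions agree. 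The only point deserving attention is that one must evaluate on a fixed representative $\sum_i a_i p_i q_i^*$ rather than trying to \emph{define} $f$ on such expressions, so that no well-definedness issue arises — but that is already guaranteed by the universal property and by the fact that $*$ is an honest involution on $L_K(E)$. Beyond this bookkeeping there is no real obstacle.
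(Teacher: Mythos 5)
Your proposal is correct and follows essentially the same route as the paper: obtain $f$ from the universal property and then verify $f(x^*)=f(x)^*$ by reducing, via additivity and $K$-linearity, to monomials $pq^*$. The only cosmetic difference is that the paper verifies the monomial case by a double induction on $|p|$ and $|q|$, whereas you get it directly from anti-multiplicativity of the involutions applied to the product $b_{e_1}\cdots b_{e_n}$, which is if anything slightly cleaner.
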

\begin{proof}
Since $\{a_v, b_e, c_{e^*} | v\in E^0, e\in E^1\}$ is a Leavitt
$E$-family, there is a unique
$K$-algebra homomorphism $f: L_K(E)\to R$ such that $f(v) = a_v, f(e) = b_e,$
and $f(e^*) =
c_{e^*}.$ We claim that under assumption that $a^*_v=a_v$ and
$b_e^*=c_{e^*},$ the map $f$ is a $*$-homomorphism. 

Since $f$ is additive and  $K$-linear, it is sufficient to prove that
$f(x^*)=f(x)^*$ if $x=pq^*$ where $p$ and $q$ are
paths with $\ra(p)=\ra(q).$ The condition $a^*_v=a_v$ proves this statement for 
$|p|=|q|=0$ and the condition $b_e^*=c_{e^*}$ implies that
$f(e)^*=f(e^*)$ for every edge $e.$  Assuming that the statement holds
for any path $p$ with $|p|<n$ and $q$ with $|q|=0,$ let us prove it if $p$ has
length $n$ and $q$ length 0. In this case $p=er$ for some edge $e$ and
path $r$ with $\ra(e)=\so(r)$ and $|r|<n$ so that $f(r)^*=f(r^*)$ by the
induction hypothesis and $pq^*=er.$ Thus
$f(pq^*)^*=f(er)^*=(f(e)f(r))^*=f(r)^*f(e)^*=f(r^*)f(e^*)=f(r^*e^*)=f((er)^*
)=f((pq^*)^*).$ 
 
Now, assuming the statement for $pq^*$ with $|q|<m$,
let us prove it for $pq^*$ with $|q|=m.$ In this case
$q=er$
for some edge $e$ and path $r$ with $\ra(e)=\so(r)$ and $|r|<m$ and so 
$f(pq^*)^*=f(pr^*e^*)^*=(f(pr^*)f(e^*))^*=f(e^*)^*f(pr^*)^*=f(e)f(rp^*)=
f(erp^*)=f((pr^*e^*)^*)=f((pq^*)^*).$ 
\end{proof}

We shall use Lemma \ref{universal_star} to show that a Cohn-Leavitt algebra
$CL_K(E,S)$ is $*$-isomorphic to the Leavitt path algebra $L_K(E_S)$ where
$E_S$ is the graph obtained from $E$ and $S$ as in 
\cite[Theorem 3.7]{Muhly_Tomforde}. First, we
recall the construction of $E_S$ from
\cite[Definition 3.6]{Muhly_Tomforde} and the map
$\phi$ defined on the vertices, edges and ghost edges of $E_S$ with values in
$CL_K(E,S)$ which creates a Leavitt $E_S$-family in $CL_K(E,S)$. 

Let $E^0_S=E^0\cup\{v' | v\in R(E)\setminus S\}$ and $E^1_S=E^1\cup \{e' | e\in
E^1$ with $\ra(e)\in R(E)\setminus S\}.$ The maps $\so$ and $\ra$
in $E_S$ are the same as in $E$ on $E^1$ and such that $\so(e')=\so(e)$ and
$\ra(e')=\ra(e)'$ for any added edge $e'.$ 

Define $\phi$ on the vertices of $E_S$ by
$\phi(v)=v$ if $v\notin R(E)\setminus S$,
$\phi(v)=\sum_{e\in \so^{-1}(v)} ee^*$ if $v\in R(E)\setminus S,$ and
$\phi(v')=v-\sum_{e\in \so^{-1}(v)} ee^*$ for $v\in R(E)\setminus S.$
Note that this automatically gives us $\phi(w)^*=\phi(w)$ for every vertex $w$
of $E_S.$
Define $\phi$ on the edges of $E_S$ by
$\phi(e)=e\phi(\ra(e))$ for $e\in E^1$ and
$\phi(e')=e\phi(\ra(e)')$ for $e\in
E^1$ such that $\ra(e)\in R(E)\setminus S.$ 
Moreover, 
define $\phi$ on the ghost edges of $E_S$ by
$\phi(f^*)=\phi(f)^*$ for every edge $f$ of $E_S$.

\begin{lemma}
\label{star_iso}
The map $\phi$ extends to a $*$-isomorphism $\phi: L_K(E_S)\cong CL_K(E,S).$ 
\end{lemma}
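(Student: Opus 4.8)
The plan is to produce $\phi$ from the universal property, check surjectivity by hand, and then exhibit an explicit two-sided inverse using the (analogous) universal property of Cohn--Leavitt algebras.

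First I would verify that the family $\{\phi(w),\phi(f),\phi(f^*)\mid w\in E_S^0,\ f\in E_S^1\}$ is a Leavitt $E_S$-family in $CL_K(E,S)$. Relations (V), (E1) and (CK1) are routine identities obtained from (V), (E1), (E2), (CK1) in $CL_K(E,S)$, from $\phi(w_1)\phi(w_2)=0$ for distinct vertices of $E_S$, and from $\phi(\ra(e))+\phi(\ra(e)')=\ra(e)$ whenever $\ra(e)\in R(E)\setminus S$; (E2) then follows by applying $*$, since every $\phi(w)$ is selfadjoint by construction. For (CK2) one identifies the regular vertices of $E_S$: each $v'$ is a sink, while a vertex $v\in E^0$ inherits its $E$-out-edges together with the primed copies $e'$ of those $E$-out-edges with range in $R(E)\setminus S$; for such regular $v$, pairing $e$ with $e'$ and using $\phi(e)\phi(e)^*+\phi(e')\phi(e')^*=e\bigl(\phi(\ra(e))+\phi(\ra(e)')\bigr)e^*=ee^*$ gives $\sum_{f\in\so_{E_S}^{-1}(v)}\phi(f)\phi(f)^*=\sum_{e\in\so_E^{-1}(v)}ee^*=\phi(v)$. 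Since in addition $\phi(f^*)=\phi(f)^*$ by definition, Lemma \ref{universal_star} provides a $K$-algebra $*$-homomorphism $\phi\colon L_K(E_S)\to CL_K(E,S)$. Surjectivity is then immediate: the image contains every vertex ($\phi(v)=v$, or $\phi(v)+\phi(v')=v$), every edge ($\phi(e)=e$, or $\phi(e)+\phi(e')=e\,\ra(e)=e$), and hence every ghost edge.

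For injectivity I would build the inverse directly. Set $A_v=v$ for $v\in E^0\setminus(R(E)\setminus S)$ and $A_v=v+v'$ for $v\in R(E)\setminus S$; set $B_e=e$ for $\ra(e)\notin R(E)\setminus S$ and $B_e=e+e'$ for $\ra(e)\in R(E)\setminus S$; and $C_{e^*}=B_e^*$ (vertices and edges here being those of $E_S$). One verifies that $\{A_v,B_e,C_{e^*}\}$ is a Cohn--Leavitt $(E,S)$-family in $L_K(E_S)$: (V), (E1) and (E2) follow from the defining relations of $L_K(E_S)$ and the orthogonality of distinct vertices; (CK1) reduces to the $E_S$-relations $e^*f=\delta_{e,f}\ra(e)$, $(e')^*f'=\delta_{e,f}\ra(e)'$ together with vanishing of the mixed products $e^*f'$, $(e')^*f$, giving $B_e^*B_f=\delta_{e,f}A_{\ra(e)}$; and for $v\in S$ the identity $e(e')^*=e'e^*=0$ (valid since $\ra(e)\ne\ra(e)'$) yields $B_eB_e^*=ee^*+e'(e')^*$, hence $\sum_{e\in\so_E^{-1}(v)}B_eB_e^*=\sum_{f\in\so_{E_S}^{-1}(v)}ff^*=v=A_v$ by (CK2) in $L_K(E_S)$, which is (SCK2). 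The universal property of $CL_K(E,S)$, together with its $*$-version (the evident analogue of Lemma \ref{universal_star}, or the remark that $x\mapsto\psi(x^*)^*$ is again an algebra homomorphism agreeing with $\psi$ on generators), then yields a $K$-algebra $*$-homomorphism $\psi\colon CL_K(E,S)\to L_K(E_S)$.

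Finally I would check $\phi\circ\psi=\mathrm{id}$ and $\psi\circ\phi=\mathrm{id}$ on generators. From $\phi(A_v)=v$ and $\phi(B_e)=e$ one gets $\phi\circ\psi=\mathrm{id}_{CL_K(E,S)}$. For $\psi\circ\phi$ the key point is that $\sum_{g\in\so_E^{-1}(\ra(e))}B_gB_g^*$ equals, by (CK2) in $L_K(E_S)$, the \emph{unprimed} vertex $\ra(e)$ of $E_S$, so that $\psi(\phi(\ra(e)))=\ra(e)$ and $\psi(\phi(\ra(e)'))=A_{\ra(e)}-\ra(e)=\ra(e)'$; consequently $\psi(\phi(e))=(e+e')\ra(e)=e$ and $\psi(\phi(e'))=(e+e')\ra(e)'=e'$, and likewise $\psi(\phi(v))=v$, $\psi(\phi(v'))=v'$, giving $\psi\circ\phi=\mathrm{id}_{L_K(E_S)}$. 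Hence $\phi$ is a bijective $*$-homomorphism, i.e. a $*$-isomorphism. The main obstacle is purely organizational: one must keep separate the several incarnations of a vertex $v\in R(E)\setminus S$ --- the vertex $v\in E_S^0$ (now regular), the new sink $v'$, and the old $v$ of $E$ (which $\psi$ sends to $v+v'$ and $\phi^{-1}$-data encodes as $\phi(v)+\phi(v')$) --- and in particular \emph{not} conflate $\psi(\phi(\ra(e)))=\ra(e)$ with $A_{\ra(e)}=\ra(e)+\ra(e)'$, since it is exactly this distinction that makes $\psi\circ\phi$ return the generator $e$ rather than $B_e=e+e'$.
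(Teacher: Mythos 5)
Your proof is correct, and the first half (building $\phi$ via the Leavitt $E_S$-family and Lemma \ref{universal_star}, then checking surjectivity on generators) coincides with the paper's argument. Where you genuinely diverge is injectivity: the paper notes that $\phi$ is a graded homomorphism that does not kill any vertex and invokes the Graded Uniqueness Theorem (Tomforde, Theorem 4.8), even pausing to remark that the countability hypothesis in that reference is not actually used; you instead construct an explicit inverse $\psi$ by exhibiting the Cohn--Leavitt $(E,S)$-family $A_v$, $B_e$, $C_{e^*}$ inside $L_K(E_S)$ and verifying $\phi\circ\psi=\mathrm{id}$ and $\psi\circ\phi=\mathrm{id}$ on generators. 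Your route is more elementary and self-contained --- it needs only the universal properties and the defining relations, and it makes the isomorphism completely explicit in both directions --- at the price of more bookkeeping: checking (V), (E1), (E2), (CK1), (SCK2) for the family $\{A_v,B_e,C_{e^*}\}$ and the two composite identities, where the crucial points are the vanishing of the mixed products $e(e')^*$ and the distinction between $\psi(\phi(\ra(e)))=\ra(e)$ (the unprimed $E_S$-vertex, recovered via (CK2) in $L_K(E_S)$) and $A_{\ra(e)}=\ra(e)+\ra(e)'$, which you handle correctly. The paper's route is shorter but leans on an external uniqueness theorem; yours also yields surjectivity for free from $\phi\circ\psi=\mathrm{id}$, making your separate generator-by-generator surjectivity check redundant though harmless.
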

\begin{proof}
It can be directly checked that the map $\phi$ defined as above is
such that the images $\phi(w)$, $\phi(f),$ and
$\phi(f^*)$ for $w\in E^0_S$ and $f\in E^1_S$ satisfy (V), (E1), (E2), (CK1),
and (CK2). Since  $\phi(w)^*=\phi(w)$ for $w\in E^0_S$ and
$\phi(f^*)=\phi(f)^*$ for $f\in E^1_S$, the set $\{\phi(w), \phi(f),
\phi(f^*) \ |\ w\in E^0_S, 
f\in E^1_S\}$ satisfies the assumptions of Lemma \ref{universal_star} and
so $\phi$ uniquely extends to a $K$-algebra
$*$-homomorphism of $L_K(E_S)$ to $CL_K(E,S)$.   

Note that $\phi$ is onto since the vertices, edges and ghost edges of $E$ are in
the image of $\phi.$ Indeed, $v=\phi(v+v')$ for $v\in R(E)\setminus S$ 
and $v=\phi(v)$ for a vertex $v\notin R(E)\setminus S.$ 
Also, if $e\in E^1,$ $e=\phi(e+e')$ for $\ra(e)\in R(E)\setminus
S$ and $e=\phi(e)$ otherwise. From this it follows that the ghost
edges of $E^1$ are in the image of $\phi$ as well.  

Since $\phi$ preserves the grading on the vertices, edges and ghost edges,
$\phi$ is
a graded homomorphism by construction. Thus $\phi$ is a monomorphism by the
Graded Uniqueness Theorem (\cite[Theorem 4.8]{Tomforde}). Note that $E$ is
assumed to be countable in \cite{Tomforde} but the proof of \cite[Theorem
4.8]{Tomforde} does not use this fact.
\end{proof}

Using Lemma \ref{star_iso} and \cite[Proposition
2.4]{GonzaloRangaLia}, we note that the following
conditions are equivalent. \cite[Proposition
2.4]{GonzaloRangaLia} states that the three conditions, analogous to the three
conditions below but formulated for Leavitt path algebras, are equivalent. 
\begin{enumerate} 
\item The involution on $K$ is positive definite.
\item The involution on $CL_{K}(E, S)$ is positive definite for every graph $E$
and $S\subseteq R(E)$.
\item The involution on $CL_{K}(E, S)$ is positive definite for some graph $E$
and $S\subseteq R(E)$.
\end{enumerate}

Using the implication (1) $\Rightarrow$ (2), we show the Cohn-Leavitt
version of Proposition \ref{positively_faithful_on_LPA}. 
\begin{corollary}
If $R$ is a positive definite, involutive
$K$-algebra and $t:CL_K(E,S)\to R$ is a faithful, $K$-linear
map, then $K$ and $CL_K(E,S)$ are positive definite.  
\end{corollary}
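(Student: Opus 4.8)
The plan is to mirror the proof of Proposition \ref{positively_faithful_on_LPA}, replacing $L_K(E)$ by $CL_K(E,S)$ and invoking Lemma \ref{star_iso} at the one place where the internal structure of the algebra is needed. First I would note that a faithful additive map is in particular positive, so, since $R$ is positive definite, Lemma \ref{positively_faithful} applies and tells us that $t$ satisfies condition (2) of that lemma: whenever $x\geq 0$, $y\geq 0$ and $t(x+y)=0$ for $x,y\in CL_K(E,S)$, then $x=y=0$. Iterating this, the same conclusion holds for any finite sum of positive elements that $t$ sends to $0$.

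Next I would check that $K$ itself is proper: if $a\in K$ is nonzero, then $a^*\neq 0$ because the involution is injective, and hence $aa^*\neq 0$ since $K$, being a field, has no zero divisors. Now suppose $\sum_{i=1}^n a_ia_i^*=0$ for some $a_1,\dots,a_n\in K$, and fix any vertex $v$ of $E$ (the graph may be assumed nonempty, as otherwise $CL_K(E,S)=0$ and the statement is vacuous). Multiplying the relation by $v$ gives $0=\sum_{i=1}^n a_ia_i^*v=\sum_{i=1}^n (a_iv)(a_iv)^*$, hence $t\bigl(\sum_{i=1}^n (a_iv)(a_iv)^*\bigr)=0$. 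Applying condition (2) of Lemma \ref{positively_faithful} repeatedly, each summand vanishes: $(a_iv)(a_iv)^*=a_ia_i^*v=0$ for every $i$. If some $a_ia_i^*$ were nonzero in $K$, this would force $v=0$; but $v\neq 0$ in $CL_K(E,S)$, because by Lemma \ref{star_iso} the map $\phi$ is a $*$-isomorphism $L_K(E_S)\cong CL_K(E,S)$ carrying the nonzero vertex-idempotents of $L_K(E_S)$ onto $v$ (explicitly $v=\phi(v)$ or $v=\phi(v+v')$). Therefore $a_ia_i^*=0$ for all $i$, and then $a_i=0$ by properness of $K$. Thus the involution on $K$ is positive definite.

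Finally, having established that the involution on $K$ is positive definite, the implication (1)$\Rightarrow$(2) in the list of equivalent conditions recorded just before this corollary — which itself relies on Lemma \ref{star_iso} together with \cite[Proposition 2.4]{GonzaloRangaLia} — gives that the involution on $CL_K(E,S)$ is positive definite as well. I do not anticipate a genuine obstacle: the argument is a routine transcription of the Leavitt path algebra case, and the only step requiring attention is the nonvanishing of vertices in $CL_K(E,S)$, which is supplied by the $*$-isomorphism $CL_K(E,S)\cong L_K(E_S)$ of Lemma \ref{star_iso}.
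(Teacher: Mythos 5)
Your argument is correct, but it takes a slightly different route from the paper's. The paper's proof is a two-line reduction: it composes $t$ with the $*$-isomorphism $\phi:L_K(E_S)\cong CL_K(E,S)$ of Lemma \ref{star_iso} and observes that $t\circ\phi$ satisfies the hypotheses of Proposition \ref{positively_faithful_on_LPA}, so that result applies verbatim to give positive definiteness of $K$; the conclusion for $CL_K(E,S)$ then follows from the implication (1) $\Rightarrow$ (2) recorded just before the corollary, exactly as you say. You instead re-run the proof of Proposition \ref{positively_faithful_on_LPA} internally in $CL_K(E,S)$, invoking Lemma \ref{star_iso} only at the single point where the structure of the algebra matters, namely to see that vertices are nonzero (for $v\in R(E)\setminus S$ one has $v=\phi(v+v')$ with $v+v'$ a nonzero sum of distinct vertex idempotents of $L_K(E_S)$, and $\phi$ is injective, so indeed $v\neq 0$; alternatively this follows from the basis of $CL_K(E,S)$ cited in Proposition \ref{complete_subgraphs}). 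Both arguments are sound and rest on the same ingredients — Lemma \ref{positively_faithful}, properness of the field, nonvanishing of vertices, and \cite[Proposition 2.4]{GonzaloRangaLia} via the equivalences preceding the corollary. The paper's version is shorter because it reuses the Leavitt path algebra proposition wholesale; yours is longer but makes explicit that the only genuinely algebra-dependent fact needed is that vertices do not vanish, which is a reasonable trade-off.
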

\begin{proof}
If $\phi$ is the isomorphism from Lemma \ref{star_iso}, then the
composition $t\circ\phi$ satisfies the assumption of Proposition
\ref{positively_faithful_on_LPA} so $K$ is positive definite by Proposition
\ref{positively_faithful_on_LPA}. Then  
$CL_K(E,S)$ is positive definite by Proposition
\ref{positively_faithful_on_LPA} and the implication (1) $\Rightarrow$ (2) 
above.
\end{proof}
 
Continuing on towards proving the main result of this section,
we note
that any Cohn-Leavitt algebra is a ring with local units (to see
that use the same arguments as before when considering Leavitt path
algebras). Thus, the definitions of directly finite and finite locally
unital rings
apply to Cohn-Leavitt path algebras as well. In addition to forcing the
underling graph $E$ to be no-exit, the direct finiteness of
a Cohn-Leavitt path algebra $CL_K(E,S)$ also forces the vertices of all cycles
of $E$ to be in $S.$ Using Lemma \ref{star_iso} and \cite[Theorem
3.3]{GonzaloLia}, the next result shows that these conditions
are also sufficient for direct finiteness if $E$ is finite.  

\begin{proposition}
If $CL_K(E,S)$ is (directly) finite, then the following two
conditions hold. 
\begin{enumerate}
\item $E$ is no-exit. 

\item If a vertex is in a cycle, then it is in $S$.
\end{enumerate}

If $E$ is a finite graph, conditions (1) and (2) imply that $CL_K(E,S)$
is directly finite. 
\label{CL_finite_implies_no-exit}
\end{proposition}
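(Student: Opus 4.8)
The first two implications (direct finiteness forces (1) and (2)) should be straightforward adaptations of the arguments already in the excerpt. For (1), I would essentially repeat the proof of Proposition \ref{finite_implies_no-exit}, but now taking care that the local unit $u$ I build lies in $CL_K(E,S)$ (which it does, being a $K$-combination of vertices) and noting that the computation $x^*x=p^*p+(1-\delta_{v,w})w=u$ uses only axioms (V), (E1), (E2), (CK1)—so it is valid in any Cohn-Leavitt algebra regardless of whether $\so(p)\in S$. For (2), if $v$ lies on a cycle $p$ with $\so(p)=\ra(p)=v$, then in a no-exit graph $\so^{-1}(v)$ consists of the single edge $e_1$ starting $p$; setting $x=p$, $u=v$ gives $xx^*=pp^*$ and $x^*x=v=u$, so finiteness yields $pp^*=v$. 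If $v\notin S$, then (CK2) is not imposed at $v$, and one must check $pp^*\neq v$; I would do this via the graded structure (the element $v-pp^*$ is nonzero in degree $0$ of the Cohn-Leavitt algebra when $v\notin S$, using the explicit $K$-basis of $CL_K(E,S)$, or equivalently by pushing forward along the surjection $CL_K(E,S)\to L_K(E)$ and observing the image still does not make $v=pp^*$ unless $v\in S$—more cleanly, compare with $C_K(E)$ first and then quotient). This contradiction gives $v\in S$.

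The substantive direction is the converse for finite $E$. Here the plan is to reduce to the Leavitt-path-algebra case already settled in \cite[Theorem 3.3]{GonzaloLia} via the isomorphism $\phi:L_K(E_S)\cong CL_K(E,S)$ of Lemma \ref{star_iso}. So it suffices to show that $L_K(E_S)$ is directly finite, and by \cite[Theorem 3.3]{GonzaloLia} (applied to the graph $E_S$) this holds precisely when $E_S$ is no-exit. Thus the heart of the matter is a purely graph-theoretic claim: \emph{if $E$ is finite, $E$ is no-exit, and every vertex lying on a cycle belongs to $S$, then $E_S$ is no-exit.}

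To prove that graph claim I would argue as follows. Recall $E_S^0=E^0\cup\{v'\mid v\in R(E)\setminus S\}$ and $E_S^1=E^1\cup\{e'\mid e\in E^1,\ \ra(e)\in R(E)\setminus S\}$, with $\so(e')=\so(e)$ and $\ra(e')=\ra(e)'$. The primed vertices $v'$ are sinks in $E_S$ (no edge of $E_S$ has source $v'$), so no cycle of $E_S$ passes through a primed vertex; hence every cycle of $E_S$ lies entirely in $E^0$ and, since primed edges land in primed vertices, uses only edges of $E^1$. Therefore the cycles of $E_S$ are exactly the cycles of $E$. Now let $c$ be a cycle of $E_S$ and $v$ a vertex on $c$; I must show $\so_{E_S}^{-1}(v)$ is a singleton. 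Since $v$ lies on a cycle of $E$, hypothesis (2) gives $v\in S$, so in particular $v\in R(E)\setminus S$ is false, meaning no primed edge $e'$ has $\ra(e)=v$—but that is irrelevant; what matters is the edges \emph{out} of $v$. By construction $\so_{E_S}^{-1}(v)=\so_E^{-1}(v)\cup\{e'\mid \so_E(e)=v,\ \ra_E(e)\in R(E)\setminus S\}$. Since $E$ is no-exit and $v$ is on a cycle, $\so_E^{-1}(v)=\{e_0\}$ is a singleton, where $e_0$ is the unique edge of the cycle leaving $v$; and $\ra_E(e_0)$ again lies on that cycle, hence $\ra_E(e_0)\in S$ by (2), so $e_0'$ is \emph{not} an edge of $E_S$. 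Thus $\so_{E_S}^{-1}(v)=\{e_0\}$, a singleton, and $E_S$ is no-exit.

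Finally I would assemble the pieces: $E_S$ is finite (as $E$ is finite, $R(E)\setminus S$ is finite, so only finitely many primed vertices and edges are added) and no-exit, so $L_K(E_S)$ is directly finite by \cite[Theorem 3.3]{GonzaloLia}; transporting along the $K$-algebra isomorphism $\phi$ of Lemma \ref{star_iso} (direct finiteness in the locally-unital sense is clearly preserved by ring isomorphisms), $CL_K(E,S)$ is directly finite. The main obstacle is the graph-theoretic lemma that $E_S$ is no-exit—specifically the observation that hypothesis (2) is exactly what kills the primed edges $e_0'$ that would otherwise give the unique cycle-edge out of $v$ a companion exit; without (2) the argument breaks, which is consistent with the necessity direction.
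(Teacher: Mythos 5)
Your proposal is correct and, for part (1) and for the converse direction, follows the same route as the paper: part (1) is the proof of Proposition \ref{finite_implies_no-exit} verbatim (which never invokes (CK2)), and the converse reduces to $L_K(E_S)$ via Lemma \ref{star_iso} and the finite no-exit case of \cite[Theorem 3.3]{GonzaloLia}. The paper dispatches the key graph-theoretic step there with the phrase ``$E_S$ is no-exit by construction''; your explicit verification --- primed vertices are sinks, so cycles of $E_S$ are cycles of $E$, and for $v$ on a cycle the unique edge $e_0\in\so_E^{-1}(v)$ acquires no primed companion $e_0'$ because $\ra_E(e_0)$ also lies on the cycle and hence in $S$ by (2) --- is exactly the missing detail and is correct.

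Where you genuinely diverge is part (2). The paper again transfers to $L_K(E_S)$: if $v\notin S$ lies on a cycle, then $E_S$ contains a cycle with an exit (the primed edge attached to $v'$), so $L_K(E_S)$, hence $CL_K(E,S)$, is not finite by Proposition \ref{finite_implies_no-exit}. You instead argue intrinsically in $CL_K(E,S)$: finiteness applied to $x=p^*$ forces $pp^*=v$, and one must then show $pp^*\neq v$ when $v\notin S$. That inequality is true, but it is the one nontrivial point of your route and your justification is only sketched; moreover, one of your two suggested methods --- ``compare with $C_K(E)$ first and then quotient'' --- does not work as stated, since an element being nonzero in $C_K(E)$ says nothing about its image in the quotient $CL_K(E,S)$. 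The basis of $CL_K(E,S)$ from \cite[Propositions 2.7 and 3.6]{Ara_Goodearl} does settle it (since $v\notin S$, the element $pp^*$ reduces to a basis monomial of the form $e_1\cdots e_ke_k^*\cdots e_1^*$ with $k\geq 1$, distinct from the basis element $v$), so the gap is fillable; but note that the paper's detour through $E_S$ buys you precisely this nonvanishing for free, since it converts the question into the already-established statement that a Leavitt path algebra of a graph with an exited cycle is not finite.
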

\begin{proof}
The proof of Proposition \ref{finite_implies_no-exit} demonstrates part
(1) since it does not use axiom (CK2). 

To show part (2), assume that a vertex $v$ is in a cycle. Then $v$
is not a sink
nor it is an infinite emitter since $E$ is no-exit. Thus $v$ is regular. 
If $v$ is not in $S$, then $v$ is in a cycle with an exit in the
graph $E_S$ by construction of $E_S.$ Thus, $L_K(E_S)$ is not
finite by Proposition \ref{finite_implies_no-exit}. Since $CL_K(E,S)$ is
$*$-isomorphic to $L_K(E_S)$ by Lemma \ref{star_iso}, $CL_K(E,S)$ is not  
finite as well. This contradicts the assumption so $v$ is in $S.$

Assume now that $E$ is a finite graph satisfying (1) and (2). Conditions (1) and
(2) imply that the graph $E_S$ is no-exit by construction of $E_S$. Thus,
$L_K(E_S)$ is directly finite by \cite[Theorem 3.3]{GonzaloLia} and so 
$CL_K(E,S)\cong L_K(E_S)$ is directly finite as well. 
\end{proof}

The last ingredient needed for our proof of Theorem \ref{no-exit} is the
construction from \cite[Definition 3.4, Propositions 3.5 and 3.6]{Ara_Goodearl}.
In \cite{Ara_Goodearl}, the authors consider Cohn-Leavitt algebras of separated
graphs. Since we consider non-separated (i.e. trivially separated)
graphs, we present \cite[Definition 3.4, Propositions 3.5 and 3.6]{Ara_Goodearl}
below assuming the trivial partition $\{\so^{-1}(v)\}$ for every $v$. With
this restriction, \cite[Definition 3.4]{Ara_Goodearl} can
be stated as follows. 

Let $E$ be a graph with $S\subseteq R(E)$ and $F$ a subgraph of $E$ with
$T\subseteq R(F).$ We say that $(F, T)$ is a {\em complete subobject} of
$(E,S)$ if $T\subseteq S$ and the following holds.
\begin{enumerate}
\item[(C)] If $v\in
S\cap F^0$ with $\so^{-1}_E(v)\cap F^1\neq \emptyset$ then
$\so^{-1}_F(v)=\so^{-1}_E(v)$ and $v\in T.$
\end{enumerate}

If $T=R(F)$ and $S=R(E),$ this agrees with the definitions of a
complete subgraph for row-finite graphs from \cite[Section
3]{Ara_Moreno_Pardo} and for countable graphs from 
\cite[Definition
9.7]{Abrams_Tomforde}.
Note that conditions $T\subseteq S$ and (C) imply that 
$T=S\cap\{v\in F^0 | \so^{-1}_E(v)\cap F^1\neq \emptyset\}.$
Indeed, every vertex $v\in T$ is necessarily in $S$ and it emits (finitely many) edges in $F$ so that  $\so^{-1}_E(v)\cap F^1\neq
\emptyset$. The converse holds by (C).

\begin{proposition}
If $G$ is a finite subgraph of a graph $E$ and $S\subseteq R(E),$ there is a
complete subobject
$(F,T)$ of $(E,S)$ such that $F$ is finite, $G$ is a subgraph of $F$ and
$CL_K(F,T)$ is a $K$-subalgebra of $CL_K(E,S).$
\label{complete_subgraphs} 
\end{proposition}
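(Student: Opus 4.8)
The plan is to enlarge $G$ in a single step to a finite subgraph $F$ by adjoining, for every vertex of $G$ that emits an edge of $G$ and lies in $S$, all of the (finitely many) edges it emits in $E$ together with their ranges; then to take $T$ to be the only admissible set of breaking/regular vertices; and finally to quote \cite{Ara_Goodearl} for the embedding of the associated Cohn--Leavitt algebras.

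Concretely, I would first set $P=\{v\in S\cap G^0 : \so_E^{-1}(v)\cap G^1\neq\emptyset\}$. Since $G^0$ is finite, $P$ is finite, and since $P\subseteq S\subseteq R(E)$, every $v\in P$ is a regular vertex of $E$, so $\so_E^{-1}(v)$ is finite. I then put $F^1=G^1\cup\bigcup_{v\in P}\so_E^{-1}(v)$, which is finite, and $F^0=G^0\cup\{\ra_E(e) : e\in F^1\}$, which is finite as well (the sources of the newly adjoined edges already lie in $P\subseteq G^0$, so no new source vertices appear). Equipped with the source and range maps inherited from $E$, this defines a subgraph $F$ of $E$ with $G$ a subgraph of $F$; it is finite because a graph with finitely many edges is automatically row-finite. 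Finally I would set $T=S\cap\{v\in F^0 : \so_F^{-1}(v)\neq\emptyset\}$, which, as noted in the remark preceding the proposition, is the forced choice; it satisfies $T\subseteq S$, and $T\subseteq R(F)$ because each vertex of $T$ emits at least one and, $F^1$ being finite, only finitely many edges of $F$.

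Next I would verify condition (C), i.e. that $(F,T)$ is a complete subobject of $(E,S)$. Suppose $v\in S\cap F^0$ with $\so_E^{-1}(v)\cap F^1\neq\emptyset$; then $v\in T$ by the definition of $T$, so it remains to check $\so_F^{-1}(v)=\so_E^{-1}(v)$. Pick any $e\in\so_E^{-1}(v)\cap F^1$: if $e\in G^1$ then $v\in S\cap G^0$ emits an edge of $G$, so $v\in P$; if $e\in F^1\setminus G^1$ then $e\in\so_E^{-1}(v')$ for some $v'\in P$, and $\so_E(e)=v=v'$, so again $v\in P$. In either case $v\in P$, hence $\so_E^{-1}(v)\subseteq F^1$ and $\so_F^{-1}(v)=\so_E^{-1}(v)$. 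Thus $(F,T)$ is a complete subobject of $(E,S)$, and the remaining assertion that $CL_K(F,T)$ is a $K$-subalgebra of $CL_K(E,S)$ follows directly from \cite[Propositions 3.5 and 3.6]{Ara_Goodearl} applied to this complete subobject.

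The only real subtlety I expect is the point that this one-step enlargement already produces a complete subobject rather than requiring iteration: the vertices newly adjoined to $F^0$ are ranges of edges and emit no edge of $F^1$, so (C) is vacuous for them, while every vertex of $G$ lying in $S$ that emits an edge of $F^1$ was already forced into $P$ and hence had all of its $E$-edges adjoined. I would also keep an eye on the hypothesis $S\subseteq R(E)$: it is exactly what keeps the set of adjoined edges finite, and it is why infinite emitters of $E$ outside $S$ cause no problem, since (C) never applies to them.
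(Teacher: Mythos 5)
Your proposal is correct and follows essentially the same route as the paper: the same one-step enlargement (your set $P$ and $F^1=G^1\cup\bigcup_{v\in P}\so_E^{-1}(v)$ coincide with the paper's definition of $F$, and your $T$ is the same forced choice), the same verification of condition (C), and the same appeal to \cite{Ara_Goodearl} for the embedding of $CL_K(F,T)$ into $CL_K(E,S)$.
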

\begin{proof}
Following \cite[Proposition 3.5]{Ara_Goodearl}, we define the graph $F$ and the
set $T\subseteq R(F)$ as follows.
\[
\begin{array}{ll}
F^0=G^0\cup \{\;\ra_E(e)\;\; |\, e\in E^1, &\so_E(e)\in G^0\cap S \mbox{ and
}\so^{-1}_E(\so_E(e))\cap G^1\neq\emptyset\},\\
F^1 = G^1\cup\{\; e\in E^1 | &\so_E(e)\in G^0\cap S \mbox{ and
}\so^{-1}_E(\so_E(e))\cap G^1\neq \emptyset\},\mbox{ and}
\end{array}
\]
\[T=S\cap\{v\in F^0 | \so^{-1}_E(v)\cap F^1\neq \emptyset\}.\]

With these definitions, $F$ is finite, $T\subseteq S,$ and if $v\in
S\cap F^0$ with $\so^{-1}_E(v)\cap F^1\neq \emptyset,$ then $v\in T$ and
$\so^{-1}_E(v)\cap G^1\neq \emptyset$ so $\so^{-1}_F(v)=\so^{-1}_E(v)$ by
definition of $F^1.$ Thus condition (C) holds.
It is straightforward to see that axioms (V), (E1), (E2) and (CK1) are
compatible in $CL_K(E,S)$ and $CL_K(F,T).$ If $v\in T,$ then
$\so^{-1}_F(v)=\so^{-1}_E(v)$ is finite. Thus, $\sum_{e\in
\so^{-1}_F(v)}ee^*=\sum_{e\in
\so^{-1}_E(v)}ee^*=v$ so (SCK2) is compatible as well.
The inclusion of $(F,T)$ into $(E,S)$ induces the inclusion of the basis
of $CL_K(F,T)$ into the basis of $CL_K(E,S)$ described in \cite[Propositions
2.7 and 3.6]{Ara_Goodearl}. This induces an embedding of $CL_K(F,T)$ into
$CL_K(E,S).$
\end{proof}

In case when $S=R(E),$ $E$ has an infinite emitter $v$ and $G$ is a subgraph
consisting of $v$ with finitely many edges $v$ emits together their ranges,
the complete subobject $(F,T)$ from Proposition \ref{complete_subgraphs} is such
that $v\in R(F)$ but $v\notin T.$ Thus
$CL_K(F,T)$ is a $K$-subalgebra of $L_K(E)$ while $L_K(F)$ is not. Cases like
this one make the consideration of Cohn-Leavitt algebras necessary in the proof 
of our next result, the main result of this section. 

\begin{theorem}
The following conditions are equivalent. 
\begin{enumerate}
\item $L_K(E)$ is directly finite. 

\item $L_K(E)$ is finite.

\item $E$ is no-exit.   
\end{enumerate}
\label{no-exit}
\end{theorem}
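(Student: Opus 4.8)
The plan is to prove the cycle of implications $(1)\Rightarrow(2)\Rightarrow(3)\Rightarrow(1)$. The implication $(1)\Rightarrow(2)$ is essentially immediate: direct finiteness (in the locally unital sense) implies finiteness, since if $xx^*=u$ for a projection $u$ with $xu=ux=x$, then applying $x$ and $x^*$ in the roles of the two elements in the definition of direct finiteness gives $x^*x=u$. The implication $(2)\Rightarrow(3)$ is exactly Proposition~\ref{finite_implies_no-exit}, already proved in the excerpt. So the entire content of the theorem is the implication $(3)\Rightarrow(1)$, which is where I would concentrate.

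For $(3)\Rightarrow(1)$, assume $E$ is no-exit and let $x,y\in L_K(E)$ and an idempotent $u$ with $xu=ux=x$, $yu=uy=y$, and $xy=u$; I must show $yx=u$. First I would write $x$, $y$, and $u$ as $K$-linear combinations of elements $pq^*\in G_E$, collect the finitely many paths occurring in these representations, and let $G$ be the finite subgraph of $E$ spanned by all vertices and edges appearing in those paths. By Proposition~\ref{complete_subgraphs} (applied with $S=R(E)$), there is a complete subobject $(F,T)$ of $(E,R(E))$ with $F$ finite, $G$ a subgraph of $F$, and $CL_K(F,T)$ a $K$-subalgebra of $L_K(E)=CL_K(E,R(E))$. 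Since all of $x$, $y$, $u$ lie in $CL_K(F,T)$ and this is a subalgebra, the relations $xu=ux=x$, $yu=uy=y$, $xy=u$ all hold already inside $CL_K(F,T)$. Now I invoke Proposition~\ref{CL_finite_implies_no-exit}: I must check that $(F,T)$ satisfies conditions (1) and (2) of that proposition, i.e. that $F$ is no-exit and that every vertex lying on a cycle of $F$ is in $T$. That $F$ is no-exit follows because $F$ is a subgraph of the no-exit graph $E$ and, by condition (C) of completeness, whenever a vertex $v$ on a cycle of $F$ emits an edge in $F$ it emits \emph{all} its $E$-edges in $F$ — but since $E$ is no-exit, $v$ emits only one edge in $E$, hence only one in $F$, so no exit is created. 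For condition (2): if $v$ lies on a cycle of $F$, then $v$ lies on a cycle of $E$, so $v$ is regular in $E$ (it is neither a sink nor an infinite emitter, as $E$ is no-exit); moreover $\so^{-1}_E(v)\cap F^1\neq\emptyset$ because the (unique) edge of the cycle at $v$ lies in $F$; hence by condition (C), $v\in T$. Therefore, by Proposition~\ref{CL_finite_implies_no-exit}, $CL_K(F,T)$ is directly finite, so $yx=u$ holds in $CL_K(F,T)$ and thus in $L_K(E)$.

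The main obstacle — or at least the step requiring the most care — is the localization argument: verifying that the finite complete subobject $(F,T)$ produced by Proposition~\ref{complete_subgraphs} genuinely satisfies hypotheses (1) and (2) of Proposition~\ref{CL_finite_implies_no-exit}, and in particular that passing from $E$ to the subgraph $F$ via the completion procedure cannot create a cycle with an exit. This is where condition (C) of the definition of complete subobject interacts crucially with the no-exit hypothesis on $E$: it is exactly the statement ``if $v\in S\cap F^0$ emits some $F$-edge then $\so^{-1}_F(v)=\so^{-1}_E(v)$ and $v\in T$'' that simultaneously prevents exits from appearing and forces cycle vertices into $T$. Once this verification is in hand, the rest is bookkeeping: everything relevant to $x$, $y$, $u$ happens inside a finite-graph Cohn-Leavitt algebra, where the already-established Proposition~\ref{CL_finite_implies_no-exit} (which itself rests on Lemma~\ref{star_iso} and \cite[Theorem 3.3]{GonzaloLia}) closes the argument. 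I would also remark at the end that, combined with Proposition~\ref{faithful_implies_finite}, this shows admitting a faithful trace implies the no-exit condition, though that is a corollary rather than part of the proof.
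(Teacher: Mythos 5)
Your proposal is correct and follows essentially the same route as the paper's proof: the trivial implications $(1)\Rightarrow(2)$ and $(2)\Rightarrow(3)$ via Proposition \ref{finite_implies_no-exit}, and then the localization of $x,y,u$ into a finite complete subobject $(F,T)$ via Proposition \ref{complete_subgraphs}, verification that $F$ is no-exit with all cycle vertices in $T$, and an appeal to Proposition \ref{CL_finite_implies_no-exit}. Your extra care in checking the hypotheses of that proposition (using condition (C) together with the no-exit hypothesis on $E$) is exactly the content of the corresponding step in the paper's argument.
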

\begin{proof}
(1) trivially implies (2). (2) implies (3) by Proposition
\ref{finite_implies_no-exit}. 

To show that (3) implies (1), assume that $x,y\in L_K(E)$ are such
that $xu=ux=x,$ $yu=uy=y$ and $xy=u$ for some idempotent $u$ of $L_K(E).$
If $x,$ $y,$ and $u$  are finite $K$-linear combinations of elements of the form
$p_iq_i^*,$ $i=1,\ldots, n$ for
some paths $p_i$ and $q_i,$ let $G$ be the subgraph of $E$ such that $G^0$ is
the set of all vertices
appearing in paths $p_i$ and $q_i,$ $i=1,\ldots, n,$ and $G^1$ is the
set, possibly empty, of all edges of paths $p_i$ and $q_i$. Let $(F, T)$ be
the complete subobject of $(E, R(E))$ generated by $G$ from Proposition
\ref{complete_subgraphs}. The graph $F$ is no-exit
since $E$ is no-exit. If a vertex $v$ is in a cycle of $F$ then it
emits a single edge both in $F$ and in $E.$  
Thus $v$ is regular and $\so^{-1}_E(v)\cap F^1\neq
\emptyset$ so $v$ is in $T$ by definition of $F$ and $T$. This enables us to use
Proposition \ref{CL_finite_implies_no-exit} and to conclude that $CL_K(F,T)$ is
directly finite. 
By construction $x,$ $y$ and $u$ are elements of $CL_K(F,T)$ and so $u^2=u,$  
$xu=ux=x,$ $yu=uy=y$ and $xy=u$ are relations in $CL_K(F,T)$ as well. Since
$CL_K(F,T)$ is directly finite, these relations imply $yx=u$. The relation
$yx=u$ then holds in $L_K(E)$ as well. So $L_K(E)$ is directly finite too. 
\end{proof}

Theorem \ref{no-exit} has the following corollary.  

\begin{corollary} 
The following conditions are equivalent. 
\begin{enumerate}
\item $CL_K(E,S)$ is directly finite. 

\item $CL_K(E,S)$ is finite.

\item $E$ is no-exit and vertices of every cycle are in $S$.  
\end{enumerate}

In particular, a Cohn path algebra $C_K(E)$ is (directly) finite if and only
if $E$ is acyclic.
\label{CL_no-exit}
\end{corollary}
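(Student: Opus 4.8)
The plan is to reduce the whole statement to Theorem~\ref{no-exit} applied to the auxiliary graph $E_S$, using the $*$-isomorphism $\phi\colon L_K(E_S)\cong CL_K(E,S)$ of Lemma~\ref{star_iso}. Direct finiteness is a purely ring-theoretic property of a ring with local units and finiteness is a purely $*$-ring-theoretic one, and $\phi$ is a $*$-isomorphism; so $\phi$ transports both properties in both directions, and conditions (1) and (2) for $CL_K(E,S)$ become the corresponding conditions for $L_K(E_S)$. By Theorem~\ref{no-exit} those two conditions for $L_K(E_S)$ are equivalent to one another and to ``$E_S$ is no-exit''. (Note that this $E_S$-route is what lets us drop the finiteness-of-$E$ hypothesis present in Proposition~\ref{CL_finite_implies_no-exit}.) Hence the task reduces to checking the purely graph-theoretic equivalence
\[
E_S\text{ is no-exit}\iff E\text{ is no-exit and every vertex on a cycle of }E\text{ lies in }S.
\]

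The first thing I would do is describe the cycles of $E_S$. By construction each added vertex $v'$ (one for every $v\in R(E)\setminus S$) is a sink of $E_S$, and each added edge $e'$ has range $\ra(e)'$, i.e.\ one of those sinks; so neither an added vertex nor an added edge can occur in a closed path of $E_S$. Therefore the closed paths, and in particular the cycles, of $E_S$ are exactly those of $E$. The second step is the exit analysis: fix a cycle $c$ of $E$, a vertex $v$ on it, and the edge $e_0\in\so_E^{-1}(v)$ that $c$ uses. Since $\so_{E_S}^{-1}(v)=\so_E^{-1}(v)\cup\{e'\mid e\in\so_E^{-1}(v),\ \ra(e)\in R(E)\setminus S\}$ always contains $e_0$, the graph $E_S$ has no exit at $v$ if and only if this set equals $\{e_0\}$, i.e.\ if and only if $\so_E^{-1}(v)=\{e_0\}$ \emph{and} $\ra(e_0)\notin R(E)\setminus S$. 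Running this over all such $v$ and $c$: the first clause for every cycle vertex is precisely ``$E$ is no-exit'', which then forces every cycle vertex (in particular every $\ra(e_0)$) to be regular, so the second clause becomes $\ra(e_0)\in S$; and as $v$ ranges over the vertices of $c$, the vertex $\ra(e_0)$ ranges over all of them. This yields the displayed equivalence, and with it the equivalence of (1), (2), (3).

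For the final sentence I would specialize to $S=\emptyset$, where $CL_K(E,\emptyset)=C_K(E)$: condition (3) then says $E$ is no-exit and every vertex on a cycle lies in $\emptyset$, which holds precisely when $E$ has no cycles at all, i.e.\ $E$ is acyclic.

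I do not expect a genuine obstacle here: the only non-bookkeeping content is the combinatorial description of $E_S$ in the second paragraph, and even that is routine once one observes that all the added vertices are sinks. The point to be careful about is the direction ``$E_S$ no-exit $\Rightarrow$ every cycle vertex of $E$ lies in $S$'': one must first extract ``$E$ is no-exit'' from the single-edge condition in order to know the relevant ranges $\ra(e_0)$ are regular, and only then conclude from $\ra(e_0)\notin R(E)\setminus S$ that $\ra(e_0)\in S$.
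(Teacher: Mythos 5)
Your proposal is correct, and its core reduction coincides with the paper's: both prove (3) $\Rightarrow$ (1) by observing that condition (3) makes $E_S$ no-exit and then transporting direct finiteness of $L_K(E_S)$ (Theorem~\ref{no-exit}) back through the $*$-isomorphism of Lemma~\ref{star_iso}. Where you diverge is in (2) $\Rightarrow$ (3): the paper cites Proposition~\ref{CL_finite_implies_no-exit}, whose part (1) reruns the explicit element computation of Proposition~\ref{finite_implies_no-exit} (the element $x=p+(1-\delta_{v,w})w$ built from a cycle with an exit) inside $CL_K(E,S)$, noting that the computation never uses (CK2), and whose part (2) uses the $E_S$ construction only to handle cycle vertices outside $S$. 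You instead route the entire corollary through $E_S$, replacing the algebraic computation by the purely combinatorial equivalence ``$E_S$ is no-exit $\iff$ $E$ is no-exit and every cycle vertex lies in $S$,'' which you verify carefully (added vertices are sinks, added edges terminate in sinks, so the cycles of $E_S$ are exactly those of $E$, and $\so_{E_S}^{-1}(v)$ is a singleton at a cycle vertex $v$ iff $\so_E^{-1}(v)=\{e_0\}$ and $\ra(e_0)\notin R(E)\setminus S$). Your version is more uniform and self-contained at the level of the corollary --- a single graph-theoretic lemma plus Theorem~\ref{no-exit} yields all three equivalences at once --- at the cost of not reusing the already-established Proposition~\ref{CL_finite_implies_no-exit}; you are also right to flag the one delicate point, namely that one must first extract ``$E$ is no-exit'' before concluding that the ranges $\ra(e_0)$ are regular and hence that $\ra(e_0)\notin R(E)\setminus S$ forces $\ra(e_0)\in S$. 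The specialization $S=\emptyset$ for the final sentence matches the paper exactly.
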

\begin{proof}
(1) trivially implies (2) and (2) implies (3) by Proposition
\ref{CL_finite_implies_no-exit}. To show that (3) implies (1), note that (3)
implies that the graph $E_S$ is no-exit. Thus $L_K(E_S)$ is
directly finite by Theorem \ref{no-exit} and then so is $CL_K(E,S)\cong
L_K(E_S).$ 

The equivalence of conditions (1), (2) and (3) with
$S=\emptyset$ shows the last sentence.  
\end{proof}

In the final part of the paper, we focus on relations between the
following
three conditions. 
\begin{enumerate}
\item $L_K(E)$ is locally noetherian. 

\item $L_K(E)$ admits a faithful trace. 

\item $L_K(E)$ is directly finite. 
\end{enumerate}

Recall that a ring $T$ is locally left (right) noetherian if for every finite
set $F$ of $T$, there is an idempotent $e\in T$ such that $eTe$ contains $F$ and
$eTe$ is left (right) noetherian. By \cite[Theorem 3.7]{AAPM}, a Leavitt path
algebra is locally left noetherian if and only if it is locally right
noetherian and we simply say it is locally noetherian in this case. Recall
also that an infinite path of a graph is a sequence of edges $e_1e_2\ldots$ such
that $\ra(e_i)=\so(e_{i+1})$ for all $i=1,2,\ldots$. An 
infinite path $p$ is an \emph{infinite sink} if it has
no cycles or
exits. An infinite path $p$ \emph{ends in a sink} if there is $n\geq 1$
such that the subpath
$e_ne_{n+1}\hdots$ is an infinite sink, and $p$
\emph{ends in a cycle} if there is $n\geq 1$ and a
cycle $c$ of positive length such that the subpath
$e_ne_{n+1}\hdots$ is equal to the path $cc\hdots$.  
\cite[Theorem 3.7]{AAPM} asserts that the following conditions are equivalent for
every graph $E$.
\begin{enumerate}
\item $L_K(E)$ is locally noetherian.

\item $E$ is a no-exit graph such that every infinite path ends either in a
sink or in a cycle. 
\end{enumerate}

Proposition \ref{faithful_implies_finite}, Theorem \ref{no-exit},
and \cite[Theorem 3.7]{AAPM} infer the implications and equivalences in the
diagram below. 

\[
\begin{array}{|c|}\hline
L_K(E)\mbox{ is locally noetherian} 
\;\;\longleftrightarrow \;\;
\begin{array}{c}
E\mbox{ is no-exit, infinite paths end in sinks or cycles}
\end{array}
\\ \hline
\end{array}
\]
\[\begin{array}{ccc}
\hskip6cm &\hskip1cm &\downarrow \\
\end{array}\]
\[\begin{array}{ccc}
\begin{array}{|c|}\hline
L_K(E)\mbox{ admits a faithful trace}\; \\\hline
\end{array} 
\;&\longrightarrow&  \;
\begin{array}{|c|}\hline
\;L_K(E)\mbox{ is directly
finite} \;\;\longleftrightarrow\;\; E\mbox{ is no-exit}\;\;\\\hline
\end{array} \\
\end{array}\]
\smallskip

The next three examples show that both implications are strict and the two
conditions ``$L_K(E)$  is locally noetherian'' and ``$L_K(E)$ admits a faithful
trace'' are independent. 

\begin{example}
\label{direct_finite_not_noetherian_no_trace}
If $E$ is any no-exit graph which has an
infinite path not ending in a sink or a cycle, then $L_K(E)$
is directly
finite and not locally noetherian. For example, the graph $E$ below has this
property.
\[\xymatrix{{\bullet}^{v}
\ar [r]\ar[d] & {\bullet} \ar [r] \ar[dl] &
{\bullet} \ar@{.}[r] \ar[dll] & \ar@{.}[dlll]\\ {\bullet}^{w}
}\]
There is no faithful $\Cset$-valued graph trace on $E$ since there are
infinitely many paths from $v$ to $w$ forcing $\delta(w)$ to be zero for any
positive graph trace $\delta$ on $E$. This demonstrates
that a directly finite Leavitt path algebra may not admit a faithful trace.
\end{example}

\begin{example} 
\label{noetherian_no_trace}
Let $E$ be the graph with two
vertices $v$ and $w$ and infinitely many edges from $v$ to $w$ as
represented below. The Leavitt path algebra of
this graph over any field is locally noetherian since $E$ satisfies the
required graph-theoretic condition. However, there
is no faithful trace on $L_{\Cset}(E)$ (nor $L_{\Rset}(E)$ as noted also
in \cite[Example 35]{Zak_paper}) since there are infinitely many paths from
$v$ to $w.$ 
$$\xymatrix{{\bullet}^{v} \ar@{.} @/_1pc/ [r] _{\mbox{ } } \ar@/_/ [r] \ar [r]
\ar@/^/ [r] \ar@/^1pc/ [r] \ar@{.} @/^20pt/
[r] & {\bullet}^{w}}$$
\end{example}

\begin{example} 
\label{has_trace_not_noetherian}
The graph $E$ represented below is such that there is an infinite
path which does not end in a sink or a cycle so $L_K(E)$ is not locally
noetherian for any field $K$.
\[\xymatrix{
\bullet^{w_1} & \bullet^{w_2} & \bullet^{w_3}  & \dots & \\
\ar@<1ex>[u] \bullet^{v_1} \ar[r] & \ar@<1ex>[u]\bullet^{v_2}  \ar[r]& \ar[r]
\ar@<1ex>[u]\bullet^{v_3}& \ar@{.}\dots
}\]
On the other hand,
there is a faithful, $\Cset$-valued graph trace given
by 
\[\delta(v_n)=\frac{1}{2^{n-1}}\;\;\;
\mbox{ and }\;\;\;\delta(w_n)=\frac{1}{2^n}\;\;\;\mbox{ for }n=1,2,\ldots\]
The graph trace $\delta$ extends to a faithful trace by Theorem
\ref{correpondence}.
\end{example}

It is interesting to note that both conditions ``$L_K(E)$  is locally
noetherian'' and ``$L_K(E)$ is directly finite'' have been characterized by
graph-theoretic conditions. We wonder if a graph-theoretic characterization can
be found for the condition ``$L_K(E)$ admits a faithful trace'' if $K$ is
positive definite. If $E$ is a row-finite graph in which every infinite path
ends in a sink or a cycle, \cite[Theorem 33]{Zak_paper} shows that $L_K(E)$
admits a faithful trace if and only if $E$ is no-exit. However the graph $E$
from Example \ref{has_trace_not_noetherian} does not fall under the class of
graphs covered by \cite[Theorem 33]{Zak_paper} and $L_{\Cset}(E)$ admits a
faithful trace. So, we wonder if a general graph-theoretic characterization is
possible. More precisely, we propose the following.  
\begin{question}
Find a graph-theoretic condition on $E$ which is equivalent to the
condition that the Leavitt path algebra $L_K(E)$ over  a positive
definite field $K$ admits a faithful
trace.  
\label{question1}
\end{question}
A similar question was also raised in \cite{Pask-Rennie} for graph
$C^*$-algebras. \cite[Lemma 3.5 and Corollary 3.7]{Pask-Rennie} show that the
following condition is necessary for a row-finite graph $E$ to admit a
$\Cset$-valued faithful graph trace. 
\begin{enumerate}
\item There are finitely many paths from any vertex to any other vertex, an
infinite sink or a loop.  
\end{enumerate}
If $E$ is row-finite, \cite[Proposition 3.8]{Pask-Rennie} lists the following
condition as sufficient.
\begin{enumerate}
\item[(2)] There is a finite upper bound for the number of paths from
any vertex to any other vertex, an infinite sink or a loop and every infinite
path ends in a loop or a sink. 
\end{enumerate}
Condition (2) is not necessary as the following example shows.

\begin{example}
Let $E$ be the graph with vertices $v$ and $w_n,$ $n=1,2,\ldots,$ and $n$
edges from $w_n$ to $v$ for every $n.$ In the diagram below, the numbers next to
the arrows indicate the number of edges
from vertices $w_n$ to $v$.
$$\xymatrix{ & {\bullet}^{w_1}\ar[d]_{(1)} & {\bullet}^{w_2}\ar[dl]_{(2)}  \\  &
{\bullet}^{v} 
   \ar@{.}[d]
\ar@{}[dl] & {\bullet}^{w_3}\ar[l]_{(3)} \\ &  & {\bullet}^{w_4}\ar[ul]_{(4)}}$$
The graph $E$ is row-finite, it does not satisfy condition (2), and there is a
faithful, $\Cset$-valued graph trace on $E$ given by 
\[\delta(v)=1\;\;\;\mbox{ and
}\;\;\;\delta(w_n)= n\;\mbox{ for }n=1,2,\ldots.\]
In fact, if $K$ is any positive definite field, the conditions above define a
faithful, $K$-valued graph trace on $E$. 
\end{example}
\medskip

\noindent{\bf Acknowledgments.}
The author is grateful to Zachary Mesyan for his valuable suggestions on an early
outline of the manuscript and, in particular, for pointing out a gap in the
proof of Theorem \ref{positive} from that version of the manuscript.

\end{document}